\documentclass[11pt,a4paper]{article}

\vfuzz2pt
\topmargin -0.5truein
\oddsidemargin 0.15truein
\textwidth 6truein
\textheight 9.1truein

\usepackage[T1]{fontenc}
\usepackage[utf8]{inputenc}
\usepackage{amsmath}
\usepackage{amssymb}
\usepackage{amsthm}
\usepackage{placeins}  
\usepackage{authblk}

\usepackage{graphicx}
\usepackage{epstopdf}

\def\am{\operatorname{am}}
\def\Cat{\operatorname{Cat}}
\def\mad{\operatorname{mad}}

\newtheorem{thm}{Theorem}
\newtheorem{lem}[thm]{Lemma}
\newtheorem{conj}[thm]{Conjecture}
\newtheorem{cor}[thm]{Corollary}
\newtheorem{prop}[thm]{Proposition}



\title{On the precise value of the strong chromatic index of a planar graph
       with a large girth}
\author[1,2]{Gerard Jennhwa Chang\thanks{E-mail: gjchang@math.ntu.edu.tw.}}
\author[1]{Guan-Huei Duh\thanks{E-mail: r03221028@ntu.edu.tw.}}
\affil[1]{Department of Mathematics, National Taiwan University, Taipei 10617, Taiwan}
\affil[2]{National Center for Theoretical Sciences, Mathematics Division,
2F of Astronomy-Mathematics Building, National Taiwan University, Taipei 10617, Taiwan}
\date{September 24, 2015} 
\begin{document}

\maketitle

\begin{abstract}
A {\em strong $k$-edge-coloring} of a graph $G$ is a mapping
from $E(G)$ to $\{1,2,\ldots,k\}$ such that every pair of distinct
edges at distance at most two receive different colors.
The {\it strong chromatic index} $\chi'_s(G)$ of a graph $G$
is the minimum $k$ for which $G$ has a strong $k$-edge-coloring.
Denote $\sigma(G)=\max_{xy\in E(G)}\{\deg(x)+\deg(y)-1\}$.
It is easy to see that $\sigma(G) \le \chi'_s(G)$ for any graph $G$,
and the equality holds when $G$ is a tree.
For a planar graph $G$ of maximum degree $\Delta$,
it was proved that $\chi'_s(G) \le 4 \Delta +4$ by using the Four Color Theorem.
The upper bound was then reduced to $4\Delta$, $3\Delta+5$, $3\Delta+1$, $3\Delta$, $2\Delta-1$
under different conditions for $\Delta$ and the girth.
In this paper, we prove that if the girth of a planar graph $G$
is large enough and $\sigma(G)\geq \Delta(G)+2$,
then the strong chromatic index of $G$ is precisely $\sigma(G)$. This result
reflects the intuition that a planar graph with a large girth locally looks like a tree.

\bigskip\noindent
{\sl Keywords}: Strong chromatic index, planar graph, girth.
\end{abstract}

\section{Introduction} 

A {\em strong $k$-edge-coloring} of a graph $G$ is a mapping
from $E(G)$ to $\{1,2,\ldots,k\}$ such that every pair of distinct
edges at distance at most two receive different colors. It
induces a proper vertex coloring of $L(G)^{2}$, the square
of the line graph of $G$. The {\em strong chromatic index $\chi_{s}'(G)$}
of $G$ is the minimum $k$ for which $G$ has a strong $k$-edge-coloring.
This concept was introduced by Fouquet and Jolivet~\cite{FJ83,fj1984} 
to model the channel assignment in some radio networks.
For more applications, see~\cite{BI+06,NK+00,Ram97,RL99,JN2001,TW+2004}.

\medskip{}

A Vizing-type problem was asked by Erd\H{o}s and Ne\v{s}et\v{r}il,
and further strengthened by Faudree, Schelp, Gy\'{a}rf\'{a}s and Tuza
to give an upper bound for $\chi_{s}'(G)$ in terms of the maximum degree $\Delta=\Delta(G)$:

\begin{conj}[Erd\H{o}s and Ne\v{s}et\v{r}il~'88~\cite{e1988}~'89~\cite{en1989}, Faudree~{\em et al}~'90~\cite{FG+90}] \label{conjErdosNesetril}
If $G$ is a graph with maximum degree $\Delta$, then
$
  \chi_{s}'(G)\leq \Delta^2 + \lfloor\frac{\Delta}{2}\rfloor^2.
$
\end{conj}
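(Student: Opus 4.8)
The plan is to separate two regimes, small $\Delta$ and large $\Delta$, because the sharpest tools differ. For small $\Delta$ I would look for an exact argument on a minimal counterexample: when $\Delta=3$ the bound to be shown is $\chi'_s(G)\le 10$, and this can be approached by listing reducible configurations (low‑degree vertices, short paths, small cuts) and running a discharging scheme, with a finite case analysis to extend a given strong colouring across each reducible piece; for the next few values of $\Delta$ one would try the same structural‑reduction‑plus‑finite‑check strategy. Throughout, the target to keep in mind is the balanced blow‑up of $C_5$ — each vertex replaced by an independent set of $\lfloor\Delta/2\rfloor$ or $\lceil\Delta/2\rceil$ vertices — whose square of the line graph is a clique of size exactly $\Delta^2+\lfloor\Delta/2\rfloor^2$. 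Any proof must be tight against this graph, which already tells us that crude counting will not be enough.

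For large $\Delta$ the route is the probabilistic method. First record the structural fact that every vertex of $L(G)^2$ (i.e.\ every edge of $G$) has degree at most $2\Delta(\Delta-1)$, so greedy colouring gives roughly $2\Delta^2$; the job is to push this down toward $\tfrac54\Delta^2$. I would run an iterative semi‑random (``nibble'') colouring: repeatedly assign each still‑uncoloured edge a tentative colour chosen uniformly from a palette of size $(\tfrac54+\varepsilon)\Delta^2$, uncolour an edge whenever it conflicts with an edge at distance $\le 2$, and use the Lovász Local Lemma to show that with positive probability the number of colours still available to each uncoloured edge stays comfortably above the number of uncoloured edges in its neighbourhood, so the process terminates. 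The quantitative heart of this is a \emph{local sparsity} estimate: if the second neighbourhood of an edge in $G$ induces a large clique in $L(G)^2$ (so that the greedy count there is bad), then that second neighbourhood must in fact be small, because a dense cluster of mutually‑interfering edges forces many repeated vertices. Making this trade‑off precise — bounding the number of edges at distance $\le 2$ in terms of the local clique structure of $L(G)^2$ — is where the improvement of the leading constant from $2$ down toward $\tfrac54$ has to be extracted.

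The hard part, and the reason Conjecture~\ref{conjErdosNesetril} is still open, is calibrating that sparsity/density dichotomy tightly enough to reach the constant $\tfrac54$ rather than the value around $1.8\Delta^2$ that refined Local‑Lemma arguments currently attain; since the $C_5$‑blow‑up sits exactly on the boundary between the ``dense'' and ``sparse'' cases, there is essentially no slack available in the local counting, and the usual concentration losses in the nibble already overshoot the target. A secondary obstacle is gluing the small‑$\Delta$ and large‑$\Delta$ arguments so that no intermediate range of $\Delta$ is left uncovered, since the probabilistic argument only kicks in once $\Delta$ is large. Given these difficulties, a realistic outcome is to prove $\chi'_s(G)\le c\Delta^2$ for the smallest constant $c$ one can manage and to settle the exact value only for a handful of further small $\Delta$, leaving the full conjecture as the stated open problem.
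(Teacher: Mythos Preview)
The statement you are addressing is a \emph{conjecture} in the paper, not a theorem; the paper offers no proof of it and indeed cites it precisely as an open problem, immediately followed by the remark that the best known general upper bound is Bruhn and Joos's $1.93\Delta^2$. So there is no ``paper's own proof'' for me to compare your proposal against.

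That said, your write-up is an honest and well-informed survey of the state of the art rather than a proof, and you correctly identify why the problem is hard: the nibble/Local Lemma machinery loses too much in the concentration step to reach the constant $\tfrac54$, and the $C_5$-blow-up leaves no slack in the extremal count. Your final paragraph already concedes that the realistic outcome is a constant strictly above $\tfrac54$ and a handful of small-$\Delta$ cases --- which is exactly the current status. In short, there is no error in your proposal, but there is also no proof; what you have written is a research plan that (accurately) terminates in ``this remains open,'' matching the paper's treatment of Conjecture~\ref{conjErdosNesetril}.
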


\medskip{}

 As demonstrated in~\cite{FG+90}, there are indeed
 some graphs reach the given upper bounds.

\medskip{}

By a greedy algorithm, it can be easily seen that $\chi_{s}'(G)\leq2\Delta(\Delta-1)+1$.
Molloy and Reed~\cite{MR97} using the probabilistic method
to show that $\chi_{s}'(G)\le1.998\Delta^{2}$ for
maximum degree $\Delta$ large enough. Recently, this upper bound was improved by
Bruhn and Joos~\cite{BJ15} to $1.93\Delta^2$.

For small maximum degrees, the cases $\Delta=3$ and $4$ were studied.
Andersen~\cite{And92} and Hor\'ak {\em et al}~\cite{HQT93} proved that
$\chi'_s(G)\leq 10$ for $\Delta(G)\leq 3$ independently; and
Cranston~\cite{Cra06} showed that $\chi'_s(G)\leq 22$ when $\Delta(G)\leq 4$.

According to the examples in~\cite{FG+90}, the bound
is tight for $\Delta=3$, and the best we may expect for $\Delta=4$ is 20.

\medskip{}

The strong chromatic index of a few families of graphs are examined, such as cycles, trees,
$d$-dimensional cubes, chordal graphs,
Kneser graphs, $k$-degenerate graphs, chordless graphs and $C_4$-free graphs,
see~\cite{BF2015,CN2013,DG+2015,FG+90,Mah00,Wang2015,Yu2015}.  As for Halin graphs,
refer to \cite{cL,LLt2010,LL2011,slt2006,st2009}.
For the relation to various graph products, see~\cite{Tog2007}.

\medskip{}

Now we turn to planar graphs.

Faudree {\em et al} used the Four Color Theorem~\cite{AHa77,AHb77} to prove that
planar graphs with maximum degree $\Delta$ are strong $(4\Delta+4)$-edge-colorable~\cite{FG+90}.
By the same spirit, it can be shown that $K_{5}$-minor free graphs are
strong $(4\Delta+4)$-edge-colorable. Moreover,
every planar $G$ with girth
at least 7 and $\Delta\ge 7$ is strong $3\Delta$-edge-colorable by applying
a strengthened version of Vizing's Theorem on planar graphs~\cite{SanZ2001,Viz65}
and Gr\H{o}tzsch's theorem~\cite{Gro59}.

\medskip{}

The following results are obtained by using a discharging method:

\begin{thm}[Hud\'ak {\em et al} '14~\cite{Hudak+14}] \label{theo-Hudak3D+5}
If $G$ is a planar graph with girth at least $6$ and maximum degree
at least $4$, then $\chi'_{s}(G)\leq3\Delta(G)+5$.
\end{thm}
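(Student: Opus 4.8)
The plan is to use the standard method of a minimal counterexample together with a discharging argument. Suppose the theorem fails and choose a counterexample $G$ with $|V(G)|+|E(G)|$ as small as possible; write $\Delta=\Delta(G)\ge4$ and $k=3\Delta+5$. Then $G$ is connected, has girth at least $6$, satisfies $\chi'_s(G)>k$, while every planar graph of girth at least $6$ with fewer vertices and edges has a strong $k$-edge-coloring (deleting edges or vertices preserves planarity and girth $\ge6$). The proof then splits into (i) a catalog of reducible configurations that therefore cannot occur in $G$, and (ii) a discharging argument showing that no girth-$6$ planar graph avoids the whole catalog.

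Step (i) rests on the elementary extension lemma: if $e=uv\in E(G)$ and the number of edges of $G$ within distance $2$ of $e$ (other than $e$ itself) is at most $k-1=3\Delta+4$, then color $G-e$ by minimality and greedily recolor $e$, a contradiction. Since the girth is at least $6$, the first and second neighborhoods of an edge contain no cycle of length at most $5$, so this quantity is read off exactly from the degrees around $e$ with no overcounting. This rules out $1$-vertices; it makes the edge joining two adjacent $2$-vertices reducible (the count there is at most $2\Delta$), so no two $2$-vertices are adjacent; and, pushed further, it forbids a $2$-vertex with a low-degree neighbor, short pendant paths, and assorted small clusters of $2$- and $3$-vertices. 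The purpose of this bookkeeping is to guarantee that every $2$- and $3$-vertex of $G$ is surrounded by ``rich'' structure --- long incident faces and, for large $\Delta$, high-degree neighbors.

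For step (ii), note that every face of $G$ has length at least $6$; assign charge $\deg(v)-4$ to each vertex $v$ and $\deg(f)-4$ to each face $f$. By Euler's formula the total charge is $4|E|-4|V|-4|F|=-4(|V|-|E|+|F|)=-8$. Faces and vertices of degree at least $4$ begin nonnegative; only $2$-vertices (charge $-2$) and $3$-vertices (charge $-1$) are in deficit. One then designs discharging rules that ship the surplus of the long faces --- and, where needed, of incident high-degree vertices --- to the nearby $2$- and $3$-vertices, invoking the configurations from step (i) to be sure the deficient vertices are spread thinly enough that each can collect what it needs (for instance, a $6$-face meets few $2$-vertices, and a $2$-vertex together with its two incident faces and its two neighbors can cover its debt of $2$). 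Verifying that every vertex and every face ends with nonnegative charge then contradicts the total $-8$.

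The main obstacle is calibrating (i) and (ii) so that everything closes at exactly $3\Delta+5$: a $2$-vertex is expensive (it wants two units of charge), a planar graph of girth $6$ may contain many of them, and the extension lemma alone does \emph{not} make a $2$-vertex adjacent to a $\Delta$-vertex reducible once $\Delta$ is large --- so one must establish enough additional structural reducibility to ensure that every such $2$-vertex lies next to a face or a vertex wealthy enough to subsidize it, and then tune the fractional amounts sent along each rule accordingly. Presumably this is exactly where girth $6$ is needed; with girth $7$ the larger faces leave enough slack to push the bound down to $3\Delta$.
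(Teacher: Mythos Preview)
This theorem is not proved in the present paper at all: it is quoted in the introduction as a prior result of Hud\'ak, Lu\v{z}ar, Sot\'ak and \v{S}krekovski~\cite{Hudak+14}, with no argument supplied here beyond the remark that ``the following results are obtained by using a discharging method.'' So there is no proof in this paper to compare your proposal against; the authors simply cite the bound as background before moving on to their own main theorem about graphs of much larger girth.

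That said, your outline is consistent with the method the paper attributes to~\cite{Hudak+14}: minimal counterexample, a list of reducible configurations via the greedy extension bound, and discharging with initial charges $\deg(v)-4$ and $\deg(f)-4$. But what you have written is a plan, not a proof. You never actually state which configurations are reducible, you never write down the discharging rules, and you explicitly flag the hard part (``the main obstacle is calibrating (i) and (ii) so that everything closes at exactly $3\Delta+5$'') without carrying it out. In particular, your claim that the extension lemma ``rules out $1$-vertices'' is already false as stated: a pendant edge $uv$ with $\deg(u)=1$ and $\deg(v)=\Delta$ can see up to $\Delta-1+(\Delta-1)(\Delta-1)$ other edges, which for large $\Delta$ exceeds $3\Delta+4$, so $1$-vertices require a separate (easy) argument rather than the blanket extension count you invoke. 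If you want a genuine proof you would need to consult~\cite{Hudak+14} for the precise catalog of reducible configurations and the exact rule weights; the sketch here does not supply them.
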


\begin{thm}[Hud\'ak {\em et al} '14~\cite{Hudak+14}] \label{theo-Hudak3D}
If $G$ is a planar graph with girth at least $7$, then $\chi'_{s}(G)\leq3\Delta(G)$.
\end{thm}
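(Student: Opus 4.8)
The plan is to argue by contradiction through the discharging method. Suppose $G$ is a counterexample with the fewest edges, so $G$ is connected, has girth at least $7$, and $\chi'_s(G)>3\Delta$ while every proper subgraph admits a strong $3\Delta$-edge-coloring. Fix a plane embedding of $G$ and write $\ell(f)$ for the length of a face $f$; the girth hypothesis forces $\ell(f)\ge 7$ for every face. The engine of the reducibility arguments is a \emph{demand count}: for an edge $uv$, the edges at distance at most two from $uv$ are exactly those sharing a vertex with $uv$ together with those joined to an endpoint of $uv$ by a single edge, and because $G$ has girth at least $7$ no two of these coincide (any such coincidence would create a cycle of length at most $4$). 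Their number is
\[
  D(uv)=(\deg(u)-1)+(\deg(v)-1)+\sum_{x\in N(u)\setminus\{v\}}(\deg(x)-1)+\sum_{y\in N(v)\setminus\{u\}}(\deg(y)-1).
\]
If $D(uv)<3\Delta$, then deleting $uv$, strong $3\Delta$-coloring $G-uv$ by minimality, and recoloring $uv$ with one of the at least $3\Delta-D(uv)\ge 1$ colors used on no edge within distance two contradicts the choice of $G$; hence every edge must satisfy $D(uv)\ge 3\Delta$.

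First I would mine this inequality for forbidden configurations. A direct substitution shows that an edge joining two vertices of degree $2$ has $D\le 2\Delta$, so the degree-$2$ vertices of $G$ form an independent set (in particular $G$ has no nontrivial threads). Likewise a leaf hanging at a vertex of degree at most $3$, or more generally any light edge whose endpoints have small degree sum, violates $D\ge 3\Delta$ and is excluded. The borderline cases, where $D(uv)=3\Delta$ exactly (for instance a leaf at a degree-$4$ vertex whose other three neighbors all have degree $\Delta$), cannot be settled by a single greedy step; for these I would uncolor a small bounded set of edges around the configuration and recolor it by a counting/alternating-color argument, showing that the tight equality cannot in fact be realized. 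Assembling these yields a list of reducible configurations asserting, roughly, that low-degree vertices are sparse and cannot cluster.

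Next comes the discharging. By Euler's formula, assigning each vertex $v$ the charge $\deg(v)-4$ and each face $f$ the charge $\ell(f)-4$ gives total charge $-8$. Since $\ell(f)\ge 7$, every face starts with charge at least $3$, and only vertices of degree at most $3$ carry negative charge (namely $-3,-2,-1$ for degrees $1,2,3$). I would then design rules sending charge from the large faces, and from vertices of degree at least $5$, to their incident or nearby vertices of degree at most $3$; the reducibility results bound how many needy vertices a face can be asked to support (e.g.\ a face of length $\ell$ carries at most $\lfloor \ell/2\rfloor$ pairwise non-adjacent degree-$2$ vertices), so each face and each vertex ends with nonnegative charge. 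This contradicts the total $-8$ and completes the proof.

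The hard part will be twofold. First, proving the tight configurations (those with $D=3\Delta$) reducible requires genuine recoloring rather than greed, and the case analysis must cover every degree pattern that saturates the demand bound, especially leaves and degree-$2$ vertices adjacent to maximum-degree vertices. Second, and more delicate, is calibrating the discharging so that the \emph{exact} coefficient $3$ is reached rather than $3\Delta+O(1)$: the face charges provided by girth $7$ are just barely sufficient, so the rules must be tuned to the worst local pictures (degree-$2$ and degree-$3$ vertices distributed around short faces and around maximum-degree vertices), and verifying nonnegativity in these extremal cases is where the bulk of the bookkeeping lies.
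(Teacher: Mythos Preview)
The paper does not actually prove this theorem: it is quoted as background from Hud\'ak \emph{et al.}~\cite{Hudak+14}, with only the remark that it and its companion Theorem~\ref{theo-Hudak3D+5} ``are obtained by using a discharging method.'' There is therefore no in-paper proof to compare your proposal against; your outline (minimal counterexample, demand bound $D(uv)\ge 3\Delta$, extraction of forbidden light configurations, then Euler-formula discharging with initial charges $\deg(v)-4$ and $\ell(f)-4$) is consonant with that one-line attribution and is the standard template for such results.

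As a plan it is reasonable, but two places are still genuinely open rather than routine. First, your reducibility list is only indicative: the single inequality $D(uv)\ge 3\Delta$ rules out $2$--$2$ edges and a few other light edges, but by itself it does not exclude, for example, many degree-$3$ vertices sitting around a $7$-face, and your promised ``counting/alternating-color argument'' for the $D(uv)=3\Delta$ borderline configurations is not specified at all. Second, the discharging you sketch has not been balanced: with girth exactly $7$ a face carries charge $3$ and can be incident with up to three degree-$2$ vertices (pairwise non-adjacent) each needing $2$ units, so the naive rule already overdraws; one needs either sharper reducible configurations restricting how low-degree vertices sit on short faces, or auxiliary transfers from high-degree vertices, and you have not shown either can be made to close. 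These are exactly the points you flag as ``the hard part,'' but at present they are assumptions, not arguments.
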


\medskip{}

And the bounds are improved by Bensmail {\em et al}.

\begin{thm}[Bensmail {\em et al} '14~\cite{Bensmail+14}]
\label{theo-Bensmail} If $G$ is a planar graph with girth at least
$6$, then $\chi'_{s}(G)\leq3\Delta(G)+1$. \end{thm}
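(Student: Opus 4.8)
The plan is to argue by contradiction via a minimal counterexample combined with a discharging argument, refining the approach behind Theorems~\ref{theo-Hudak3D+5} and~\ref{theo-Hudak3D}. First I would dispose of small maximum degree: if $\Delta(G)\le 2$ then $G$ is a disjoint union of paths and cycles, the latter of length at least $6$ since the girth forbids shorter ones, and a direct check gives $\chi'_s(G)\le 5<3\Delta(G)+1$; if $\Delta(G)=3$ then $\chi'_s(G)\le 10=3\Delta(G)+1$ by the subcubic bound of Andersen and of Hor\'ak {\em et al}~\cite{And92,HQT93}. Moreover, if $G$ has girth at least $7$ we are already done by Theorem~\ref{theo-Hudak3D}. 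Hence we may assume $\Delta:=\Delta(G)\ge 4$ and that $G$ has at least one face of length exactly $6$, and we take $G$ to minimize $|V(G)|+|E(G)|$. Any proper subgraph $H$ of $G$ has girth at least $6$ and $\Delta(H)\le\Delta$, so $\chi'_s(H)\le 3\Delta+1$ (by minimality if $\Delta(H)\ge 4$, and by the cases just settled otherwise); this ``uncolor a small piece and put it back'' principle is the engine of the proof.

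Next I would compile a list of \emph{reducible configurations}: small subgraphs that cannot occur in $G$. Natural candidates are a vertex of degree $1$; two adjacent vertices of degree $2$; a vertex of degree $2$ adjacent to a vertex of degree at most $3$; a vertex of degree $4$ with three neighbors of degree $2$; and, crucially for girth exactly $6$, configurations forcing a $6$-face to carry several degree-$2$ vertices whose off-face neighbors all have small degree. For each configuration one deletes or contracts a well-chosen part to obtain a smaller $G'$, colors $G'$ with $3\Delta+1$ colors, and re-inserts the removed edges in a favorable order. The point is the forbidden-color count for an edge $xy$: the edges at distance at most $2$ from $xy$ are those meeting $x$, those meeting $y$, and those meeting a neighbor of $x$ or of $y$; since the girth is at least $6$ these edge sets are pairwise disjoint and none equals $xy$, so at most $(\deg x-1)+(\deg y-1)+\sum_w(\deg w-1)$ colors are forbidden, the sum over the remaining neighbors $w$ of $x$ and of $y$. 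When $x$ or $y$ lies in a low-degree configuration this is strictly less than $3\Delta+1$, so a free color exists. The awkward instances — a pendant vertex hanging on a vertex of large degree, or a degree-$2$ vertex wedged between two large-degree vertices — defeat the naive count; there one removes the small vertex together with a few incident edges of its big neighbor and recolors those too, using the sparsity of the big neighbor's second neighborhood (again a consequence of girth $\ge 6$) to find room.

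Then comes the discharging phase. Assign charge $\mu(v)=2\deg(v)-6$ to each vertex and $\mu(f)=\deg(f)-6$ to each face; Euler's formula gives $\sum_v\mu(v)+\sum_f\mu(f)=-12$. Girth at least $6$ makes every $\mu(f)\ge 0$, vertices of degree $3$ are neutral, and the only deficit sits at degree-$2$ vertices, each short by $2$. By the reducible configurations, every degree-$2$ vertex now has both neighbors of degree at least $4$, each vertex of degree $4$ or $5$ has few degree-$2$ neighbors, and $6$-faces carrying degree-$2$ vertices have enough large vertices on them. The discharging rules would have each vertex of degree at least $4$ distribute its surplus $2\deg(v)-6$ among its degree-$2$ neighbors, with incident $6$-faces covering any residual shortfall, calibrated so that every degree-$2$ vertex ends at charge $\ge 0$ while every other vertex and every face stays $\ge 0$ (here $2d-6\ge d$ for $d\ge 6$, and the degree-$4$ and degree-$5$ cases are handled by the bound on the number of degree-$2$ neighbors); the total is then $\ge 0$, contradicting $-12$.

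The main obstacle, I expect, is choosing the list of reducible configurations so that it is at once provably reducible and strong enough to force the discharging to balance; the presence of $6$-faces, which carry charge $0$, is exactly what makes this delicate, since one cannot simply draw on face charge and must instead control precisely how degree-$2$ vertices are distributed around $6$-faces and who their neighbors are. The second pressure point is the reducibility of the ``tight'' configurations, where several edges must be uncolored and recolored simultaneously in a carefully chosen order. Once these two issues are settled, the Euler-formula bookkeeping is routine.
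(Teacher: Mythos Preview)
This theorem is not proved in the paper at all: it is quoted as a background result from Bensmail \emph{et al.}~\cite{Bensmail+14} and is used only for context in the introduction. There is therefore no proof in the paper to compare your proposal against.

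For what it is worth, your outline is a plausible sketch of the discharging approach that the original reference~\cite{Bensmail+14} employs, and you have correctly identified the two genuine pressure points (finding a reducible-configuration list that simultaneously works for the recoloring step and for the discharging balance, and handling the tight cases where several edges must be recolored in a controlled order). But as written it is only a plan, not a proof: you have not specified which configurations are reducible, nor verified any of the recoloring counts, nor stated the discharging rules, so nothing can be checked. If you intend this as an actual proof you would need to fill in all of those details; if you intend it as a pointer to the literature, a citation to~\cite{Bensmail+14} is what the present paper does.
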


\begin{thm}[Bensmail {\em et al} '14~\cite{Bensmail+14}]
\label{theo-Bensmail-1} If $G$ is a planar graph with girth at least
$5$ or maximum degree at least $7$, then $\chi'_{s}(G)\leq4\Delta(G)$.
\end{thm}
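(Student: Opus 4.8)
The plan is to argue by contradiction through induction on the number of edges, treating the two hypotheses (girth at least $5$ and maximum degree at least $7$) as two separate sources of local sparsity that feed the same discharging machine. Fix the number of colors at $4\Delta$ and suppose the theorem fails; let $G$ be a planar graph with $\Delta(G)\le\Delta$, satisfying girth at least $5$ or $\Delta\ge 7$, that has fewest edges subject to $\chi'_s(G)>4\Delta$. We may assume $G$ is connected, and every proper subgraph inherits planarity, the bound $\Delta(\cdot)\le\Delta$ and the relevant sparsity condition, hence is $4\Delta$-strong-edge-colorable by minimality. The engine of every reduction is the following extension principle: if $e=uv$ is an edge and one has a $4\Delta$-coloring of $G-e$ (or of $G$ minus a few edges, or minus a low-degree vertex), then $e$ can be colored provided the number of already-colored edges within distance two of $e$ is strictly less than $4\Delta$; that count is at most $(\deg u-1)+(\deg v-1)+\sum_{x\in N(u)\setminus\{v\}}(\deg x-1)+\sum_{y\in N(v)\setminus\{u\}}(\deg y-1)$.

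First I would assemble a list of reducible configurations. The crude greedy bound above is $\Theta(\Delta^2)$, so the whole point is that in a sparse planar graph the distance-two neighborhood of a \emph{light} edge is small. Concretely I expect to forbid: vertices of degree $1$; vertices of degree $2$ adjacent to vertices of small degree; short threads of consecutive degree-$2$ vertices; and, more generally, clusters of low-degree vertices for which the weighted count above drops below $4\Delta$. Under girth at least $5$ the absence of short cycles guarantees that the neighborhoods appearing in the sum do not overlap and contribute no spurious edges, which keeps these counts honest; under $\Delta\ge 7$ the same counts stay controlled because a large $\Delta$ leaves a proportionally large palette while the forbidden set around a light edge remains linear in $\Delta$. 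Each forbidden configuration is eliminated by deleting the offending vertex or edge, invoking minimality to color what remains, and then extending via the principle above, sometimes after uncoloring and recoloring a bounded number of nearby edges to free a color.

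Next I would run a discharging argument built on Euler's formula. Assign every vertex and every face the initial charge $\mu(x)=\deg(x)-4$, so that $\sum_{x\in V\cup F}\mu(x)=4|E|-4(|V|+|F|)=-8<0$. The design goal is a set of discharging rules under which, once the reducible configurations are excluded, every vertex and every face ends with nonnegative charge, contradicting the negative total. Here the two hypotheses play complementary roles: when the girth is at least $5$ every face has length at least $5$ and thus carries surplus $\mu(f)\ge 1$ that can be shipped to the deficient low-degree vertices, whereas when $\Delta\ge 7$ the surplus comes instead from the high-degree vertices (each with charge $\deg-4\ge 3$) that must surround any cluster of light vertices. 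I anticipate proving the two halves of the theorem with two parallel discharging schemes sharing the same reducibility lemmas, and then merging the conclusions.

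The main obstacle will be matching the reducibility list to the discharging rules so that no configuration slips through, and in particular controlling the distance-two counts in the $\Delta\ge 7$ regime, where triangles and $4$-cycles are permitted and inflate the neighborhood of an edge: the overlaps among $N(u)$, $N(v)$ and their neighborhoods must be tracked carefully, and the recoloring steps used to free a color need enough slack in the palette, which is precisely why the threshold $\Delta\ge 7$ (rather than something smaller) is forced. Balancing how much charge each high-degree vertex or long face can afford to send against the total deficit of the low-degree vertices adjacent to it is the delicate bookkeeping that ultimately decides the proof.
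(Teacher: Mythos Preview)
The paper does not prove this theorem at all: it is stated as a cited background result of Bensmail, Harutyunyan, Hocquard and Valicov, and no argument for it appears anywhere in the present paper. Consequently there is no ``paper's own proof'' to compare your proposal against.

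As for your sketch on its own merits: what you have written is the generic skeleton of a minimal-counterexample-plus-discharging proof, and indeed the original paper of Bensmail \emph{et al.} proceeds in that spirit. But your proposal stops well short of a proof. You neither name a single concrete reducible configuration (beyond the trivial degree-$1$ vertex) nor state a single discharging rule, and the crucial content of such arguments lies entirely in the precise list of configurations and the exact accounting that shows nonnegativity of final charge. Phrases like ``I expect to forbid \ldots\ short threads of consecutive degree-$2$ vertices'' and ``I anticipate proving the two halves \ldots\ with two parallel discharging schemes'' are declarations of intent, not arguments. In particular, the $\Delta\ge 7$ case allows triangles and $4$-cycles, and you correctly flag that the distance-two neighborhood of an edge can blow up there; handling this requires specific structural lemmas (about how light edges sit relative to short faces) that you have not supplied. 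Until those are written down and the discharging balance is actually checked, what you have is a plausible plan rather than a proof.
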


\medskip{}

It is also interesting to see the asymptotic behavior of strong chromatic
index when the girth is large enough.

\medskip{}

\begin{thm}[Borodin and Ivanova '13~\cite{Borodin+13}]
If
$G$ is a planar graph with maximum degree $\Delta \ge 3$ and girth
at least $40\lfloor\frac{\Delta}{2}\rfloor+1$, then $\chi'_{s}(G)\leq2\Delta-1$.
\end{thm}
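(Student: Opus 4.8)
The plan is to argue by contradiction via the discharging method. Suppose the conclusion fails and choose a counterexample $G$ minimizing $|V(G)|+|E(G)|$, over all $\Delta\ge 3$ and all planar graphs of girth at least $40\lfloor\Delta/2\rfloor+1$; write $g$ for the girth of $G$ and $k=2\Delta-1$, so $k\ge 5$. Then $G$ is connected; it has a cycle (otherwise $G$ is a forest, and $\chi'_{s}(G)=\sigma(G)\le 2\Delta-1$); and it is not a single cycle (a cycle admits a strong $5$-edge-coloring), so $G$ has a vertex of degree at least $3$. Since $k=2\Delta-1\ge\sigma(G)$ for every planar graph, the theorem asserts that a large girth forces $\chi'_{s}(G)$ down to its trivial lower bound, and the job is to convert ``$G$ looks locally like a forest'' into reducible configurations.

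The cheap reductions come first. Call a maximal path whose internal vertices all have degree $2$ a \emph{thread}, its \emph{length} being its number of edges. If $G$ contained a thread of length at least $5$, pick an edge $e$ of it that has at least two further thread-edges on each side; then $e$ lies within distance $2$ of only those four edges, so a strong $k$-edge-coloring of $G-e$ --- which exists by minimality, since $G-e$ is planar, has girth at least $g$, maximum degree at most $\Delta$, and strictly smaller size --- extends to $G$ because $e$ sees at most four colors and $k\ge 5$, a contradiction. Hence every thread of $G$ has length at most $4$. The same ``color the deleted edge last'' idea also rules out a $1$-vertex adjacent to a $2$-vertex, and a $1$-vertex whose neighbor has all its other neighbors of degree $2$.

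The technical heart is to eliminate $1$-vertices entirely and, more broadly, to control vertices of degree close to $\Delta$. If $v$ is a $1$-vertex with neighbor $u$ of degree $d$, then coloring $uv$ greedily needs $\sum_{w\in N(u)\setminus\{v\}}(\deg(w)-1)\le 2\Delta-1-d$; for $d$ near $\Delta$ this forces $u$ to be almost entirely surrounded by $2$-vertices, hence by (necessarily short) threads, and one must then push the analysis one layer further out. When greedy coloring no longer suffices, the reduction deletes a carefully chosen set of edges near $u$, colors the smaller graph by minimality, and then \emph{recolors a bounded region} around $u$, the lengths of the incident threads supplying the room to maneuver. The outcome should be a catalog of forbidden configurations saying, in effect, that every vertex of degree near $\Delta$ must anchor a large total thread-length. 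Establishing this catalog by recoloring is the step I expect to be hardest.

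Finally, the discharging. Fix a plane embedding of $G$ and, via Euler's formula with weights depending on $g$, assign charges so that the total is a fixed negative number while every face and every vertex of degree at least $3$ is nonnegative; the deficient objects are then exactly the vertices of degree at most $2$, with $1$-vertices the most deficient. The rules move charge from the long faces and from the high-degree anchors outward along the threads to the $2$- and $1$-vertices lying on them. Combined with the structural catalog --- threads are short, every near-$\Delta$ vertex is forced to anchor a large total thread-length, and $1$-vertices demand still more --- this shows that all the demands can be met only if $G$ has a cycle of length at most $40\lfloor\Delta/2\rfloor$; since $g\ge 40\lfloor\Delta/2\rfloor+1$, every vertex finishes with nonnegative charge, contradicting the negative total. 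Besides the recoloring reductions, the second delicate point is calibrating the discharging weights so that this arithmetic lands exactly on $40\lfloor\Delta/2\rfloor+1$; the precise constant is dictated by the worst local configuration into which a $\Delta$-vertex can be forced.
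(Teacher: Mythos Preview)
The paper does not prove this theorem. It is quoted in the introduction as a known result of Borodin and Ivanova~\cite{Borodin+13}, alongside several other cited bounds, purely as background and motivation for the paper's own main theorem (Theorem~\ref{main}). There is consequently no proof in the paper to compare your proposal against.

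As for the proposal itself, it is a sketch rather than a proof, and you say so explicitly: the reducibility catalog for vertices of degree close to $\Delta$ with pendant leaves, and the calibration of the discharging rules, are both left as ``the step I expect to be hardest.'' Your easy reduction (a thread with at least five edges contains a reducible edge when $k\ge 5$) is correct, but note that it only yields anything once degree-$1$ vertices are controlled: a vertex of degree $2$ in the contracted graph $H$ may have up to $\Delta-2$ pendant leaves in $G$, so a ``thread'' in $H$ is really a caterpillar in $G$, and the greedy argument no longer applies. This is exactly where the paper's own machinery for Theorem~\ref{main} lives --- the caterpillar pre-coloring Lemma~\ref{caterpillar} --- and it is also, presumably, where Borodin and Ivanova spend their effort; but the present paper gives no account of their argument.
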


\begin{thm}[Chang {\em et al} '13~\cite{Chang+13}] If $G$
is a planar graph with maximum degree $\Delta \ge 4$ and girth at least
$10\Delta+46$, then $\chi'_{s}(G)\leq2\Delta-1$.
\end{thm}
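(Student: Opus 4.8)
The plan is to argue by minimal counterexample and discharging. Suppose the statement fails and let $G$ be a counterexample with the fewest edges; write $\Delta=\Delta(G)$ and $g$ for the girth, so $\Delta\ge 4$, $g\ge 10\Delta+46$, and $G$ is connected (and contains a cycle, since forests satisfy the bound trivially). By minimality, every planar graph with maximum degree at most $\Delta$, girth at least $g$, and fewer edges than $G$ has a strong $(2\Delta-1)$-edge-coloring; the whole proof consists of deleting a small piece of $G$, coloring the rest, reinserting and extending the coloring, and then showing the resulting list of ``reducible configurations'' is incompatible with $G$ being planar of girth $g$.

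First I would establish the reducible configurations. The easiest is that $G$ has minimum degree at least $2$: a pendant edge $uv$ with $\deg v=1$ is a leaf of the line graph, so deleting it changes no distance between the remaining edges, and — at least when $u$ and each of its other neighbors have degree at most $2$ — the edge $uv$ lies at distance at most two from strictly fewer than $2\Delta-1$ edges, so its color extends. The key structural statement concerns \emph{threads}, i.e.\ maximal paths whose internal vertices all have degree $2$: I claim that in $G$ every thread has at most $K$ internal vertices for some $K=K(\Delta)$ of order $\Delta$, and more generally that a vertex of degree at least $3$ cannot carry a family of threads that are collectively too short. To see that a long thread is reducible, delete a sub-path from deep in its interior; by minimality the smaller graph has a strong $(2\Delta-1)$-edge-coloring, and one recolors the deleted edges one at a time, marching away from a fixed endpoint. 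Two steps into the thread each such edge sees only a bounded number of already-colored edges, and the one or two distance-$2$ constraints that were ``cut'' by the deletion get repaired by recoloring a single extra edge near the splice point. The subtlety is that an edge incident to a degree-$\Delta$ vertex is at distance at most two from about $2\Delta$ edges, so with only $2\Delta-1$ colors nothing can be colored greedily near a big vertex; the march works only because a long thread leaves enough of those neighbors uncolored (or on the deleted sub-path) at the moment each edge is colored. Tuning $K$ so that every such local reduction succeeds is precisely what consumes the linear-in-$\Delta$ lower bound on $g$.

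With the reducible configurations available I would discharge. Give each vertex $v$ initial charge $\mu(v)=\tfrac{g-2}{2}\deg(v)-g$; planarity and girth $g$ give, via Euler's formula, $\sum_v\mu(v)\le -2g<0$. A degree-$2$ vertex has charge $-2$ and a vertex of degree $d\ge 3$ has surplus $\tfrac{g-2}{2}(d-2)-2>0$, so the natural rule is that each vertex of degree at least $3$ splits its surplus equally among its incident threads and sends it down to the degree-$2$ vertices on them. Using that threads are short and that large vertices are not surrounded by too many short threads, one verifies that after discharging every degree-$2$ vertex has received at least $2$ and every larger vertex is still nonnegative, contradicting $\sum_v\mu(v)<0$. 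Quantitatively, $g\ge 10\Delta+46$ is exactly what makes the per-thread hand-down $\tfrac{g-2}{2}\cdot\tfrac{d-2}{d}$ outweigh the demand $2K$ along a bounded-length thread, while keeping $K<g$ so that no thread (or pair of parallel threads) can close into a cycle shorter than $g$.

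The step I expect to be the real obstacle is the reducibility of threads near vertices of large degree: with $2\Delta-1$ colors and edges that see roughly $2\Delta$ others, the recoloring has almost no slack, and one must keep careful track of which edges along the deleted sub-path are live, how the broken distance-$2$ constraints near the splice interact with the colors already present, and why a thread of length $\Theta(\Delta)$ — not an absolute constant — is what lets the marching argument retain a free color all the way across. Once the thread reduction and the constant $K$ are pinned down, the remaining pieces (the degree-$1$ reduction, the local analysis at degree-$3$ vertices, and the discharging arithmetic) should be routine.
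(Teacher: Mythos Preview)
This theorem is not proved in the present paper; it is a result of \cite{Chang+13} quoted as background in the introduction, so there is no proof here to compare your proposal against.

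That said, a couple of remarks on your sketch. Your overall architecture---minimal counterexample, reducibility of long threads, and a planarity/discharging argument forcing such a thread---is indeed how results of this type are proved; the paper itself uses the same skeleton for its own main theorem (see the proof of Theorem~\ref{main} together with Lemma~\ref{p_lg_path}). Two of your steps, however, are shakier than you suggest. First, the reduction of degree-$1$ vertices is not a one-line greedy extension: a pendant edge $uv$ can be within distance~$2$ of far more than $2\Delta-1$ edges (up to order $\Delta^{2}$ if $u$ and its neighbors all have large degree), so one cannot simply color it last. The usual workaround is closer to what this paper does in the proof of Theorem~\ref{main}: rule out vertices with all-but-one leaf neighbors via a cut-edge argument (Lemma~\ref{cut}), then pass to the leaf-deleted subgraph, which has minimum degree at least~$2$. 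Second, and more seriously, the thread reduction with exactly $2\Delta-1$ colors cannot be carried out by the greedy ``march and patch one edge at the splice'' you describe, precisely because of the zero-slack issue you yourself flag: the constraints imposed at the two ends of the deleted segment interact nontrivially, and a single local recoloring will not in general reconcile them. The method actually used in \cite{Chang+13} (and in \cite{WZ15}; this paper alludes to it as ``the odd graph method'' at the end of the proof of Lemma~\ref{caterpillar}) replaces the march by a structured coloring of the entire thread coming from walks in an odd/Kneser graph. That device is what makes it possible to interpolate between two prescribed end-configurations with no slack, and it is the real source of the linear-in-$\Delta$ length requirement on the thread.
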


\begin{thm}[Wang and Zhao '15~\cite{WZ15}] If $G$
is a planar graph with maximum degree $\Delta \ge 4$ and girth at least
$10\Delta-4$, then $\chi'_{s}(G)\leq2\Delta-1$.
\end{thm}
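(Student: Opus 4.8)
The plan is to argue by contradiction with the discharging method, using that a planar graph of large girth is locally a tree. Since $\sigma(G)\le\chi'_s(G)$ for every graph, it suffices to prove $\chi'_s(G)\le\sigma(G)$; I would in fact prove the formally stronger assertion that every planar graph $H$ of girth at least $g_0$ has $\chi'_s(H)\le\max\{\sigma(H),\Delta(H)+2\}$, where $g_0$ is the threshold hidden in ``large enough''. This implies the theorem (combine it with the trivial lower bound), and it is the right form for an induction, because $\max\{\sigma(H'),\Delta(H')+2\}$ does not increase when we delete an edge or suppress a degree-$2$ vertex. So suppose the assertion fails, let $G$ be a counterexample with $|E(G)|$ smallest, and set $k=\max\{\sigma(G),\Delta(G)+2\}$; from $\Delta\le\sigma\le2\Delta-1$ and $\sigma\ge\Delta+2$ we get $\Delta\ge3$, hence $k\ge5$, and by minimality every proper subgraph of $G$, and every graph obtained from $G$ by suppressing a degree-$2$ vertex, has a strong $k$-edge-colouring.

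Next I would build a list of reducible configurations. The easy ones are threads (maximal paths whose internal vertices all have degree $2$) and pendant threads (the same, with one end a vertex of degree $1$): if such a thread has more than a bounded number of internal degree-$2$ vertices, delete one internal edge far from both ends; the result lies in the class and has fewer edges, so by minimality it has a strong $k$-edge-colouring, which extends to $G$ because the deleted edge has at most four other edges within distance two of it and $k\ge5$ (any pendant subpath split off in the process is coloured on its own and glued back). Hence every thread and every pendant thread of $G$ is short; an analogous argument applied to whole components shows $G$ is connected, and it is not a cycle (else $\chi'_s(G)\le4\le k$). The delicate configurations are small-degree vertices — a vertex of degree $1$, a vertex of degree $2$, or a short thread — attached to a vertex whose remaining neighbours have large degree. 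There one removes the pendant piece, recolours the rest by minimality, and then has to recolour a bounded neighbourhood of the attachment point to liberate a colour; the two facts that make this work are the edge bound $\deg(x)+\deg(y)-1\le\sigma(G)\le k$, which controls how many colours are really forbidden on an edge once the conflict cliques around the attachment point are separated, and the large girth, which makes that neighbourhood a tree so the auxiliary recolouring terminates in a bounded number of steps. In particular these configurations force $G$ to have no vertex of degree $1$.

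With the configuration list complete I would finish by a short counting argument. Since all threads and pendant threads of $G$ are short and $G$ has no degree-$1$ vertex, suppressing every degree-$2$ vertex yields a planar graph $H$ of minimum degree at least $3$; because $G$ has large girth, $H$ has no loop and no multiple edge, so $H$ is a simple planar graph of minimum degree at least $3$ and therefore has a cycle of length at most $5$. Unsuppressing the threads turns this into a cycle of $G$ of bounded length, which contradicts the girth of $G$ being large once $g_0$ is set past that constant. (Equivalently, the discharging version gives each vertex $v$ the charge $\deg(v)-\tfrac{2g}{g-2}$ with $g$ the girth, so that the total charge is negative by Euler's formula; a degree-$2$ vertex is then short by only $\tfrac{4}{g-2}$ and any surviving degree-$1$ vertex by about $1$, and these small deficits are paid along the short threads and pendant edges by the incident vertices of degree $\ge3$, whose surplus is roughly $\deg(v)-2\ge1$, leaving every vertex nonnegative once $g_0$ is large.) So no such $G$ exists.

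I expect the main obstacle to be exactly the reducible configurations around small-degree vertices adjacent to large-degree ones — leaves, degree-$2$ vertices, or short threads hanging off a vertex whose other neighbours are big. For these the naive ``colour this part last'' extension fails and one must perform a Kempe-type recolouring in a tree-like neighbourhood; making those recolourings go through is what both completes the configuration list and fixes the explicit value of $g_0$, while the thread reductions and the final counting are routine.
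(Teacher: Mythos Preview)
This theorem is not proved in the paper; it is quoted from Wang and Zhao~[WZ15] as prior work in the introduction, so there is no proof here to compare your proposal against.

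On the merits of your sketch itself: the overall template (minimal counterexample, long threads are reducible, Euler-formula counting on the suppressed graph) is the right one, and it is exactly the skeleton the present paper uses for its own main result (Theorem~11). But your proposal has a real gap at the crucial point. You wave at ``Kempe-type recolouring in a tree-like neighbourhood'' for the delicate configurations and say this is ``what both completes the configuration list and fixes the explicit value of $g_0$''---yet the statement you are asked to prove has a \emph{specific} girth bound $10\Delta-4$, and nothing in your outline produces any explicit constant, let alone that one. The actual Wang--Zhao argument gets $10\Delta-4$ via the odd-graph method (the paper alludes to this at the end of the proof of Lemma~19); the present paper's own version replaces that with an explicit caterpillar extension lemma (Lemma~19) showing that any precolouring of the two ends of a thread of length $\sigma+3$ extends across it, which is what yields the threshold $5\sigma+16$ in Theorem~11. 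Without one of these concrete extension mechanisms your argument is only a plan, not a proof.

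A smaller slip: you write ``from $\Delta\le\sigma\le2\Delta-1$ and $\sigma\ge\Delta+2$ we get $\Delta\ge3$'', but the Wang--Zhao theorem does not assume $\sigma\ge\Delta+2$. You appear to have imported the hypothesis of this paper's main theorem into the cited result.
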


\medskip{}

The concept of maximum average degree is also
an indicator to the sparsity of a graph.
Graphs with small maximum average degrees are in relation to
planar graphs with large girths, 
as a folklore lemma that can be proved
by Euler's formula points out.

\begin{lem}
A planar graph $G$ with girth $g$ has maximum average degree $\mad(G)<2+\frac{4}{g-2}$.
\end{lem}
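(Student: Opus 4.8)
The plan is to apply Euler's formula together with the girth hypothesis, not directly to $G$ but to a subgraph realizing the maximum average degree. Since $\mad(G)$ is by definition the maximum of $2|E(H)|/|V(H)|$ over all nonempty subgraphs $H\subseteq G$, it suffices to bound this ratio for an arbitrary such $H$. First I would reduce to the case that $H$ is connected: the quantity $2|E(H)|/|V(H)|$ is a mediant of the same quantities computed on the connected components of $H$, hence at most their maximum, so some component attains at least as large a value and we may replace $H$ by it.

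Next I would split according to whether $H$ contains a cycle. If $H$ is acyclic then $|E(H)|\le|V(H)|-1$, so its average degree is strictly below $2$, hence below $2+\frac{4}{g-2}$, and there is nothing to prove. In the remaining case $H$ contains a cycle; being a subgraph of $G$ it has girth at least $g\ (\ge 3)$, so $H$ is simple and $|V(H)|\ge g$. Fix a planar embedding of $H$ and let $n=|V(H)|$, $m=|E(H)|$, and $f$ be its number of faces. The geometric input is that every face is bounded by a closed walk of length at least $g$; summing these lengths over all faces counts each edge exactly twice (once for each side it separates, or twice for the single face a bridge borders), so $2m\ge gf$ and hence $f\le 2m/g$. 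Substituting into Euler's formula $n-m+f=2$ gives $m\cdot\frac{g-2}{g}\le n-2<n$, i.e. $m<\frac{gn}{g-2}$, and therefore $\frac{2m}{n}<\frac{2g}{g-2}=2+\frac{4}{g-2}$, as required.

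The one point that needs care, and the only nontrivial ingredient, is the face-length claim. For a $2$-connected plane graph it is immediate, since then each face boundary is a cycle, necessarily of length at least the girth. For a connected graph that is not $2$-connected a face boundary is only a closed walk and may run along a bridge in both directions; the task is to verify that its length is nonetheless at least $g$, using that $H$ contains a cycle so that no face can be enclosed by bridge edges alone. Once this is granted, everything else is the short computation with Euler's formula displayed above.
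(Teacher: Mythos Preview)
The paper does not actually supply a proof of this lemma: it is stated as a ``folklore lemma that can be proved by Euler's formula'' and left at that. Your proposal follows precisely this standard route---pass to an arbitrary subgraph $H$, dispose of the acyclic case trivially, and otherwise combine Euler's formula with the inequality $2m\ge gf$---so there is nothing to compare beyond noting that your sketch is exactly the intended argument.

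One remark on the point you flag. The face-length inequality $\ell(F)\ge g$ for every face $F$ of a connected plane graph with finite girth $g$ is indeed true, but if you would rather not argue it directly you can sidestep it. Observe that if $H$ has a bridge $e$, then $H-e$ has the same vertex set and one fewer edge, so its average degree is smaller; iterating, you may replace $H$ by its bridgeless core and then by a single $2$-edge-connected component. In a $2$-edge-connected plane graph no boundary walk repeats an edge, hence every boundary walk is a closed trail and in particular contains a cycle of length at least $g$, giving $2m\ge gf$ cleanly. This avoids the block-decomposition subtlety you describe while keeping the rest of your computation unchanged.
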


Many results concerning planar graphs with large girths can be extended to general graphs with
small maximum average degrees and large girths. Strong chromatic index is no exception.

\begin{thm}[Wang and Zhao '15~\cite{WZ15}] \label{mad_WZ} Let $G$
be a graph with maximum degree $\Delta \ge 4$.
If the maximum average degree $\mad(G)<2+\frac{1}{3\Delta-2}$,
the even girth is at least $6$ and the odd girth is at least
$2\Delta-1$, then $\chi'_{s}(G)\leq2\Delta-1$.
\end{thm}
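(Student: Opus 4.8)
\medskip
\noindent\textbf{Proof plan.}
The plan is to use the discharging method on a minimal counterexample. Suppose the theorem fails for some $\Delta\ge4$, and among all graphs $H$ with $\Delta(H)\le\Delta$, $\mad(H)<2+\frac{1}{3\Delta-2}$, even girth at least $6$, odd girth at least $2\Delta-1$, and $\chi'_s(H)>2\Delta-1$, let $G$ minimize $|V(G)|+|E(G)|$. Since deleting a vertex or an edge never raises the maximum degree, never raises the maximum average degree (a maximum over subgraphs), and never shortens a cycle, every proper subgraph of $G$ inherits the hypotheses and so is strong $(2\Delta-1)$-edge-colorable. One checks at once that $G$ is connected, and since it has a vertex of degree $\Delta\ge4$ it is neither a path nor a cycle.

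Next I would assemble a list of reducible configurations; for each, the scheme is to locate it in $G$, delete one edge (or one pendant vertex), recolor the rest by minimality, and re-insert the deleted edge. Call a maximal path all of whose internal vertices have degree $2$ a \emph{thread}, and its number of internal $2$-vertices its \emph{order}. By plain counting one gets first that every thread has order at most $2$ (an interior edge of a thread of order $\ge3$ is within distance $2$ of at most $\Delta+2\le2\Delta-2$ other edges, hence re-insertable greedily), and that no $1$-vertex is adjacent to a $2$-vertex. The substantive reductions live near vertices of large degree — $\delta(G)\ge2$ in general, upper bounds on how many $2$-vertices a vertex of degree $d$ can be adjacent to, and finer restrictions at $\Delta$-vertices such as how many incident edges may start an order-$2$ thread — and it is in the \emph{tight} cases, where the deleted edge sees exactly $2\Delta-1$ colored edges so a greedy re-insertion can fail, that one must first recolor an edge at a high-degree vertex along a Kempe-type chain. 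This is where the girth enters: the even-girth bound ($\ge6$) makes the neighborhoods of the two ends of a thread disjoint and kills the $4$-cycles that would swell the local conflict graph, while the odd-girth bound ($\ge2\Delta-1$) rules out the short odd closed walks that would obstruct, by parity, the completion of the chain recoloring. The threshold $2\Delta-1$ is exactly right because an edge can be within distance $2$ of as many as $2\Delta-2$ others.

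With the structure in hand the discharging is light. Assign each vertex $v$ the charge $\mu(v)=\deg(v)-2-\frac{1}{3\Delta-2}$; applying the $\mad$ bound to $G$ itself gives
\[
  \sum_{v\in V(G)}\mu(v)=\sum_{v\in V(G)}\deg(v)-\Bigl(2+\tfrac{1}{3\Delta-2}\Bigr)|V(G)|<0 .
\]
The only vertices with negative charge are the $2$-vertices, each carrying $-\frac{1}{3\Delta-2}$ (the reductions confine any $1$-vertex to the neighborhood of a vertex of degree $\ge3$ and let it be absorbed by the same bookkeeping), while a vertex of degree $d\ge3$ carries $d-2-\frac{1}{3\Delta-2}\ge1-\frac{1}{3\Delta-2}>0$. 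The discharging rule: a vertex of degree $\ge3$ sends $\frac{1}{2(3\Delta-2)}$ along each incident edge that starts an order-$1$ thread, and $\frac{1}{3\Delta-2}$ to the $2$-vertex adjacent to it in each incident order-$2$ thread. Because a thread has order at most $2$, every $2$-vertex either lies in an order-$1$ thread between two vertices of degree $\ge3$, collecting $\frac{1}{2(3\Delta-2)}$ from each, or is adjacent to a vertex of degree $\ge3$ in an order-$2$ thread, collecting $\frac{1}{3\Delta-2}$; in either case it ends at charge $0$. A vertex $v$ of degree $d\ge3$ sends out at most $\frac{1}{3\Delta-2}$ per incident edge, so
\[
  \mu^{*}(v)\ \ge\ d-2-\tfrac{1}{3\Delta-2}-\tfrac{d}{3\Delta-2}\ =\ \frac{(d-2)(3\Delta-2)-(d+1)}{3\Delta-2}\ \ge\ 0
\]
for all $d\ge3$ and $\Delta\ge4$, the numerator being at least $9d-21\ge6$. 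Hence $\sum_v\mu^{*}(v)\ge0$, contradicting $\sum_v\mu(v)<0$, so no counterexample exists.

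The real obstacle is the reducibility of the tight configurations: when the deleted edge sees $2\Delta-1$ colored edges that already exhaust all $2\Delta-1$ colors, one must produce a nearby edge whose color can be switched — propagating the switch along a chain if needed — so as to drop the number of colors seen to $2\Delta-2$, and showing such a chain recoloring always terminates is exactly what the quantitative girth hypotheses (above all odd girth $\ge2\Delta-1$) are for. Since the discharging has essentially no slack, matching the constants in the reducible configurations to the density threshold $2+\frac{1}{3\Delta-2}$ is the other point demanding care.
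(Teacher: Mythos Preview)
This theorem is not proved in the present paper; it is quoted as a result of Wang and Zhao~\cite{WZ15} and used only as context. So there is no ``paper's own proof'' to compare against directly. For what it is worth, the method behind~\cite{WZ15} (and the method this paper uses for its own theorems) is \emph{not} a discharging argument of the kind you sketch. One strips the leaves from a minimal counterexample $G$ to obtain $H$ with $\delta(H)\ge 2$; the bound $\mad(G)<2+\frac{1}{3\Delta-2}=2+\frac{2}{3(2\Delta-1)-1}$ then forces $H$ to contain a $(2\Delta-1)$-thread (cf.\ Lemma~\ref{small_ad_path}); putting the leaves back on that thread yields a caterpillar in $G$, and an explicit argument (the ``odd graph'' technique, as the title of~\cite{WZ15} indicates) shows that any strong $(2\Delta-1)$-edge-coloring of $G$ minus the interior of the caterpillar extends over the caterpillar. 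The girth hypotheses are used only to handle the degenerate case $H\cong C_n$ and to ensure the caterpillar really is a tree.

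Your sketch has a genuine gap, and it is not in the place you flag. The only reducibility you actually \emph{establish} is ``no thread of order $\ge 3$'', and that reduction uses no girth hypothesis at all. Your discharging then yields $\sum_v\mu^{*}(v)\ge 0$ from this single structural fact --- so, if correct, it would prove $\chi'_s(G)\le 2\Delta-1$ for \emph{every} $G$ with $\mad(G)<2+\frac{1}{3\Delta-2}$ and $\Delta\ge 4$, regardless of girth. That is false: take $C_4$ with two pendant edges at each vertex. Then $\Delta=4$, the graph is unicyclic so $\mad=2<2+\frac{1}{10}$, yet by Proposition~\ref{cactus} its strong chromatic index is $\sigma+1=8>7=2\Delta-1$. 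The leak in your argument is the degree-$1$ vertices: each carries charge $-1-\frac{1}{3\Delta-2}$, your rules send nothing to them, and the parenthetical remark that they are ``absorbed by the same bookkeeping'' is not justified. A pendant edge $uv$ can conflict with as many as $(\Delta-1)\Delta$ edges, so leaves are \emph{not} greedily reducible, and you have given no alternative reduction for them. The Kempe-chain recolorings you allude to are neither stated precisely nor tied to specific configurations, so as it stands the discharging does not close.
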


In terms of maximum degree $\Delta$, the bound $2\Delta-1$ is best
possible. We seek for a better parameter as a refinement. Define
\[
\sigma(G):=\max_{xy\in E(G)}\{\deg(x)+\deg(y)-1\}.
\]

An {\em antimatching} is an edge set $S\subseteq E(G)$ in which
any two edges are at distance at most $2$, thus any strong edge-coloring
assigns distinct colors on $S$. Notice that each color set of a strong
edge-coloring is an induced matching, and the intersection of an induced
matching and an antimatching contains at most one edge. The fact suggests
a dual problem to strong edge-coloring: finding a maximum antimatching
of $G$, whose size is denoted by $\am(G)$. For any edge $xy\in E(G)$,
the edges incident with $xy$ form an antimatching of size $\deg(x)+\deg(y)-1$.
Together with the weak duality, this gives the inequality
\[
\chi_{s}'(G)\geq\am(G)\geq\sigma(G).
\]
By induction, we see that for any nontrivial tree $T$, $\chi'_{s}(T)=\sigma(T)$
attains the lower bound~\cite{FG+90}. Based on the intuition that
a planar graph with large girth locally looks like a tree, in this
paper, we focus on this class of graphs.
More precisely, we prove the following main theorem:

\begin{thm} \label{main}
If $G$ is a planar graph with $\sigma=\sigma(G)\geq 5$,
$\sigma\geq\Delta(G)+2$ and girth at least $5\sigma+16$,
then $\chi_{s}'(G)=\sigma$.
\end{thm}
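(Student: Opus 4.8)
The plan is to argue by contradiction via the discharging method, taking $G$ to be a minimal counterexample (minimizing, say, $|V(G)|+|E(G)|$) to the statement that $\chi_s'(G)=\sigma$. Since $\sigma\le\chi_s'(G)$ always holds, the real content is the upper bound $\chi_s'(G)\le\sigma$; so $G$ is a planar graph with $\sigma\ge 5$, $\sigma\ge\Delta+2$, girth at least $5\sigma+16$, which admits no strong $\sigma$-edge-coloring, but every proper subgraph does. First I would record easy structural consequences of minimality: $G$ is connected, has no vertices of degree $0$ or $1$ (a pendant edge can be deleted, recolored greedily since it sees at most $\deg(\text{its neighbor})-1+\Delta-1 \le \sigma-1$ forbidden colors — here the hypothesis $\sigma\ge\Delta+2$ is exactly what gives slack), and more generally has minimum degree $2$. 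The crux of the structural phase is to list a family of \emph{reducible configurations} — typically long paths of degree-$2$ vertices (``threads'') hanging off or joining vertices of higher degree, and certain local arrangements of two or three branch vertices connected by such threads — and to show each cannot occur in $G$, because deleting a well-chosen edge or contracting a thread yields a smaller graph whose strong $\sigma$-coloring extends back: the number of edges within distance $2$ of the removed edge is at most $\sigma-1$ by a short count using girth (large girth forces these neighborhoods to be tree-like, so no edge is ``seen twice'') and $\sigma\ge\Delta+2$.

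Next comes the discharging. Assign to each vertex $v$ the charge $\deg(v)-2$, so by Euler's formula and the girth bound $g\ge 5\sigma+16$ the total vertex charge plus face charge is negative; concretely, writing $\mathrm{mad}(G)<2+\frac{4}{g-2}$, one gets $\sum_v(\deg(v)-2)<\frac{4|V|}{g-2}$, which for $g\ge 5\sigma+16$ is a small positive quantity — so the cleanest setup is to charge each vertex $\deg(v)-2$ and each $\ell$-face $\ell-\frac{2g}{g-?}\cdots$; in practice I would use the standard face-free reformulation via $\mathrm{mad}$, giving each $2$-vertex charge $-\epsilon$-ish deficiency and each $k$-vertex ($k\ge 3$) a surplus, and redistribute so that threads ``pay'' the branch vertices they attach to. The discharging rules should push charge from branch vertices (which, having degree $\ge 3$ against an average just above $2$, carry positive charge) along threads to the degree-$2$ vertices, with the rule calibrated so that a branch vertex keeps nonnegative charge \emph{unless} it is incident to too many short threads — but that situation is exactly one of the reducible configurations already excluded. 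After discharging, every vertex has nonnegative charge, contradicting the negative total.

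The main obstacle is calibrating the reducible configurations and the discharging weights against each other so that the threshold $g\ge 5\sigma+16$ (rather than something larger) suffices. The delicate point is that a branch vertex $v$ of degree $d$ can afford to send charge down only finitely many threads before going negative, so one must show that if $v$ lies on ``many short threads'' then some local coloring argument — extending a strong $\sigma$-coloring of $G$ minus one thread — goes through; this extension argument is where the precise arithmetic $\sigma\ge\Delta+2$ and the length bound interact, since a thread of length $t$ contributes constraints spanning $2$ steps in each direction, and one needs $t$ large enough (hence girth large enough, as threads sit on cycles) that the two ends decouple. I expect the bulk of the work, and the reason for the constant $16$ and coefficient $5$, to be this bookkeeping: bounding, for each type of reducible configuration, the number of already-colored edges within distance $2$ of the edge or path being recolored by a quantity $\le\sigma-1$, uniformly, using only $g\ge 5\sigma+16$ and $\Delta\le\sigma-2$. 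Once the configuration list is shown to be simultaneously unavoidable (by discharging) and reducible (by coloring extension), the theorem follows.
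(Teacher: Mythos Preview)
Your argument has a genuine gap at the very first structural step. You claim that a pendant edge $xy$ (with $\deg(x)=1$) sees at most $\deg(y)-1+\Delta-1$ colors and so can be recolored greedily, but for \emph{strong} edge-coloring this count is wrong: the edge $xy$ is within distance~$2$ of every edge incident to \emph{every} neighbor of $y$, giving
\[
(\deg(y)-1)+\sum_{\substack{z\sim y\\ z\neq x}}(\deg(z)-1)
\]
forbidden colors, which can be of order $\Delta^{2}$ and is certainly not bounded by $\sigma-1$. So a minimal counterexample can (and in general will) have leaves; you cannot strip them off greedily, and the hypothesis $\sigma\ge\Delta+2$ gives no slack here. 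This matters, because the degree-$2$ threads you want to find as reducible configurations live in $G$ only after the leaves are understood, and your reducibility arguments all rest on a count of this same shape.

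The paper proceeds differently. It does not eliminate leaves; instead it shows, via a cut-edge reduction, that no spine vertex has all but one of its neighbors being leaves, and then passes to $H=G-\{\text{leaves}\}$, which now has $\delta(H)\ge 2$. A short structural lemma (no full discharging is needed) says that any planar graph with $\delta\ge 2$ and girth $\ge 5\ell+1$ contains an $\ell$-thread; with $\ell=\sigma+3$ this is exactly the girth bound $5\sigma+16$. The thread in $H$, together with its pendant leaves back in $G$, is a caterpillar $T$, and the heart of the proof --- which your proposal gestures at (``the two ends decouple'') but does not isolate --- is a two-sided precoloring lemma: if $T=\Cat(d_1,\dots,d_\ell)$ satisfies $\sigma(T)\ge 5$, $\sigma(T)\ge\Delta(T)+2$, and $\ell\ge\sigma+3$, then \emph{any} prescribed strong $\sigma$-coloring of the edges around the two endpoints of the spine extends to all of $T$. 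This is proved by induction on $\sigma$ (peeling off an independent set of $\sigma$-large spine vertices to drop $\sigma$ by one) with an explicit base case at $\sigma=5$, and it is where the constants $5$ and $16$ actually originate. Without such a lemma your reducibility claims for threads are unsupported: the obstruction to extending a coloring along a thread is precisely that the constraints imposed at its two ends may be incompatible, and resolving that is the real content of the theorem.
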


We also make refinement on the girth constraint and gain a stronger result
in Section~\ref{sec_refine}.

\medskip{}

The condition $\sigma\geq\Delta(G)+2$ is necessary as shown in the following example.
Suppose $n \ge 1$ and $d \ge 2$.
Construct $G_{3n+1,d}$ from the cycle $(x_1,x_2, \ldots, x_{3n+1})$
by adding $d-2$ leaves adjacent to each $x_{3i}$ for $1 \le i \le n$.
Then $\sigma(G_{3n+1,d})=d+1<d+2=\Delta(G_{3n+1,d})+2$. 
See Figure~\ref{fig:counter} for $G_{3n+1,4}$.

\begin{figure}[!ht]{
\centering
\includegraphics{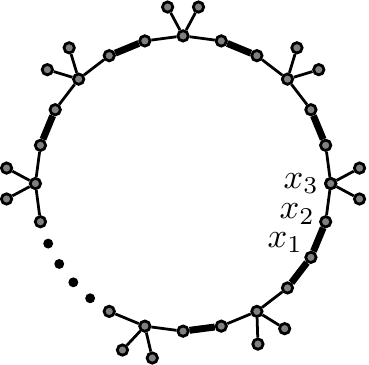}
\vskip -0.3cm
\caption{The graph $G_{3n+1,4}$.}
\label{fig:counter}
}
\end{figure}

We claim that $\sigma(G_{3n+1,d})<\chi'_s(G_{3n+1,d})$.
Suppose to the contrary that $\sigma(G_{3n+1,d})=\chi'_s(G_{3n+1,d})$.
For $1\le i \le n$, the $\sigma-1$ edges incident to $x_{3i}$,
together with $x_{3i-2} x_{3i-1}$ (or $x_{3i+1} x_{3i+2}$) use all the $\sigma$ colors,
implying that $x_{3i-2} x_{3i-1}$ uses the same color as $x_{3i+1} x_{3i+2}$,
where $x_{3n+2}=x_1$.
Therefore, $x_1x_2, x_4x_5, \ldots,x_{3n+1}x_{3n+2}$ all use the same color,
contradicting that $x_1x_2$ is adjacent to $x_{3n+1}x_1=x_{3n+1}x_{3n+2}$.

\section{The proof of the main theorem}

To prove the main theorem, we need two lemmas and a key lemma
(Lemma \ref{caterpillar}) to be verified in the next section.

The first lemma can be used to prove that any tree $T$ has
strong chromatic index $\sigma(T)$ by induction.

\begin{lem} \label{cut}
Suppose $x_{1}x_{2}$ is a cut edge of a graph $G$,
and $G_{i}$ is the component of $G-x_{1}x_{2}$
containing $x_{i}$ joining the edge $x_{1}x_{2}$ for $i=1,2$.
If for some integer
$k$, $\deg(x_{1})+\deg(x_{2})-1\leq k$ and $\chi_{s}'(G_{i})\leq k$ for
$i=1,2$, then $\chi_{s}'(G)\leq k$.
\end{lem}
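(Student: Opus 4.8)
The plan is to obtain a strong $k$-edge-coloring of $G$ by gluing together strong $k$-edge-colorings of $G_1$ and $G_2$ along the cut edge $x_1x_2$, after recoloring one side. The first step is to record what the cut edge forces. Since every path of $G$ between $V(G_1)\setminus\{x_2\}$ and $V(G_2)\setminus\{x_1\}$ must use $x_1x_2$, all edges of $G$ incident with $x_1$ already lie in $G_1$ and all edges incident with $x_2$ lie in $G_2$; hence $\deg_{G_i}(x_i)=\deg_G(x_i)$ while $x_{3-i}$ is a leaf of $G_i$. A short case check then shows that an edge $e\in E(G_1)$ and an edge $f\in E(G_2)$, neither equal to $x_1x_2$, lie at distance at most $2$ in $G$ only when $e$ is incident with $x_1$, $f$ is incident with $x_2$, and the edge $x_1x_2$ joins them. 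Equivalently, the only constraints spanning both sides are the mutual constraints among the $\deg_G(x_1)+\deg_G(x_2)-1$ edges incident with $x_1$ or $x_2$, which form an antimatching of that size.

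Next I would fix strong $k$-edge-colorings $c_1$ of $G_1$ and $c_2$ of $G_2$; permuting the colors of $c_2$ we may assume $c_2(x_1x_2)=c_1(x_1x_2)$, and we write $\alpha$ for this common color. Since the edges at $x_1$ pairwise conflict, $c_1$ assigns them $\deg_G(x_1)$ distinct colors; let $A$ be the set of colors it uses on the edges at $x_1$ other than $x_1x_2$, so $|A|=\deg_G(x_1)-1$ and $\alpha\notin A$. Likewise let $B$ be the set of $c_2$-colors on the edges at $x_2$ other than $x_1x_2$, with $|B|=\deg_G(x_2)-1$ and $\alpha\notin B$. Because $|A|+|B|=\deg_G(x_1)+\deg_G(x_2)-2\le k-1$, the set $\{1,\dots,k\}\setminus(A\cup\{\alpha\})$ has at least $|B|$ elements, so there is a permutation $\pi$ of $\{1,\dots,k\}$ that fixes $\alpha$ and satisfies $\pi(B)\cap A=\emptyset$. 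Recoloring $G_2$ by $\pi\circ c_2$ (still a strong $k$-edge-coloring, as $\pi$ is a bijection) keeps the color of $x_1x_2$ equal to $\alpha$, so setting $c:=c_1$ on $E(G_1)$ and $c:=\pi\circ c_2$ on $E(G_2)$ gives a well-defined edge-coloring of $G$ using at most $k$ colors.

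It then remains to verify that $c$ is strong. Two conflicting edges lying both in $G_1$ (respectively both in $G_2$) get distinct colors because $c_1$ (respectively $\pi\circ c_2$) is strong; and if one of the two edges is $x_1x_2$ itself, the other is incident with $x_1$ or $x_2$ and hence lies in the same $G_i$, so that case is already covered. By the structural observation, any remaining conflicting pair consists of an edge $e\neq x_1x_2$ at $x_1$ and an edge $f\neq x_1x_2$ at $x_2$, and then $c(e)=c_1(e)\in A$ while $c(f)=\pi(c_2(f))\in\pi(B)$, which is disjoint from $A$; hence $c(e)\neq c(f)$. This gives $\chi'_s(G)\le k$. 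The argument is essentially routine: the only points needing care are the case analysis identifying exactly which edges on opposite sides of $x_1x_2$ interact, and the counting $\deg(x_1)+\deg(x_2)-1\le k$, which is precisely what leaves enough free colors to build the permutation $\pi$.
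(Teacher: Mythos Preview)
Your proof is correct and follows essentially the same approach as the paper: take strong $k$-edge-colorings of $G_1$ and $G_2$, then permute the colors on one side so that the color of $x_1x_2$ agrees and the color sets used at $x_1$ and at $x_2$ are disjoint, which is possible precisely because $\deg(x_1)+\deg(x_2)-1\le k$. The paper states this more tersely (``we may assume $S_1$ and $S_2$ are disjoint and $f_1(x_1x_2)=f_2(x_1x_2)$''), while you explicitly construct the permutation $\pi$ and verify the cross-side constraints, but the content is the same.
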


\begin{proof}
Choose a strong $k$-edge-coloring $f_i$ of $G_{i}$ for $i=1,2$.
Let $E_{i}$ be the set of edges incident with $x_{i}$ in $G_{i}-x_{1}x_{2}$
and $S_{i}=f_{i}(E_{i})$.
Since $\deg(x_{1})+\deg(x_{2})-1\leq k$, we may assume
$S_1$ and $S_{2}$ are disjoint and $f_1(x_1x_2)=f_2(x_1x_2)$ is some
element $c \in \{1,2,\dots,k\}\backslash\left(S_1 \cup S_{2}\right)$.
Then
\[
f(e)=\begin{cases}
f_{1}(e), & \mbox{ if }e\in E(G_{1})-x_{1}x_{2};\\
f_{2}(e), & \mbox{ if }e\in E(G_{2})-x_{1}x_{2};\\
c,        & \mbox{ if }e=x_{1}x_{2}
\end{cases}
\]
is a strong $k$-edge-coloring of $G$.
\end{proof}

The following lemma from~\cite{NRS1997} about planar graphs is also useful
in the proof of the main theorem.
An $\ell$-thread is an induced path of $\ell+2$ vertices all of
whose internal vertices are of degree $2$ in the full graph.

\begin{lem} \label{p_lg_path}
Any planar graph $G$ with minimum degree at least $2$ and with girth at least
$5\ell+1$ contains an $\ell$-thread.
\end{lem}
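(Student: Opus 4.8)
The plan is to argue by contradiction. Suppose $G$ has minimum degree at least $2$ and girth $g\ge 5\ell+1$, yet contains no $\ell$-thread. Since an $\ell$-thread is just a path $v_0v_1\cdots v_{\ell+1}$ whose $\ell$ internal vertices have degree $2$, and since any such path can only have a chord joining its two endpoints (the internal vertices having degree $2$), a chord would produce a cycle of length $\ell+2<g$; hence the ``induced'' requirement is automatic and the obstruction is purely about consecutive degree-$2$ vertices. Thus the assumption says exactly that every maximal path of degree-$2$ vertices carries at most $\ell-1$ internal vertices, i.e.\ has length at most $\ell$ in $G$. The heart of the proof is to \emph{suppress} all degree-$2$ vertices and transport the girth condition to the resulting sparse graph, where planarity becomes incompatible with minimum degree $3$.

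First I would dispose of the degenerate case. If $G$ is $2$-regular it is a disjoint union of cycles, each of length at least $g>\ell+2$, and any $\ell+2$ consecutive vertices of such a cycle form an $\ell$-thread, a contradiction; the same remark shows no component can be a cycle. Hence the set $V_{\ge 3}$ of vertices of degree at least $3$ is nonempty. I would then form the planar multigraph $G'$ on vertex set $V_{\ge 3}$ by replacing each maximal thread joining two vertices of $V_{\ge 3}$ with a single edge. Smoothing degree-$2$ vertices preserves planarity and preserves $\deg(v)$ for every $v\in V_{\ge 3}$, so $G'$ is a planar multigraph of minimum degree at least $3$.

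The key step is to show that $G'$ has girth at least $6$. Each edge of $G'$ arises from a thread of length at most $\ell$, so a closed walk of $G'$ using $k$ edges lifts to a cycle of $G$ of length at most $k\ell$; since every cycle of $G$ has length at least $g\ge 5\ell+1$, such a $k$-cycle forces $k\ell\ge 5\ell+1$, that is $k\ge 6$. In particular a loop ($k=1$) would give a cycle of length at most $\ell<g$ and a pair of parallel edges ($k=2$) a cycle of length at most $2\ell<g$, so $G'$ is in fact simple, and no cycle of $G'$ has length $3,4$, or $5$. Thus $G'$ is a simple planar graph with minimum degree at least $3$ and girth at least $6$. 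Applying Euler's formula to each component and counting edge–face incidences gives $2m'\ge 6f'$, whence $m'\le\tfrac{3}{2}(n'-2)$, while minimum degree $3$ forces $2m'\ge 3n'$, i.e.\ $m'\ge\tfrac{3}{2}n'$; these are incompatible. This contradiction completes the proof.

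The delicate points are bookkeeping rather than conceptual. One must check that the smoothing is well defined, which is exactly why the $2$-regular components are removed first, and one must track the constant $5$: a naive direct count of $n$, $m$, and the number of degree-$2$ vertices only yields the weaker hypothesis $g>6\ell$, so the sharp threshold $g\ge 5\ell+1$ comes precisely from the lift of cycles from $G'$ to $G$. Indeed $5\ell+1$ is the smallest girth that rules out $5$-cycles in $G'$ (since $5\ell<5\ell+1$), and girth $6$ is the smallest girth incompatible with minimum degree $3$ in the plane. I expect the main care to go into this cycle-lifting argument and into phrasing the suppression cleanly when $G$ is disconnected.
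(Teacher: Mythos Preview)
Your proof is correct and follows essentially the same route as the paper: suppress the degree-$2$ vertices to obtain a planar (multi)graph $G'$ of minimum degree at least $3$, then use Euler's formula together with the cycle-lifting bound to force a contradiction with girth $\ge 6$. The paper phrases this directly (it shows $G'$ must contain a cycle of length at most $5$ and then pigeonholes one of its edges into a long thread), whereas you frame it by contradiction and are somewhat more careful about the $2$-regular components and the simplicity of $G'$, but the core argument is the same.
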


\begin{proof}
Contract all the vertices of degree 2 to obtain $G'$.
  Notice that $G'$ is a planar graph which may have multi-edges and may be disconnected.
Embed $G'=(V,E)$ in the plane as $P$. Then Euler's Theorem says that
$|V|-|E|+|F| \ge 2$, where $F$ is the set of faces of $P$. If $G'$ has girth larger
than $5$, we have
$2|E|=\sum_{f\in F} \deg (f)\geq 6|F|$. But that $G'$ has no vertices of degree 2
implies $2|E| = \sum_{v\in V} \deg (v) \geq 3|V|$. Combining all these produces a
contradiction:
\[
   2 \le |V|-|E|+|F| \leq \frac{2}{3}|E|-|E|+\frac{1}{3}|E| = 0.
\]
Hence $G'$ has a cycle of length at most 5. The corresponding cycle in
$G$ has length at least $5\ell+1$. Thus one of these edges in $G'$ is contracted from
$\ell$ vertices in $G$, and so $G$ has the required path.
\end{proof}

\medskip{}

These two lemmas, together with a key lemma to be verified in the next section,
lead to the following proof of the main theorem:

\medskip{}

\noindent
\textbf{Proof of Theorem \ref{main}.}
Since the inequality $\chi_{s}'(G)\geq\sigma(G)$ is trivial, it
suffices to show that $\chi_{s}'(G)\leq\sigma(G)$. That is, $G$ admits
a strong $\sigma$-edge-coloring $\varphi$.
Suppose to the contrary that there is a counterexample $G$
with minimum vertex number.
Then there is no vertex $x$ adjacent
to $\deg(x)-1$ vertices of degree 1. For otherwise, there is a cut edge $xy$, 
where $y$ is not a leaf. 
By applying Lemma~\ref{cut} 
to $G$ with the cut edge $xy$ and using the minimality
of $G$, we get a contradiction.

Consider $H=G-\{x\in V(G):\deg(x)=1\}$,
which clearly has the same
girth as $G$ since the deletion doesn't break any cycle.
And we have $\delta(H)\geq2$,
otherwise $G$ has a vertex $x$ adjacent to $\deg(x)-1$ vertices of degree 1,
which is impossible.
Lemma \ref{p_lg_path} claims that there
is a path $x_{0}x_{1}\dots x_{\ell+1}$ with $\ell=\sigma+3$
and $\deg_{H}(x_{i})=2$ for $i=1,2,\dots,\ell$.
Now let $G'$ be subgraph obtained from $G$ by deleting the leaf-neighbors of $x_{2},x_{3},\dots,x_{\ell-1}$
and the vertices $x_{3},x_{4},\dots,x_{\ell-2}$.
Consider the subgraph $T$ of $G$ induced by $x_{1},x_{2},\dots,x_{\ell}$
and their neighbors, which is a caterpillar tree.
By Lemma \ref{caterpillar} that will be
proved in the next section, $T$ admits a  strong $\sigma$-edge-coloring $\varphi'$ such that
$\varphi$ and $\varphi'$ coincides on the edges incident to $x_1$ and $x_\ell$. Gluing
these two edge-colorings we construct a strong $\sigma$-edge-coloring of $G$. \qed

\section{The key lemma: caterpillar with edge pre-coloring}

All the graphs in this section are caterpillar trees.
Let $d_i\geq 2$ for $i=1,2,\dots,\ell$.
By $T=\Cat(d_{1},d_{2},\dots,d_{\ell})$ we mean a
caterpillar tree with spine $x_{0},x_{1},\dots,x_{\ell+1}$, whose
degrees are $d_{0},d_1,\dots,d_{\ell+1}$, where $d_{0}=d_{\ell+1}=1$.
Call $\ell$ the length of $T$ and
let $E_{i}$ be the edges incident with $x_{i}$. See Figure~\ref{fig:caterpillar}
for Cat(5,3,2,4,5).
\begin{figure}[!ht]
  \centering
  \includegraphics{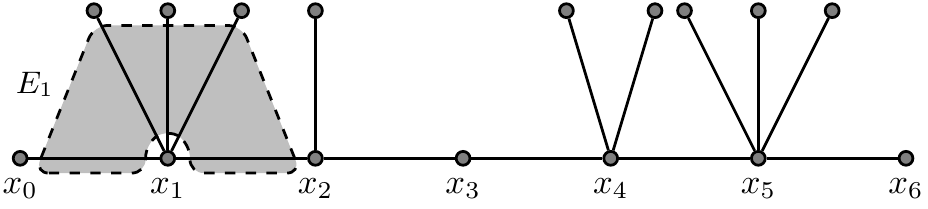}
  \vskip -0.4cm
  \caption{The caterpillar tree Cat(5,3,2,4,5).} \label{fig:caterpillar}
\end{figure}

Collect all the tuples $(C;\alpha_0,C_1,C_\ell,\alpha_\ell)$ as $\mathcal{P}_\kappa(T)$,
where the color sets $C_1, C_\ell\subseteq C$ with $|C_1|=d_1, |C_\ell|=d_\ell$, $|C|=\kappa$,
and $\alpha_0\in C_1, \alpha_\ell\in C_\ell$.
Fix $\kappa\in \mathbb{N}$. For any
$P=(C;\alpha_0,C_1,C_\ell,\alpha_\ell)\in \mathcal{P}_\kappa(T)$,
the set of all strong edge-colorings $\varphi$ using the colors in $C$ and
satisfying the following criterions
is denoted by $\mathcal{C}_T(P)$:
$$
  \varphi(E_1) = C_1, ~~~
  \varphi(E_\ell) = C_\ell, ~~~
  \varphi(x_0x_1)=\alpha_0 ~~~ {\rm and} ~~~
  \varphi(x_{\ell} x_{\ell+1})=\alpha_\ell.
$$

If $\mathcal{C}_T(P)$ is nonempty for any $P\in\mathcal{P}_\kappa(T)$ with $\kappa\geq\sigma(T)$,
then $T$ is called \emph{$\kappa$-two-sided strong edge-pre-colorable}.

\begin{lem} \label{kappaR}
If $T=\Cat(d_1,d_2,\dots,d_\ell)$ is $\kappa$-two-sided strong edge-pre-colorable,
then $T$ is $\kappa'$-two-sided strong edge-pre-colorable for any $\kappa'\geq \kappa$.
\end{lem}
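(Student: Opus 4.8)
The plan is to reduce the statement to the case $\kappa' = \kappa+1$ and then induct, so I will show: if $T$ is $\kappa$-two-sided strong edge-pre-colorable and $\kappa \ge \sigma(T)$, then $T$ is $(\kappa+1)$-two-sided strong edge-pre-colorable. Take any prescription $P=(C;\alpha_0,C_1,C_\ell,\alpha_\ell)\in\mathcal{P}_{\kappa+1}(T)$, so $|C|=\kappa+1$. The idea is to ``hide'' one color: pick a color $\gamma\in C\setminus(C_1\cup C_\ell)$ if one exists, form $C^- = C\setminus\{\gamma\}$ (a palette of size $\kappa$), note that $P^-=(C^-;\alpha_0,C_1,C_\ell,\alpha_\ell)\in\mathcal{P}_\kappa(T)$ since $C_1,C_\ell\subseteq C^-$ and the cardinalities $|C_1|=d_1\le \sigma(T)\le\kappa$, $|C_\ell|=d_\ell\le\kappa$ are unaffected, and then use $\kappa$-pre-colorability to obtain $\varphi\in\mathcal{C}_T(P^-)\subseteq\mathcal{C}_T(P)$, which proves the claim.

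The step that needs care is exactly the existence of an unused color $\gamma$. If $C_1\cup C_\ell = C$ then every color of the palette appears on $E_1$ or $E_\ell$ and I cannot simply delete one. The fix is to instead delete a color that appears on $E_1$ but is \emph{not} $\alpha_0$ and does not appear on $E_\ell$, and re-introduce it by recoloring. Concretely: since $|C_1|+|C_\ell| = d_1 + d_\ell \ge |C| = \kappa+1 \ge \sigma(T)+1 \ge d_1+d_\ell$ would force near-equality, a cleaner route is the following. Choose any $\beta\in C_1\setminus\{\alpha_0\}$; such a $\beta$ exists because $d_1 = |C_1|\ge 2$. Let $e$ be the unique edge of $E_1$ with $\varphi'(e)=\beta$ in a coloring still to be built; the plan is to first solve the problem with the modified prescription $P^- = (C^-;\alpha_0,C_1^-,C_\ell,\alpha_\ell)$ where $C^- = C\setminus\{\gamma\}$ for a suitable $\gamma$ and $C_1^- = (C_1\setminus\{\gamma\})\cup\{\text{something in }C^-\}$ of the right size, obtain a coloring on $\kappa$ colors, and then flip the color of a single pendant edge at $x_1$ (or $x_\ell$) from its current value to $\gamma$. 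Because $\gamma$ is a brand-new color, changing one pendant edge at the end to $\gamma$ never creates a conflict, so this surgery is always legal — the constraint to check is only that the resulting color set on $E_1$ is exactly the prescribed $C_1$.

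So the clean argument is a short case analysis. Case 1: $C_1\cup C_\ell \subsetneq C$. Pick $\gamma\in C\setminus(C_1\cup C_\ell)$, set $C^-=C\setminus\{\gamma\}$, apply $\kappa$-pre-colorability to $(C^-;\alpha_0,C_1,C_\ell,\alpha_\ell)$, done. Case 2: $C_1\cup C_\ell = C$. Then since $|C_1|+|C_\ell| = d_1+d_\ell$ and $|C_1\cup C_\ell|=\kappa+1\ge\sigma(T)+1$, we have $|C_1\cap C_\ell| = d_1+d_\ell-(\kappa+1)\le d_1+d_\ell-\sigma(T)-1 = d_1+d_\ell - \max\{d_1+d_\ell-1\}-1 \le -1$ when $x_0x_1$ and $x_\ell x_{\ell+1}$... more simply, $\sigma(T)\ge d_1+d_\ell-1$ is false in general, so I instead just pick $\gamma\in C_1\setminus C_\ell$ (nonempty, else $C_1\subseteq C_\ell$ and $C=C_\ell$ forces $d_\ell=\kappa+1>\kappa\ge\sigma(T)\ge d_\ell$, a contradiction) with $\gamma\ne\alpha_0$ if possible; set $C^- = C\setminus\{\gamma\}$ and $C_1^- = (C_1\setminus\{\gamma\})\cup\{\delta\}$ for some $\delta\in C^-\setminus C_1$, which is nonempty since $|C^-|=\kappa\ge\sigma(T)\ge d_1=|C_1\setminus\{\gamma\}|+1$. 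Apply $\kappa$-pre-colorability to $(C^-;\alpha_0,C_1^-,C_\ell,\alpha_\ell)$ to get $\varphi^-$, then recolor the unique edge of $E_1$ currently colored $\delta$ with $\gamma$; since $\gamma\notin C^-$ this edge now carries a color used nowhere else, so no distance-$\le 2$ conflict arises, and the color set on $E_1$ becomes $C_1$, as required. The only subtlety, which I expect to be the main obstacle to getting the cardinality bookkeeping exactly right, is verifying that one can always find the swap-out color $\gamma\ne\alpha_0$ and the swap-in color $\delta$ with all the set sizes matching — this is where the hypotheses $d_1,d_\ell\ge 2$ and $\kappa\ge\sigma(T)$ are used — and that the single-edge recoloring at a pendant vertex is genuinely conflict-free, which holds precisely because pendant edges at $x_1$ have no forced interaction with anything except the other edges of $E_1$ and the edges of $E_2$.
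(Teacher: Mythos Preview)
Your approach is essentially the paper's: shrink the palette, apply the $\kappa$-hypothesis, then repair the boundary by recoloring with the removed color. The paper does it in one shot from $\kappa'$ down to $\kappa$ and recolors at the $E_\ell$ end (possibly several pendant edges at once), while you induct one color at a time and recolor a single edge at the $E_1$ end; these are symmetric variations of the same idea.

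Two points of rigor you flagged but did not finish. First, in Case~2 you must actually have $\gamma\in C_1\setminus C_\ell$ with $\gamma\ne\alpha_0$: this holds because $|C_1\setminus C_\ell|=|C|-|C_\ell|=(\kappa+1)-d_\ell\ge 2$, using $\kappa\ge\sigma(T)\ge d_{\ell-1}+d_\ell-1\ge d_\ell+1$. Without this your modified prescription would lose $\alpha_0$ from $C_1^-$ and be invalid. Second, your justification for the recoloring talks about ``pendant edges at $x_1$'', but the $\delta$-colored edge in $E_1$ may well be the spine edge $x_1x_2$; the correct reason the swap is conflict-free is simply that $\gamma\notin C^-$ appears nowhere in $\varphi^-$, so assigning it to \emph{any} single edge is safe, and the resulting color set on $E_1$ is exactly $C_1$. (For $\ell=2$ this would also change $\varphi(E_\ell)$, but in that case the hypothesis of the lemma is vacuous, so no harm.)
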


\begin{proof}
For any $P'=(C';\alpha'_0,C'_1,C'_\ell,\alpha'_\ell)\in \mathcal{P}_{\kappa'}(T)$,
we have to find a strong edge-coloring in
$\mathcal{C}_T(P')$.

\smallskip
\textbf{Case $|C'_1\cup C'_\ell|\leq \kappa$.}
Choose a $\kappa$-set $C$ so that
$C'_1\cup C'_\ell\subseteq C\subseteq C'$. By assumption, there is a strong edge-coloring in
$\mathcal{C}_T(C;\alpha'_0,C'_1,C'_\ell,\alpha'_\ell)\subseteq \mathcal{C}_T(P')$.

\smallskip
\textbf{Case $|C'_1\cup C'_\ell|> \kappa$.}
Choose a $\kappa$-set $C$ so that $C'_1\cup \{\alpha'_\ell\} \subseteq C\subseteq C'_1\cup C'_\ell$,
and a $d_\ell$-set $C_\ell$ so that $C'_\ell \cap C \subseteq C_\ell \subseteq C$.
By assumption, there is a strong edge-coloring $\varphi$ in $\mathcal{C}_T(C;\alpha'_0,C'_1,C_\ell,\alpha'_\ell)$.
Let the edges in $E_\ell$ with color $C_\ell-C'_\ell$ be $E'_\ell$.
Notice $C'_\ell-C_\ell$ and $C$ are disjoint, so the colors in $C'_\ell-C_\ell$ are not appeared in $\varphi$.
Hence we can change the colors of $E'_\ell$
to $C'_\ell-C_\ell$ and obtain a strong edge-coloring in $\mathcal{C}_T(P')$.
\end{proof}

We now derive a series of properties regarding the two-sided strong edge-pre-colorability
of a caterpillar tree and its certain subtrees.

\begin{lem} \label{subgraphR}
Suppose a caterpillar tree $\widetilde{T}$ contains $T$ as a subgraph, and
both have the same length.
If $\widetilde{T}$ is $\kappa$-two-sided strong edge-pre-colorable,
then $T$ is also $\kappa$-two-sided strong edge-pre-colorable.
\end{lem}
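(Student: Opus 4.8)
The plan is, assuming $\widetilde T$ is $\kappa$-two-sided strong edge-pre-colorable, to produce for each $P=(C;\alpha_0,C_1,C_\ell,\alpha_\ell)\in\mathcal P_\kappa(T)$ a coloring in $\mathcal C_T(P)$. First note that the hypothesis $\kappa\ge\sigma(T)$ comes for free: any edge $xy$ of $T$ is also an edge of $\widetilde T$ and satisfies $\deg_T(x)+\deg_T(y)-1\le\deg_{\widetilde T}(x)+\deg_{\widetilde T}(y)-1\le\sigma(\widetilde T)\le\kappa$, so $\sigma(T)\le\sigma(\widetilde T)\le\kappa$. Since $T$ and $\widetilde T$ are caterpillars of the same length, after identifying spines we may write $\widetilde T=\Cat(\widetilde d_1,\dots,\widetilde d_\ell)$ and $T=\Cat(d_1,\dots,d_\ell)$ on a common spine $x_0,x_1,\dots,x_{\ell+1}$ with $d_i\le\widetilde d_i$ for every $i$. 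Arguing by induction on $|E(\widetilde T)|-|E(T)|$, it suffices to treat the case in which $\widetilde T$ is obtained from $T$ by adding a single leaf edge $ux_i$ at a spine vertex $x_i$, $1\le i\le\ell$.

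If $2\le i\le\ell-1$, the sets $E_1$ and $E_\ell$ are the same in $T$ and $\widetilde T$, so $P\in\mathcal P_\kappa(\widetilde T)$; picking $\widetilde\varphi\in\mathcal C_{\widetilde T}(P)$ and restricting it to $E(T)$ yields the desired coloring, since a restriction of a strong edge-coloring to a subgraph is again a strong edge-coloring. So assume $i=1$, the case $i=\ell$ being symmetric. Choose $\delta\in C\setminus C_1$; this is possible because $|C_1|=d_1<\widetilde d_1\le\sigma(\widetilde T)\le\kappa$. The key move is to force $\delta$ onto the edge $ux_1$, so that deleting $u$ automatically removes $\delta$ from the colors seen at $x_1$. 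To this end, regard $u$ as the first spine vertex of $\widetilde T$, which is legitimate because relabeling which leaf at $x_1$ is the spine endpoint is a graph automorphism carrying $\mathcal P_\kappa$ and the sets $\mathcal C_{\widetilde T}(\cdot)$ bijectively, so $\widetilde T$ is $\kappa$-two-sided strong edge-pre-colorable with respect to this spine too. Applying this to the tuple $(C;\delta,C_1\cup\{\delta\},C_\ell,\alpha_\ell)\in\mathcal P_\kappa(\widetilde T)$ produces a strong edge-coloring $\widetilde\varphi$ with $\widetilde\varphi(ux_1)=\delta$ and $\widetilde\varphi(E_1)=C_1\cup\{\delta\}$, where $E_1$ is the edge set at $x_1$ in $\widetilde T$; its $d_1+1$ edges are pairwise at distance at most two, hence receive all $d_1+1$ colors of $C_1\cup\{\delta\}$, so $\delta$ appears only on $ux_1$. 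Therefore $\varphi:=\widetilde\varphi|_{E(T)}$ is a strong edge-coloring of $T$ that maps the edge set at $x_1$ in $T$ exactly onto $C_1$, with $\varphi(E_\ell)=C_\ell$ (the set $E_\ell$ being common to $T$ and $\widetilde T$) and $\varphi(x_\ell x_{\ell+1})=\alpha_\ell$.

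It remains to realize $\alpha_0$: we need $\alpha_0$ to be the color of some leaf edge at $x_1$ in $T$, for then we may designate that leaf as $x_0$ and conclude $\varphi\in\mathcal C_T(P)$. Since $\alpha_0\in C_1$, the only obstruction is $\varphi(x_1x_2)=\alpha_0$. In that case we recolor: the $d_1-1$ leaf edges at $x_1$ in $T$ all have the same set of edges at distance at most two, so their colors, which together make up $C_1\setminus\{\alpha_0\}$, may be permuted arbitrarily. Pick $\gamma\in C_1\setminus\{\alpha_0\}$ not occurring on the $d_3-1$ edges of $E_3\setminus E_2$, put $\gamma$ on a leaf edge $x_1v$, and interchange the colors of $x_1x_2$ and $x_1v$. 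A direct verification shows this remains a strong edge-coloring — the only edge pair that could clash is $x_1x_2$, now colored $\gamma$, against an edge of $E_3\setminus E_2$, which the choice of $\gamma$ rules out — and now $\alpha_0$ sits on the leaf edge $x_1v$, completing the proof. The main obstacle is precisely the existence of such a $\gamma$: this is immediate when $d_1>\deg(x_3)$, but in general — and particularly in degenerate configurations where $d_1$ barely exceeds $\deg(x_3)$, or $\kappa$ only slightly exceeds $\sigma(\widetilde T)$ — one must work harder, either iterating the recoloring one step further down the spine or checking that pre-colorability of $\widetilde T$ already forbids those configurations. The induction, the internal-vertex case, and the restriction argument are all routine.
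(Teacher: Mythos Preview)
Your proof is incomplete, and you say so yourself: in the case $\varphi(x_1x_2)=\alpha_0$ you need a color $\gamma\in C_1\setminus\{\alpha_0\}$ that avoids $\varphi(E_3\setminus E_2)$, and when $d_1\le d_3$ there is no reason such a $\gamma$ exists. Concretely, if $d_1=2$ then $C_1\setminus\{\alpha_0\}=\{\gamma_0\}$ is a singleton, and nothing you have arranged prevents $\gamma_0$ from appearing on a pendant edge at $x_3$; the swap is then blocked. The remedies you gesture at---pushing the recoloring further along the spine, or invoking unspecified structural consequences of the pre-colorability of $\widetilde T$---are not carried out, and it is not at all clear that an iterated swap terminates before it reaches $x_\ell$ and disturbs $C_\ell$ or $\alpha_\ell$.

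The paper's argument is much shorter and sidesteps your whole apparatus: given $(C;\alpha_0,C_1,C_\ell,\alpha_\ell)\in\mathcal P_\kappa(T)$, one simply enlarges $C_1\subseteq C'_1$ and $C_\ell\subseteq C'_\ell$ to sets of sizes $\widetilde d_1,\widetilde d_\ell$ inside $C$, applies $\kappa$-pre-colorability of $\widetilde T$ to $(C;\alpha_0,C'_1,C'_\ell,\alpha_\ell)$, and restricts to $T$. There is no induction and no spine relabeling; since $\alpha_0$ is pinned to $x_0x_1$ from the outset, the obstruction $\varphi(x_1x_2)=\alpha_0$ that you manufacture by renaming $u$ as $x_0$ cannot occur. (The paper's restriction step is itself terse---one still uses the interchangeability of pendant leaves at $x_1$ and $x_\ell$ to land exactly on $C_1$ and $C_\ell$---but your longer route adds complication without resolving the underlying issue.)
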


\begin{proof}
  Suppose $(C;\alpha_0,C_1,C_\ell,\alpha_\ell)\in \mathcal{P}_\kappa(T)$. We find
  $(C;\alpha_0,C'_1,C'_\ell,\alpha_\ell)\in \mathcal{P}_\kappa(\widetilde{T})$ such that
  $C'_1\supseteq C_1$ and $C'_\ell\supseteq C_\ell$.
  The lemma follows that
  any $\varphi'\in \mathcal{C}_{\widetilde{T}}(C;\alpha_0,C'_1,C'_\ell,\alpha_\ell)$ has a
  restriction $\varphi$ on $T$ so that $\varphi$
  is a strong edge-coloring in $\mathcal{C}_T(C;\alpha_0,C_1,C_\ell,\alpha_\ell)$.
\end{proof}

For $T=\Cat(d_1,d_2,\dots,d_\ell)$,
let $T_{-1}$ be the subtree $\Cat(d_{1},d_{2},\dots,d_{\ell-1})$.

\begin{lem} \label{lenR}
For $T=\Cat(d_1,d_2,\dots,d_\ell)$,
if $T_{-1}$ is $\kappa$-two-sided strong edge-pre-colorable,
where $\kappa\geq \sigma(T)$,
then so is $T$.
\end{lem}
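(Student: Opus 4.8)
The plan is to reduce the $\kappa$-two-sided strong edge-pre-colorability of $T=\Cat(d_1,\dots,d_\ell)$ to that of $T_{-1}=\Cat(d_1,\dots,d_{\ell-1})$, which we have by hypothesis. Fix $P=(C;\alpha_0,C_1,C_\ell,\alpha_\ell)\in\mathcal P_\kappa(T)$; I must produce a strong edge-coloring $\varphi\in\mathcal C_T(P)$. The idea is to peel off the last spine-vertex $x_\ell$: a strong coloring of $T$ restricts to a strong coloring of $T_{-1}$ (on the spine $x_0,\dots,x_{\ell-1},x_\ell$, where now $x_\ell$ plays the role of the final leaf-like endpoint $x_{(\ell-1)+1}$ of $T_{-1}$), and conversely, given a good coloring of $T_{-1}$ we just need to colour the remaining edges at $x_\ell$ — namely $E_\ell\setminus\{x_{\ell-1}x_\ell\}$, which are $d_\ell-1$ edges (the edge $x_\ell x_{\ell+1}$ plus the $d_\ell-2$ leaves at $x_\ell$) — avoiding the colours already forced in the neighbourhood.

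Concretely, first I would choose the data to feed into $T_{-1}$. The endpoint colour on the $x_{\ell-1}$ side of $T_{-1}$ should be $\alpha_0$ on the left as before, and on the right the "final edge colour" of $T_{-1}$ is $\varphi(x_{\ell-1}x_\ell)$: I must pick this so that it lies in $C_\ell$ (since $x_{\ell-1}x_\ell\in E_\ell$) and is compatible with extending. The colour set $C_{\ell-1}=\varphi(E_{\ell-1})$ is a $d_{\ell-1}$-subset of $C$ that I get to choose; it must contain $\varphi(x_{\ell-1}x_\ell)$ and be disjoint from nothing problematic. The key counting: the edge $x_{\ell-1}x_\ell$ sees, at distance $\le 2$, the other $d_{\ell-1}-1$ edges at $x_{\ell-1}$ and the other $d_\ell-1$ edges at $x_\ell$, a total of $d_{\ell-1}+d_\ell-2\le\sigma(T)-1\le\kappa-1$ forbidden colours, so there is room. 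After running the pre-colourability of $T_{-1}$ to get $\varphi$ on $E(T_{-1})$, I extend to $E_\ell$: the $d_\ell-1$ uncoloured edges at $x_\ell$ must get the $d_\ell-1$ colours in $C_\ell\setminus\{\varphi(x_{\ell-1}x_\ell)\}$, with the one edge $x_\ell x_{\ell+1}$ receiving the prescribed $\alpha_\ell$; I assign $\alpha_\ell$ to $x_\ell x_{\ell+1}$ and distribute the remaining $d_\ell-2$ colours of $C_\ell\setminus\{\varphi(x_{\ell-1}x_\ell),\alpha_\ell\}$ arbitrarily among the $d_\ell-2$ leaf-edges at $x_\ell$. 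Since the only edges within distance $2$ of an edge at $x_\ell$ other than the ones we just handled are the edges at $x_{\ell-1}$, and those use colours $C_{\ell-1}$, I need $C_\ell\setminus\{\varphi(x_{\ell-1}x_\ell)\}$ disjoint from $C_{\ell-1}\setminus\{\varphi(x_{\ell-1}x_\ell)\}$; this is exactly the requirement I can meet when choosing $C_{\ell-1}$, using $|C_{\ell-1}|+|C_\ell|-1=d_{\ell-1}+d_\ell-1\le\sigma(T)\le\kappa$.

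The delicate point, and the one I would write out carefully, is the simultaneous choice of $\varphi(x_{\ell-1}x_\ell)\in C_\ell$ and of the set $C_{\ell-1}\subseteq C$ so that \emph{all} constraints hold at once: $|C_{\ell-1}|=d_{\ell-1}$, $\varphi(x_{\ell-1}x_\ell)\in C_{\ell-1}\cap C_\ell$, $C_{\ell-1}\cap C_\ell=\{\varphi(x_{\ell-1}x_\ell)\}$ (so that the two edge-stars at $x_{\ell-1}$ and $x_\ell$ are "properly meshed"), and — if $\ell-1=1$ — the left-endpoint conditions $\alpha_0\in C_{\ell-1}$ are still satisfiable; one should check the two small base-position cases ($\ell=2$, where $T_{-1}$ is essentially a star, and whether $d_{\ell-1}$ and $d_\ell$ are close to $\sigma$) do not overconstrain. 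Once the colour bookkeeping is set up, verifying that the glued map $\varphi$ is genuinely a strong edge-coloring of $T$ is routine: a pair of edges at distance $\le 2$ in $T$ either both lie in $E(T_{-1})$ (fine, $\varphi$ restricted there is strong by construction), or both involve only vertices among $x_{\ell-1},x_\ell,x_{\ell+1}$ and their leaves (fine, by the explicit assignment and the disjointness $C_{\ell-1}\cap C_\ell=\{\varphi(x_{\ell-1}x_\ell)\}$), and there is no distance-$\le 2$ pair straddling further apart because the spine is a path. Finally $\varphi(E_1)=C_1$, $\varphi(x_0x_1)=\alpha_0$ come from the $T_{-1}$ coloring, and $\varphi(E_\ell)=C_\ell$, $\varphi(x_\ell x_{\ell+1})=\alpha_\ell$ come from the extension, so $\varphi\in\mathcal C_T(P)$, completing the induction step. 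I expect the main obstacle to be precisely the case analysis ensuring the meshing set $C_{\ell-1}$ exists when $d_{\ell-1}+d_\ell-1$ is close to $\kappa$; Lemma~\ref{kappaR} lets me assume $\kappa=\sigma(T)$ without loss of generality, which tightens but also clarifies that analysis.
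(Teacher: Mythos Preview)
Your proposal is correct and follows essentially the same approach as the paper's proof: pick $\alpha_{\ell-1}\in C_\ell\setminus\{\alpha_\ell\}$, choose a $d_{\ell-1}$-subset $C_{\ell-1}\subseteq C$ with $C_{\ell-1}\cap C_\ell=\{\alpha_{\ell-1}\}$ (possible because $d_{\ell-1}+d_\ell-1\le\sigma(T)\le\kappa$), invoke the hypothesis on $T_{-1}$ with data $(C;\alpha_0,C_1,C_{\ell-1},\alpha_{\ell-1})$, and then colour the remaining edges at $x_\ell$ with $C_\ell\setminus\{\alpha_{\ell-1}\}$ so that $x_\ell x_{\ell+1}$ gets $\alpha_\ell$. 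The paper disposes of this in four lines; your extra worries about the case $\ell=2$ and about whether $C_{\ell-1}$ can be chosen consistently with $C_1$ are unnecessary here, since the hypothesis that $T_{-1}$ is $\kappa$-two-sided strong edge-pre-colorable already absorbs any such constraint (and in the paper's applications $\ell$ is always large), and you should state explicitly that $\alpha_{\ell-1}\neq\alpha_\ell$ rather than leaving it implicit in the count $|C_\ell\setminus\{\alpha_{\ell-1},\alpha_\ell\}|=d_\ell-2$.
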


\begin{proof}
For any $P=(C;\alpha_0,C_1,C_\ell,\alpha_\ell)\in \mathcal{P}_\kappa(T)$,
pick $\alpha_{\ell-1} \in C_\ell-\alpha_\ell$
and $C_{\ell-1}$ a $d_{\ell-1}$-subset of $C$ with
$C_{\ell-1} \cap C_\ell =\{\alpha_{\ell-1}\}$.
Notice that $C_{\ell-1}$ can be chosen since $d_{\ell-1}+d_{\ell}-1\leq \sigma(T) \leq \kappa$.
By the assumption, $T_{-1}$ admits a strong $\kappa$-edge-coloring
$\varphi\in\mathcal{C}_{T_{-1}}(C;\alpha_0,C_1,C_{\ell-1},\alpha_{\ell-1})$.
Coloring the remaining edges with $C_\ell-\alpha_{\ell-1}$ so that $x_{\ell}x_{\ell+1}$
has color $\alpha_\ell$
results in a
strong $\kappa$-edge-coloring in $\mathcal{C}_T(P)$.
\end{proof}

Hereafter, if necessary we reverse the order to view
$T=\Cat(d_{\ell},d_{\ell-1},\ldots,d_1)$ so that
we can always assume $\sigma(T_{-1})=\sigma(T)$.
Hence the requirement $\kappa\geq \sigma(T)$ in Lemma~\ref{lenR}
automatically holds.

For a caterpillar tree $T$, we define $T'$ and $I_T$ as follows.
Call a vertex $x_i$ \emph{$\sigma$-large} if $d_i\geq d^* := \lceil\frac{\sigma+1}{2}\rceil$.
The value $d^*$ is critical in the sense that
\begin{enumerate}
  \item If $d_i+d_j\leq\sigma+1$, then either $d_i$ or $d_j$ must be at most $d^*$.
  \item If $d_i+d_j\geq\sigma+1$, then either $d_i$ or $d_j$ must be at least $d^*$.
\end{enumerate}

Let $S=\{x_i:i\in I_T\}$ be the set of all $\sigma$-large vertices,
except that if there exist $i<j$
with $d_{i-1}<d^*$, $d_i=d_{i+1}=\ldots=d_j=d^*$ and $d_{j+1}<d^*$,
we only take $x_i,x_{i+2},x_{i+4},\ldots$ till $x_j$ or $x_{j-1}$,
depending on the parity. Then $S$ is a nonempty independent set.
Consider a new
degree sequence $d'_1,d'_2,\dots,d'_{\ell}$ where
\[
  d'_i=\begin{cases}
         d_i-1, &\text{ if } i\in I_T;\\
         d_i,   &\text{ if } i\notin I_T.
       \end{cases}
\]
Then $T' = \Cat(d'_{1},d'_{2},\dots,d'_{\ell})$ is
a caterpillar tree isomorphic to a subgraph of $T$,
with $\sigma(T')=\sigma(T)-1$ due to
the criticalness of $d^*$ and the choice method of $S$.

It is straightforward to see that
$T'_{-1} = \Cat(d'_{1},d'_{2},\dots,d'_{\ell-1})$ by the choice method of $S$.

\begin{lem} \label{caterpillar_induct}
For $T=\Cat(d_1,d_2,\dots,d_\ell)$,
suppose $\sigma(T)\geq 6$ and $T'_{-1}$ is $(\kappa-1)$-two-sided strong edge-pre-colorable,
then $T$ is $\kappa$-two-sided strong edge-pre-colorable.
\end{lem}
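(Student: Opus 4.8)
The plan is to deduce $\kappa$-two-sided strong edge-pre-colorability of $T$ from that of $T'_{-1}$ in two stages, mirroring the two auxiliary operations already set up. First I would pass from $T'_{-1}$ to $T'$: since $T'_{-1}=\Cat(d'_1,\dots,d'_{\ell-1})$ and (after possibly reversing the order, as licensed by the remark preceding the lemma) $\sigma(T'_{-1})=\sigma(T')=\sigma(T)-1\le\kappa-1$, Lemma~\ref{lenR} applied with parameter $\kappa-1$ shows that $T'$ is $(\kappa-1)$-two-sided strong edge-pre-colorable. Next I would bridge the gap between $T'$ and $T$, which differ exactly on the spine degrees $d_i$ with $i\in I_T$, each of which was decremented by one; the number of colors must simultaneously go up by one. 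This is the genuine content of the lemma and the step I expect to be the main obstacle.

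For that step, take any $P=(C;\alpha_0,C_1,C_\ell,\alpha_\ell)\in\mathcal P_\kappa(T)$ with $|C|=\kappa$. The idea is to shrink $C_1$ and $C_\ell$ by one color each (removing a color different from $\alpha_0$, resp.\ $\alpha_\ell$, which is possible since $d_1,d_\ell\ge 2$) to get $C_1^\circ,C_\ell^\circ$, delete one further color to obtain a $(\kappa-1)$-set $C^\circ\subseteq C$ containing $C_1^\circ\cup C_\ell^\circ\cup\{\alpha_0,\alpha_\ell\}$ — here I must check $\kappa-1$ is large enough to accommodate $|C_1^\circ\cup C_\ell^\circ|$, which it is because $d_1-1+d_\ell-1\le\sigma(T)-1=\sigma(T')\le\kappa-1$ (and if $x_1$ or $x_\ell$ is not $\sigma$-large the corresponding set is not even shrunk, so the bound is only easier). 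Invoke $(\kappa-1)$-two-sided strong edge-pre-colorability of $T'$ to obtain $\varphi'\in\mathcal C_{T'}(C^\circ;\alpha_0,C_1^\circ,C_\ell^\circ,\alpha_\ell)$. Then I would extend $\varphi'$ to a strong edge-coloring $\varphi$ of $T$ by inserting, at each $\sigma$-large spine vertex $x_i$ with $i\in I_T$, one new pendant edge and assigning it a fresh color; because $S=\{x_i:i\in I_T\}$ is an independent set (as noted when $S$ was constructed), the new edges are pairwise at distance $\ge 3$ when the threads between consecutive $\sigma$-large vertices are long enough — but caterpillars have spine neighbors at distance $2$, so independence alone is not sufficient and I will have to argue carefully that a fresh color (one of the $\kappa-(\kappa-1)=1$ colors freed up, reused cyclically, together with recoloring flexibility) can always be placed.

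The careful version of the extension: at a $\sigma$-large $x_i$, the edges at distance $\le 2$ from the new pendant edge are the other edges of $E_i$ (already colored with the $d_i-1$ colors of $\varphi'$ at $x_i$), the edges of $E_{i-1}$ and $E_{i+1}$, and the spine edges $x_{i-2}x_{i-1}$, $x_{i+1}x_{i+2}$. The total number of forbidden colors is at most $(d_i-1)+(d_{i-1}-1)+(d_{i+1}-1)+2 = d_i+d_{i-1}+d_{i+1}-1$; since $x_{i-1},x_{i+1}\notin S$ (independence of $S$) we have $d_{i-1},d_{i+1}<d^*$, hence $d_{i-1}+d_i\le\sigma$ and $d_i+d_{i+1}\le\sigma$ by the criticality of $d^*$, so a short count shows the number of forbidden colors is at most $\kappa-1<\kappa$, leaving at least one color available. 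One must handle the boundary cases $i\in\{1,2,\ell-1,\ell\}$ separately since some of $E_{i-1},E_{i+1}$ or the spine edges may not exist or are already constrained by $P$, and one should also double-check the case where consecutive spine vertices chosen into $S$ are at distance $2$ (the ``$d^*$-run'' situation described when $S$ was defined) — there the two new edges are at distance $2$ and must get distinct colors, which the counting still permits. The hypothesis $\sigma(T)\ge 6$ enters precisely to make $d^*=\lceil(\sigma+1)/2\rceil$ large enough that the slack in these inequalities is genuinely positive; I expect reconciling all the boundary subcases against the definitions of $T'$ and $I_T$ to be the most delicate bookkeeping in the argument.
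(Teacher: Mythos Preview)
Your architecture matches the paper's: pass from $T'_{-1}$ to $T'$ via Lemma~\ref{lenR}, then restore the deleted pendants at the $\sigma$-large vertices using the extra colour. Where your proposal breaks down is precisely at the step you flag as ``the most delicate bookkeeping'', and the difficulty there is not bookkeeping but a real obstruction. If $1\in I_T$, the new pendant at $x_1$ must receive the \emph{specific} colour $c_1\in C_1\setminus C_1^\circ$, not just any colour that survives a count; likewise for $x_\ell$. You choose $c_1$, $c_\ell$ and the deleted colour $\beta=C\setminus C^\circ$ independently, and then hope to place $c_1$ on the new pendant at $x_1$ by a forbidden--colour count. But the edges of $E'_2$ in $T'$ may well carry $c_1$ under $\varphi'$, and the two-sided pre-colourability of $T'$ gives you no control over $\varphi'(E'_2)$. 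Conversely, if $1\notin I_T$ you need $C_1\subseteq C^\circ$, i.e.\ $\beta\notin C_1$. The upshot is that the argument only goes through if a \emph{single} colour $\beta$ is removed, satisfying simultaneously $\beta\in C_1\iff 1\in I_T$ and $\beta\in C_\ell\iff \ell\in I_T$; then every new pendant, including those at $x_1$ and $x_\ell$, gets the same colour $\beta$, and independence of $S$ makes this an induced matching.

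Finding such a $\beta$ is the real content of the lemma, and it is not automatic: for instance when $1\in I_T$, $\ell\notin I_T$ and $C_1=C_\ell$ (forcing $d_1=d_\ell=d^*$), no $\beta\in C_1\setminus C_\ell$ exists. The paper handles this by a four-case analysis on membership of $1,\ell$ in $I_T$; in the bad subcases one first replaces $T$ by $T_{-1}$, manufacturing $C_{\ell-1}$ and $\alpha_{\ell-1}$ so that a compatible $\beta$ now exists for $(C_1,C_{\ell-1})$, and only afterwards extends to $E_\ell$. The hypothesis $\sigma(T)\ge 6$ is used in exactly one of these subcases (both $1,\ell\notin I_T$ and $C_1\cup C_\ell=C$), to guarantee $|C_\ell\setminus C_1|\ge d^*-2\ge 2$ so that an $\alpha_{\ell-1}\ne\alpha_\ell$ can be chosen. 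Your interior counting argument is also off (the bound $d_i+d_{i-1}+d_{i+1}-1$ can exceed $\kappa$; the correct reason $\beta$ is available at interior $x_i$ is simply that $\varphi'$ avoids $\beta$ entirely), but that is secondary: the boundary compatibility is the genuine gap.
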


\begin{proof}
For any $P=(C;\alpha_0,C_1,C_\ell,\alpha_\ell)\in \mathcal{P}_\kappa(T)$,
we have to show that
$\mathcal{C}_T(P)$ is nonempty.

Let $I=I_T$. Our strategy is to search for a color $\beta$ such that
$$\mbox{
         $\beta\in C_1$ if and only if $1\in I$; and
         $\beta\in C_\ell$ if and only if $\ell\in I$.
}$$
Suppose such a color $\beta$ exists and $\beta\neq \alpha_\ell$.
By Lemma~\ref{lenR}, $T'$ admits a strong $(\kappa-1)$-edge coloring
in $\mathcal{C}_{T'}(C-\beta;\alpha_0,C_1-\beta,C_\ell-\beta,\alpha_\ell)$.
Coloring the remaining edges with $\beta$ then yields the required
strong $\kappa$-edge-coloring in $\mathcal{C}_T(P)$.
Notice that $S$ being an independent set guarantees that the edges with color $\beta$
form an induced matching.
If it happens that $\beta$ coincides with $\alpha_\ell$,
then we seek instead for strong-edge coloring in
$\mathcal{C}_{T'}(C-\beta;C_1-\beta,\alpha_0,C_\ell-\beta,\alpha'_{\ell})$ for arbitrary
$\alpha'_{\ell}\in C_\ell-\alpha_\ell$.
We make use of the symmetry of pendant edges incident with $x_\ell$
and still achieve the goal.

Sometimes there is no suitable $\beta$. We alternatively consider $T_{-1}$.
By finding appropriate $d_{\ell-1}$-subset $C_{\ell-1}\subseteq C$ and
$\alpha_{\ell-1}$ with $C_{\ell-1}\cap C_{\ell} = \{\alpha_{\ell-1}\}$,
there will be a $\beta$ such that
$$\mbox{
         $\beta\in C_1$ if and only if $1\in I$; and
         $\beta\in C_{\ell-1}$ if and only if $\ell-1\in I$.
}$$
Similarly, there is a strong edge-coloring in
$\mathcal{C}_{T_{-1}}(C;\alpha_0,C_1,C_{\ell-1},\alpha_{\ell-1})$.
Color the remaining edges with $C_\ell-\alpha_{\ell-1}$ so that $x_{\ell}x_{\ell+1}$
has color $\alpha_\ell$, we gain a
strong $\kappa$-edge-coloring in $\mathcal{C}_T(P)$.

We now prove the existence of $\beta$ according to the following four cases.

\smallskip
\textbf{Case 1.} $1,\ell\in I$.
In this case, $C_1\cap C_\ell$ is nonempty since
\[
   |C_1\cap C_\ell|=|C_1|+|C_\ell|-|C_1\cup C_\ell| \geq 2d^*-\sigma > 0.
\]
Pick $\beta$ to be any color in the intersection.

\smallskip
\textbf{Case 2.} $1\in I$ but $\ell\notin I$.
If $C_1-C_\ell$ is nonempty, then pick $\beta$ to be any color
in the difference.
Otherwise, $1\in I$ and $\ell\notin I$ imply $d_1\geq d^*\geq d_\ell$.
On the other hand,
$C_1-C_\ell = \emptyset$ implies $d_1\leq d_\ell$.
Thus the situation that $C_1-C_\ell$ is empty
occurs only when $d_1=d_\ell=d^*$ and $C_1=C_\ell$.
We consider the subtree $T_{-1}$.
Choose $\alpha_{\ell-1}$ to be any color in $C_\ell-\alpha_\ell$.
Let $C_{\ell-1}$ be $\alpha_{\ell-1}$ together with any
$(d_{\ell-1}-1)$-subset in $C-C_\ell$.

Since $d_\ell=d^*$ but $\ell \notin I$,
it is the case that $\ell-1 \in I$ and $d_{\ell-1}=d^*$.
Pick $\beta=\alpha_{\ell-1}$.


%

\smallskip
\textbf{Case 3.} $\ell\in I$ but $1\notin I$.
If $C_\ell-C_1$ is nonempty, then let $\beta$ be any color
in the difference.
Otherwise, $d_1=d_\ell=d^*$ and $C_1=C_\ell$. But $d_1=d^*$ implies
$1\in I$, a contradiction.

\smallskip
\textbf{Case 4.} $1,\ell\notin I$.
If $C-(C_1 \cup C_\ell)$ is nonempty, then pick $\beta$ to be any color
in the difference set.
Now, suppose $C=C_1 \cup C_\ell$. We consider the subtree $T_{-1}$.
%

First estimate the size
$$
|C_\ell-C_1|=|C_\ell\cup C_1|-|C_1|\geq \sigma-d^*\geq d^*-2 \geq 2,
$$
where $d^*\geq 4$ since $\sigma\geq 6$.
Pick $\alpha_{\ell-1}$ to be any color in $C_\ell-C_1-\alpha_\ell$. Let
$C_{\ell-1}$ be a color set such that
$|C_{\ell-1}|=d_{\ell-1}$ and $C_{\ell-1}\cap C_\ell = \{\alpha_{\ell-1}\}$.

When ${\ell-1}\in I$,
pick $\beta = \alpha_{\ell-1}$.
Otherwise, let $\beta$ be chosen from $C_\ell-C_1-\alpha_{\ell-1}$.
\end{proof}


Now we are ready to prove the key lemma.
\begin{lem} \label{caterpillar}
Suppose $T=\Cat(d_{1},d_{2},\dots,d_{\ell})$ is a nice caterpillar tree,
i.e.\ it satisfies
$$
  \sigma=\sigma(T)\geq5, ~~~
  \ell\geq \sigma+3 ~~~ {\rm and} ~~~
  \sigma\geq\Delta(T)+2.
$$
For any $\kappa\geq \sigma(T)$,
any color sets $C_1, C_\ell \subseteq C$ with $|C|=\kappa$, $|C_1|=d_1$, $|C_\ell|=d_\ell$,
and any two colors $\alpha_0\in C_1$, $\alpha_\ell\in C_\ell$,
there is a strong $\sigma$-edge colorings $\varphi$ using the colors in $C$ such that
$\varphi(E_1) = C_1$, $\varphi(E_\ell) = C_\ell$
and $\varphi(x_0x_1)=\alpha_0$, $\varphi(x_{\ell} x_{\ell+1})=\alpha_\ell$.
That is,
$T$ is $\kappa$-two-sided strong edge-pre-colorable for any $\kappa\geq \sigma$.
\end{lem}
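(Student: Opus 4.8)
The plan is to argue by induction on $\sigma=\sigma(T)$; Lemma~\ref{kappaR} reduces the statement for arbitrary $\kappa\ge\sigma$ to the single case $\kappa=\sigma$, so I fix $\kappa=\sigma$ and aim to show that every nice caterpillar with $\sigma$-value $\sigma$ is $\sigma$-two-sided strong edge-pre-colorable. The inductive step handles $\sigma\ge 6$ by descending, via Lemma~\ref{caterpillar_induct}, to a smaller nice caterpillar, and $\sigma=5$ is the base case.

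For the step, let $T=\Cat(d_1,\dots,d_\ell)$ be nice with $\sigma\ge 6$ and consider the caterpillar $T'_{-1}=\Cat(d'_1,\dots,d'_{\ell-1})$ constructed in the paragraph preceding Lemma~\ref{caterpillar_induct}. The crux is to verify that $T'_{-1}$ is again nice, now with parameter $\sigma-1$. Its length is $\ell-1\ge(\sigma+3)-1=(\sigma-1)+3$. Using the freedom, noted just after Lemma~\ref{lenR}, to reverse the spine of $T$, I can arrange that the maximum edge-degree-sum of $T'$ is attained at an edge surviving the deletion that produces $T'_{-1}$, so that $\sigma(T'_{-1})=\sigma(T')=\sigma-1\ge 5$; the only configuration this would rule out --- the maximum of $T'$ attained uniquely at its first edge and also uniquely at its last edge --- cannot occur, since $T'$ has more than two spine-edges. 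Finally $\Delta(T'_{-1})\le\Delta(T')\le\sigma-3$: any vertex of degree exceeding $d^*$ lies in $S$ and is therefore reduced, contributing at most $\Delta(T)-1\le\sigma-3$; and a maximal run of two or more consecutive vertices of degree exactly $d^*$ forces $2d^*-1\le\sigma$, hence $\sigma$ odd and $\ge 7$, where $d^*=\tfrac{\sigma+1}{2}\le\sigma-3$ regardless. Thus $\sigma(T'_{-1})=\sigma-1\ge\Delta(T'_{-1})+2$, so $T'_{-1}$ is nice; the induction hypothesis, together with Lemma~\ref{kappaR}, makes it $(\kappa-1)$-two-sided strong edge-pre-colorable for $\kappa=\sigma$, and Lemma~\ref{caterpillar_induct} then yields $T$.

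The base case $\sigma=5$ (so $\Delta(T)\le 3$, every $d_i\in\{2,3\}$, $\ell\ge 8$) falls outside the scope of Lemma~\ref{caterpillar_induct}, whose hypothesis $\sigma(T)\ge 6$ fails. I would dispose of it by a secondary induction on $\ell$: when $\ell\ge 9$, after reversing the spine if needed so that $\sigma(T_{-1})=5$, Lemma~\ref{lenR} reduces $T$ to the shorter nice caterpillar $T_{-1}$; and for the base length $\ell=8$ there are only finitely many such $T$, for each of which one exhibits the required pre-coloring by an explicit construction that routes the prescribed data $\alpha_0,C_1,C_\ell,\alpha_\ell$ along the spine, exploiting the interchangeability of the pendant edges at $x_1$ and $x_\ell$ and the slack $5=\kappa\ge d_i+d_{i+1}-1$ available at every spine-edge. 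I expect this base case to be the main obstacle: the inductive step is essentially bookkeeping once Lemma~\ref{caterpillar_induct} is granted, whereas $\sigma=5$ has no comparable lemma to lean on and must be settled by a hands-on colouring argument, in which the bound $\ell\ge 8$ is precisely what supplies enough room on the spine to realise any prescribed pair of end-patterns simultaneously.
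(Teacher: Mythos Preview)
Your proposal follows essentially the same route as the paper's own proof: induction on $\sigma$, with Lemma~\ref{kappaR} reducing to $\kappa=\sigma$, Lemma~\ref{caterpillar_induct} furnishing the inductive step once $T'_{-1}$ is shown to be nice, and Lemma~\ref{lenR} cutting the base case $\sigma=5$ down to $\ell=8$. Your verification that $T'_{-1}$ is nice differs only cosmetically from the paper's (you bound $\Delta(T')$ directly by $\sigma-3$, whereas the paper argues by contradiction that $\sigma(T'_{-1})\ge\Delta(T'_{-1})+2$), and both are sound.

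The one place where your proposal stops short is exactly where you predict: the base case $\sigma=5$, $\ell=8$. The paper does not supply a clever structural argument here either; it invokes Lemma~\ref{subgraphR} to reduce to $d_1=\cdots=d_8=3$, then simply tabulates explicit colourings covering all possible end-data $(\alpha_0,C_1,C_8,\alpha_8)$ up to symmetry (eleven rows suffice), and alternatively points to the odd-graph arguments in \cite{Borodin+13,Chang+13,WZ15}. So your expectation that this case ``must be settled by a hands-on colouring argument'' is exactly what the paper does, and completing your proof would amount to producing that table.
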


\begin{proof}
We prove the lemma by induction on $\sigma=\sigma(T)$.
By Lemmas~\ref{kappaR} and~\ref{lenR}, it suffices to consider the case $\kappa=\sigma$
and $\ell=\ell_\sigma$.

If $T$ is nice and $\sigma\geq 6$,
then $T'_{-1}$ is also a nice caterpillar tree:
The first two conditions remain since
$\sigma(T'_{-1})=\sigma(T')=\sigma(T)-1$.
The third one $\sigma(T'_{-1})\geq\Delta(T'_{-1})+2$
fails only when $\sigma(T)=\sigma(T'_{-1})+1\leq\Delta(T'_{-1})+2\leq \Delta(T)+2$
and so
$\Delta(T') = \Delta(T)$.
Since $\Delta(T) \ge d^*$, in this case,
$\Delta(T)=d^*\ge 4$ and there is at least a pair of $d_i=d_{i+1}=d^*$.
Then $\sigma(T'_{-1})=\sigma(T)-1=2\Delta(T)-2\geq \Delta(T)+2 \ge \Delta(T'_{-1})+2$.


By Lemma~\ref{caterpillar_induct}, we only have to discuss the base cases $\sigma=5$ and $\ell=8$.
We may assume all degrees $d_i=3$ since $\sigma\geq\Delta+2$.
Also assume $C_1=\{1,2,3\}$ and $\alpha_0=1$.
Depending on $C_1 \cap C_8$ and whether $\alpha_8 =  \alpha_0$ or not,
by symmetry we color $T$ according to $\varphi$ shown in Table~\ref{tbl:S5},
where $\alpha_i=\varphi(x_ix_{i+1})$ and $\widehat{C}_i=\varphi(C_i)-\varphi(x_{i-1}x_i)-\varphi(x_ix_{i+1})$.
Or we can solve this case by the argument in~\cite{Borodin+13} or the odd graph method in~\cite{Chang+13,WZ15}.
\end{proof}
\begin{table}[!ht]
\begin{center}
  \setlength{\tabcolsep}{5pt}
  \begin{tabular}{ccccccccccccccccc}
  $\alpha_0$ & $\widehat{C}_1$ & $\alpha_1$ & $\widehat{C}_2$ & $\alpha_2$ & $\widehat{C}_3$ & $\alpha_3$ & $\widehat{C}_4$ &
  $\alpha_4$ & $\widehat{C}_5$ & $\alpha_5$ & $\widehat{C}_6$ & $\alpha_6$ & $\widehat{C}_7$ & $\alpha_7$ & $\widehat{C}_8$ & $\alpha_8$ \\
  \hline
  1 & \{3\} & 2 & \{4\} & 5 & \{1\} & 3 & \{4\} & 2 & \{5\} & 1 & \{3\} & 4 & \{5\} & 2 & \{1\} & 3 \\
  1 & \{3\} & 2 & \{4\} & 5 & \{1\} & 3 & \{4\} & 2 & \{5\} & 1 & \{3\} & 4 & \{5\} & 2 & \{3\} & 1 \\
  1 & \{2\} & 3 & \{5\} & 4 & \{1\} & 2 & \{5\} & 3 & \{1\} & 4 & \{2\} & 5 & \{1\} & 3 & \{4\} & 2 \\
  1 & \{2\} & 3 & \{5\} & 4 & \{1\} & 2 & \{5\} & 3 & \{1\} & 4 & \{2\} & 5 & \{1\} & 3 & \{2\} & 4 \\
  1 & \{3\} & 2 & \{4\} & 5 & \{1\} & 3 & \{4\} & 2 & \{5\} & 1 & \{4\} & 3 & \{5\} & 2 & \{4\} & 1 \\
  1 & \{3\} & 2 & \{4\} & 5 & \{1\} & 3 & \{4\} & 2 & \{5\} & 1 & \{4\} & 3 & \{5\} & 2 & \{1\} & 4 \\
  1 & \{3\} & 2 & \{4\} & 5 & \{1\} & 3 & \{2\} & 4 & \{5\} & 1 & \{2\} & 3 & \{5\} & 4 & \{1\} & 2 \\
  1 & \{3\} & 2 & \{4\} & 5 & \{1\} & 3 & \{4\} & 2 & \{5\} & 1 & \{4\} & 3 & \{2\} & 5 & \{4\} & 1 \\
  1 & \{3\} & 2 & \{4\} & 5 & \{1\} & 3 & \{4\} & 2 & \{5\} & 1 & \{4\} & 3 & \{2\} & 5 & \{1\} & 4 \\
  1 & \{3\} & 2 & \{4\} & 5 & \{1\} & 3 & \{2\} & 4 & \{1\} & 5 & \{2\} & 3 & \{1\} & 4 & \{5\} & 2 \\
  1 & \{3\} & 2 & \{4\} & 5 & \{1\} & 3 & \{2\} & 4 & \{1\} & 5 & \{2\} & 3 & \{1\} & 4 & \{2\} & 5
  \end{tabular}
\end{center}
\vskip -0.5cm
\caption{The 5-strong edge-colorings of $T$ for $\sigma=5$ with $\ell=8$.}
\label{tbl:S5}
\end{table}

%

\section{Refinement of Lemma~\ref{caterpillar}}\label{sec_refine}

We now discuss the optimality of Lemma~\ref{caterpillar}.
If we take more care about the base cases, there would be a refinement:

\begin{lem} \label{caterpillar2}
Suppose $T$ is a caterpillar tree of length $\ell$ satisfying
$$
  \sigma=\sigma(T)\geq5, ~~~
  \ell\geq \ell_\sigma ~~~ {\rm and} ~~~
  \sigma\geq\Delta(T)+2,
$$
where
$$
\ell_\sigma =
\begin{cases}
  8, & \mbox{if } \sigma=5; \\
  7, & \mbox{if } \sigma=6,7; \\
  \sigma, & \mbox{if } \sigma\geq 8.
\end{cases}
$$
Then $T$ is $\kappa$-two-sided strong edge-pre-colorable for any $\kappa\geq \sigma$.
\end{lem}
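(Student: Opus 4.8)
\medskip\noindent\textbf{Proof outline for Lemma~\ref{caterpillar2}.}
The plan is to rerun the induction on $\sigma=\sigma(T)$ that proves Lemma~\ref{caterpillar}, but to anchor it at $\sigma=7$: we treat $\sigma\in\{5,6,7\}$ as base cases, each verified with its sharpened length threshold $\ell_\sigma$, and deduce every $\sigma\geq8$ from its predecessor. As in the proof of Lemma~\ref{caterpillar}, Lemma~\ref{kappaR} reduces the claim to $\kappa=\sigma$, while Lemma~\ref{lenR} together with the convention $\sigma(T_{-1})=\sigma(T)$ (attained by reversing the spine if necessary) reduces it to $\ell=\ell_\sigma$. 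So it suffices to show: every caterpillar $T$ with $\sigma(T)=\sigma$, $\Delta(T)\leq\sigma-2$ and length exactly $\ell_\sigma$ is $\sigma$-two-sided strong edge-pre-colorable.

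\emph{Inductive step ($\sigma\geq8$).} Assume the statement for $\sigma-1$ and replace $T$ by the caterpillar $T'_{-1}$ of Lemma~\ref{caterpillar_induct}; this lowers both $\sigma$ and the length by exactly one. The part of the niceness-inheritance argument in the proof of Lemma~\ref{caterpillar} that gives $\sigma(T'_{-1})=\sigma-1$ and $\Delta(T'_{-1})\leq\sigma-3$ uses only $\sigma\geq6$ (through $d^*=\lceil(\sigma+1)/2\rceil\geq4$), hence applies unchanged. The length needs separate, easier bookkeeping: when $\sigma=8$ one has $\ell_\sigma-1=7=\ell_7$, and when $\sigma\geq9$ one has $\ell_\sigma-1=\sigma-1=\ell_{\sigma-1}$, so $T'_{-1}$ always has length at least $\ell_{\sigma-1}$. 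The induction hypothesis then makes $T'_{-1}$ $\kappa'$-two-sided strong edge-pre-colorable for all $\kappa'\geq\sigma-1$, and since $\sigma\geq6$, Lemma~\ref{caterpillar_induct} upgrades this (taking $\kappa'=\sigma-1$) to the desired conclusion for $T$.

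\emph{Base cases.} For $\sigma=5$, $\ell=\ell_5=8$ there is nothing new: this is exactly the base case already settled by Table~\ref{tbl:S5}. The values $\sigma=6,7$ genuinely have to be done by hand, because the inductive step cannot reach them — reducing $\sigma=7$, $\ell=7$ through Lemma~\ref{caterpillar_induct} would need the $\sigma=6$ statement at length $6<\ell_6$, and reducing $\sigma=6$, $\ell=7$ would need the $\sigma=5$ statement at length $6<\ell_5$. Here $\sigma\geq\Delta(T)+2$ forces $\Delta(T)\leq4$ (when $\sigma=6$) or $\Delta(T)\leq5$ (when $\sigma=7$), so for $\ell=7$ there are only finitely many admissible degree sequences; by Lemma~\ref{subgraphR} it is enough to check the coordinatewise-maximal ones (for $\sigma=6$ the $\{3,4\}$-sequences with no adjacent $4$'s in which every $3$ has a $4$-neighbour, e.g.\ $(4,3,4,3,4,3,4)$ and $(4,3,3,4,3,3,4)$; for $\sigma=7$ the analogous maximal $\{3,4,5\}$-sequences, e.g.\ $(4,4,4,4,4,4,4)$ and $(5,3,5,3,5,3,5)$). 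For each such fixed $T$ one enumerates the precoloring data $(C;\alpha_0,C_1,C_\ell,\alpha_\ell)$ up to the symmetries of $T$ — swapping the two ends of the spine, permuting the colors outside $C_1\cup C_\ell$, and permuting the pendant edges at $x_1$ and $x_\ell$ — and displays a strong $\sigma$-edge-coloring in each class, in the spirit of Table~\ref{tbl:S5}; alternatively one applies to these small trees the argument of Borodin and Ivanova~\cite{Borodin+13} or the odd-graph method of~\cite{Chang+13,WZ15}.

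The main obstacle is precisely this finite verification of the $\sigma=6$ and $\sigma=7$ base cases: unlike the single table that handles $\sigma=5$, these involve $6$- and $7$-color palettes and spine vertices of degree up to $5$, so the number of degree sequences and precoloring classes to dispatch is considerably larger, and is best organized by a short exhaustive computer check or by a uniform structural argument exploiting the alternating $d^*$-pattern of the extremal sequences. Granting these checks, the inductive step transports the thresholds $\ell_\sigma$ to all $\sigma\geq5$, and the lemma follows.
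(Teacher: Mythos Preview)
Your overall scheme is correct and matches the paper's in the inductive step; the difference lies entirely in how the base cases $\sigma\in\{6,7\}$ are handled, and there the paper is considerably more economical than a direct enumeration at $\ell=7$.

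For $\sigma=6$ the paper does not enumerate the maximal length-$7$ sequences. Instead it drops to length $6$: by Lemma~\ref{lenR} it suffices that $T_{-1}$ be $6$-two-sided strong edge-pre-colorable, and up to symmetry the maximal length-$6$ sequences are $\Cat(4,3,4,3,4,3)$, $\Cat(4,3,4,3,3,4)$ and $\Cat(3,4,3,3,4,3)$. The first two are dispatched by explicit tables (analogous to Table~\ref{tbl:S5}). The third is \emph{not} $6$-two-sided pre-colorable (it is precisely the counterexample showing $\ell_6$ cannot be lowered to $6$), but any length-$7$ caterpillar whose $T_{-1}$ equals it is contained in $\Cat(3,4,3,3,4,3,4)$; reversing the spine turns this into $\Cat(4,3,4,3,3,4,3)$, whose $T_{-1}=\Cat(4,3,4,3,3,4)$ has already been verified. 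So only two short tables are needed.

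For $\sigma=7$ your claim that Lemma~\ref{caterpillar_induct} ``cannot reach'' the case is too pessimistic. It is true that the full $\sigma=6$ statement fails at length $6$, but Lemma~\ref{caterpillar_induct} only requires that the \emph{particular} $T'_{-1}$ arising from each maximal $\sigma=7$, $\ell=7$ caterpillar be $6$-two-sided pre-colorable. The paper lists all such $T$ (about fourteen, up to symmetry), computes $T'_{-1}$ for each, and observes that in every case $T'_{-1}$ is one of the two verified length-$6$ caterpillars or a subgraph thereof---after reversing the spine for two exceptional $T$. Thus $\sigma=7$ is obtained from the $\sigma=6$ length-$6$ tables via Lemma~\ref{caterpillar_induct} with no new explicit colorings at all. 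Your brute-force plan would work, but the paper's route replaces the large $\sigma=6,7$ verifications you anticipate by two length-$6$ tables plus a short list of reductions.
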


\begin{proof}
Similar to Lemma~\ref{caterpillar}, we only need to consider the base cases.

For $\sigma=6$, we first consider the situation $\ell=6$.
By Lemma~\ref{subgraphR} and the symmetry,
it suffices to discuss the caterpillar trees
$\Cat(4,3,4,3,4,3)$,
$\Cat(4,3,4,3,3,4)$, and
$\Cat(3,4,3,3,4,3)$.
We enumerate all the cases in Table~\ref{tbl:S6_434343} and Table~\ref{tbl:S6_434334}
to show that the first two are $6$-two-sided strong edge-pre-colorable.
%
\begin{table}[!ht]
\begin{center}
  \begin{tabular}{ccccccccccccc}
  $\alpha_0$ & $\widehat{C}_1$ & $\alpha_1$ & $\widehat{C}_2$ & $\alpha_2$ & $\widehat{C}_3$ & $\alpha_3$ & $\widehat{C}_4$ &
  $\alpha_4$ & $\widehat{C}_5$ & $\alpha_5$ & $\widehat{C}_6$ & $\alpha_6$ \\
  \hline
1 & \{3 , 4\} & 2 & \{6\} & 5 & \{3 , 4\} & 1 & \{2\} & 6 & \{4 , 5\} & 3 & \{2\} & 1 \\
1 & \{2 , 4\} & 3 & \{5\} & 6 & \{1 , 4\} & 2 & \{3\} & 5 & \{4 , 5\} & 1 & \{3\} & 2 \\
1 & \{3 , 4\} & 2 & \{6\} & 5 & \{3 , 4\} & 1 & \{2\} & 6 & \{3 , 4\} & 5 & \{2\} & 1 \\
1 & \{2 , 4\} & 3 & \{5\} & 6 & \{1 , 4\} & 2 & \{5\} & 3 & \{4 , 6\} & 1 & \{5\} & 2 \\
1 & \{2 , 4\} & 3 & \{5\} & 6 & \{2 , 4\} & 1 & \{5\} & 3 & \{4 , 6\} & 2 & \{1\} & 5 \\
1 & \{2 , 4\} & 3 & \{5\} & 6 & \{2 , 4\} & 1 & \{5\} & 3 & \{2 , 4\} & 6 & \{5\} & 1 \\
1 & \{2 , 3\} & 4 & \{5\} & 6 & \{2 , 3\} & 1 & \{5\} & 4 & \{2 , 3\} & 6 & \{1\} & 5 \\
1 & \{3 , 4\} & 2 & \{6\} & 5 & \{1 , 4\} & 3 & \{2\} & 6 & \{1 , 5\} & 4 & \{3\} & 2 \\
1 & \{2 , 3\} & 4 & \{5\} & 6 & \{1 , 3\} & 2 & \{5\} & 4 & \{1 , 6\} & 3 & \{5\} & 2 \\
1 & \{2 , 3\} & 4 & \{5\} & 6 & \{1 , 3\} & 2 & \{5\} & 4 & \{1 , 6\} & 3 & \{2\} & 5 \\
1 & \{2 , 3\} & 4 & \{5\} & 6 & \{1 , 3\} & 2 & \{5\} & 4 & \{1 , 3\} & 6 & \{5\} & 2 \\
1 & \{2 , 4\} & 3 & \{5\} & 6 & \{1 , 4\} & 2 & \{5\} & 3 & \{1 , 4\} & 6 & \{2\} & 5
  \end{tabular}
\end{center}
\vskip -0.5cm
\caption{The 6-strong edge-colorings for $T=\Cat(4,3,4,3,4,3)$.}
\label{tbl:S6_434343}
\end{table}
\begin{table}[!ht]
\begin{center}
  \begin{tabular}{ccccccccccccc}
  $\alpha_0$ & $\widehat{C}_1$ & $\alpha_1$ & $\widehat{C}_2$ & $\alpha_2$ & $\widehat{C}_3$ & $\alpha_3$ & $\widehat{C}_4$ &
  $\alpha_4$ & $\widehat{C}_5$ & $\alpha_5$ & $\widehat{C}_6$ & $\alpha_6$ \\
  \hline
1 & \{2 , 4\} & 3 & \{5\} & 6 & \{2 , 4\} & 1 & \{3\} & 5 & \{6\} & 4 & \{2 , 3\} & 1 \\
1 & \{3 , 4\} & 2 & \{5\} & 6 & \{1 , 4\} & 3 & \{2\} & 5 & \{6\} & 1 & \{3 , 4\} & 2 \\
1 & \{3 , 4\} & 2 & \{5\} & 6 & \{1 , 3\} & 4 & \{5\} & 2 & \{6\} & 3 & \{4 , 5\} & 1 \\
1 & \{2 , 4\} & 3 & \{6\} & 5 & \{1 , 2\} & 4 & \{3\} & 6 & \{2\} & 1 & \{4 , 5\} & 3 \\
1 & \{2 , 4\} & 3 & \{6\} & 5 & \{1 , 2\} & 4 & \{3\} & 6 & \{2\} & 1 & \{3 , 4\} & 5 \\
1 & \{3 , 4\} & 2 & \{5\} & 6 & \{3 , 4\} & 1 & \{5\} & 2 & \{3\} & 6 & \{4 , 5\} & 1 \\
1 & \{3 , 4\} & 2 & \{6\} & 5 & \{3 , 4\} & 1 & \{6\} & 2 & \{3\} & 5 & \{1 , 6\} & 4 \\
1 & \{3 , 4\} & 2 & \{6\} & 5 & \{1 , 3\} & 4 & \{6\} & 2 & \{3\} & 1 & \{4 , 6\} & 5 \\
1 & \{3 , 4\} & 2 & \{6\} & 5 & \{1 , 4\} & 3 & \{2\} & 6 & \{1\} & 4 & \{3 , 5\} & 2 \\
1 & \{2 , 4\} & 3 & \{6\} & 5 & \{1 , 2\} & 4 & \{3\} & 6 & \{1\} & 2 & \{3 , 4\} & 5 \\
1 & \{3 , 4\} & 2 & \{5\} & 6 & \{1 , 4\} & 3 & \{5\} & 2 & \{1\} & 6 & \{4 , 5\} & 3 \\
1 & \{3 , 4\} & 2 & \{6\} & 5 & \{1 , 3\} & 4 & \{6\} & 2 & \{1\} & 3 & \{4 , 6\} & 5
\end{tabular}
\end{center}
\vskip-0.5cm
\caption{The 6-strong edge-colorings for $T=\Cat(4,3,4,3,3,4)$.}
\label{tbl:S6_434334}
\end{table}

If the caterpillar tree $T$ considered with $\sigma=6$ and $\ell=7$ has $T_{-1}=\Cat(3,4,3,3,4,3)$,
then $T$ is a subtree of $\Cat(3,4,3,3,4,3,4)$.
We can assume $T=\Cat(3,4,3,3,4,3,4)$ by Lemma~\ref{subgraphR}.
Reverse the direction to see $T$ as $\Cat(4,3,4,3,3,4,3)$. Then the subtree
$T_{-1}=\Cat(4,3,4,3,3,4)$, which is
$6$-two-sided strong edge-pre-colorable.
Hence all the caterpillar trees with $\sigma=6$ and $\ell=7$ are $6$-two-sided strong edge-pre-colorable.

For $\sigma=7$ and $\ell=7$. It suffices to consider the caterpillar trees in Table~\ref{tbl:T7_7}.
\begin{table}[!ht]
\begin{center}
  \begin{tabular}{cc}
  $T$ & $T'_{-1}$ \\
  \hline
  $\Cat(3,5,3,5,3,5,3)$ & $\Cat(3,4,3,4,3,4)$ \\
  $\Cat(5,3,5,3,3,5,3)$ & $\Cat(4,3,4,3,3,4)$ \\
  $\Cat(5,3,3,5,3,5,3)$ & $\Cat(4,3,3,4,3,4)$ \\
  $\Cat(5,3,5,3,5,3,5)$ & $\Cat(4,3,4,3,4,3)$ \\
  $\Cat(5,3,3,5,3,3,5)$ & $\Cat(4,3,3,4,3,3)$ \\
  $\Cat(3,5,3,5,3,4,4)$ & $\Cat(3,4,3,4,3,3)$ \\
  $\Cat(5,3,5,3,4,4,4)$ & $\Cat(4,3,4,3,3,4)$ \\
  $\Cat(3,5,3,4,4,4,4)$ & $\Cat(3,4,3,3,4,3)$ \\
  $\Cat(5,3,4,4,4,4,4)$ & $\Cat(4,3,3,4,3,4)$ \\
  $\Cat(4,4,4,4,4,4,4)$ & $\Cat(3,4,3,4,3,4)$ \\
  $\Cat(3,5,3,4,4,3,5)$ & $\Cat(3,4,3,3,4,3)$ \\
  $\Cat(5,3,4,4,4,3,5)$ & $\Cat(4,3,3,4,3,3)$ \\
  $\Cat(4,4,3,5,3,4,4)$ & $\Cat(3,4,3,4,3,3)$ \\
  $\Cat(4,4,3,5,3,3,5)$ & $\Cat(3,4,3,4,3,3)$ \\
  \end{tabular}
\end{center}
\vskip -0.5cm
\caption{The caterpillar trees to be considered for $\sigma=7$ and $\ell=7$.}
\label{tbl:T7_7}
\end{table}

All the trees $T$ considered except $\Cat(3,5,3,4,4,4,4)$ and $\Cat(3,5,3,4,4,3,5)$ have
$T'_{-1}$ being $6$-two-sided strong edge-pre-colorable, so these $T$ are
$7$-two-sided strong edge-pre-colorable by Lemma~\ref{caterpillar_induct}.

If we see $\Cat(3,5,3,4,4,4,4)$ as $T=\Cat(4,4,4,4,3,5,3)$, then $T'_{-1}=\Cat(3,4,3,4,3,4)$ is
$6$-two-sided strong edge-pre-colorable.
Similarly, regard $\Cat(3,5,3,4,4,3,5)$ as $T=\Cat(5,3,4,4,3,5,3)$, then $T'_{-1}=\Cat(4,3,3,4,3,4)$
is $6$-two-sided strong edge-pre-colorable. So these two trees are also $7$-two-sided strong edge-pre-colorable by
Lemma~\ref{caterpillar_induct},
and hence all the caterpillar trees considered with $\sigma=7$ and $\ell=7$ are $7$-two-sided strong edge-pre-colorable.
\end{proof}

The $\ell_\sigma$ here cannot be reduced:
For $\sigma\geq 7$, consider $\ell=\sigma-1$ and $T=\Cat(d_1,d_2,\dots,d_\ell)$,
where $d_1,d_3\dots=\lfloor\frac{\sigma+1}{2}\rfloor$ and
$d_2,d_4\dots=\lceil\frac{\sigma+1}{2}\rceil$.

If $\sigma=2d-1$ is an odd integer, let $P=([1,\sigma];1,[1,d],[1,d],1)\in \mathcal{P}_\sigma(T)$.
Suppose there is some $\varphi\in\mathcal{C}_T(P)$. Let $C_i=\varphi(E_i)$.
Then $|C_{i+2}-C_{i}|=1$ for $i=1,2,\dots,\ell-2$.
So $|C_\ell-C_{2}|\leq d-2$. However, $C_1=C_\ell$ implies $|C_\ell-C_2|=d-1$, a contradiction.

If $\sigma=2d-2$ is an even integer, let $P=([1,\sigma];1,[1,d-1],[d,2d-2],d)\in \mathcal{P}_\sigma(T)$.
Suppose there is some $\varphi\in\mathcal{C}_T(P)$. Let $C_i=\varphi(E_i)$.
Again $|C_{i+2}-C_{i}|=1$ for $i=1,2,\dots,\ell-2$.
So $d-1=|C_\ell-C_1|\leq d-2$, a contradiction.

For $\sigma=6$, let $T=\Cat(3,4,3,3,4,3)$ and
$P=([1,6];1,\{1,2,3\},\{4,5,6\},6)\in \mathcal{P}_\sigma(T)$.
Suppose there is some $\varphi\in\mathcal{C}_T(P)$. Let $C_i=\varphi(E_i)$ and $X=C_3\cup C_4-\varphi(x_2x_3)-\varphi(x_4x_5)$.
Then $|C_1\cap X| = |X\cap C_6| = 2$ implies $|X|\geq 4$, which is impossible since $|X|=3$.

Exploiting Lemma~\ref{caterpillar2}, the main Theorem~\ref{main} can be strengthened to:

\begin{thm} \label{main2}
If $G$ is a planar graph with $\sigma=\sigma(G)\geq 5$,
$\sigma\geq\Delta(G)+2$ and girth at least $g_\sigma$,
where
$$
g_{\sigma} =
\begin{cases}
  41, & \mbox{if } \sigma=5; \\
  36, & \mbox{if } \sigma=6,7; \\
  5\sigma+1, & \mbox{if } \sigma\geq 8,
\end{cases}
$$
then $\chi_{s}'(G)=\sigma$.
\end{thm}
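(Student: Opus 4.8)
The plan is to mimic exactly the structure used for Theorem~\ref{main}, replacing the crude girth bound $5\sigma+16$ by the sharper $g_\sigma$ and feeding in the refined caterpillar lemma (Lemma~\ref{caterpillar2}) in place of Lemma~\ref{caterpillar}. First I would suppose, for contradiction, that $G$ is a vertex-minimal planar counterexample with $\sigma=\sigma(G)\ge 5$, $\sigma\ge\Delta(G)+2$, and girth at least $g_\sigma$, yet $\chi'_s(G)>\sigma$. As in the proof of Theorem~\ref{main}, minimality together with Lemma~\ref{cut} forces that no vertex $x$ is adjacent to $\deg(x)-1$ leaves, so deleting all degree-$1$ vertices yields a planar graph $H$ with $\delta(H)\ge 2$ and the same girth as $G$.

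Next I would invoke Lemma~\ref{p_lg_path}: since $H$ has minimum degree at least $2$ and girth at least $5\ell_\sigma+1$ (this is exactly why $g_\sigma$ is chosen as $5\ell_\sigma+1$ in each case — $5\cdot 8+1=41$, $5\cdot 7+1=36$, $5\sigma+1$), $H$ contains an $\ell_\sigma$-thread, i.e.\ an induced path $x_0x_1\cdots x_{\ell_\sigma+1}$ in $H$ with $\deg_H(x_i)=2$ for $1\le i\le \ell_\sigma$. As before, let $G'$ be obtained from $G$ by deleting the leaf-neighbours of $x_2,\dots,x_{\ell_\sigma-1}$ and the vertices $x_3,\dots,x_{\ell_\sigma-2}$; by minimality $G'$ has a strong $\sigma$-edge-coloring $\varphi$. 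Let $T$ be the subgraph of $G$ induced by $x_1,\dots,x_{\ell_\sigma}$ and their neighbours in $G$. Since each $x_i$ has degree $2$ in $H$, its $G$-degree is $2$ plus its number of leaf-neighbours, so $T$ is a caterpillar tree of length $\ell_\sigma$ with all spine degrees $d_i\le\Delta(G)\le\sigma-2$; in particular $\sigma(T)\le\sigma$, and $\sigma(T)\ge\Delta(T)+2$ holds because any two consecutive spine degrees sum to at most $2(\sigma-2)$ — actually I only need $\sigma(T)\ge\Delta(T)+2$, which follows since $\Delta(T)\le\sigma-2$ forces $\sigma(T)\le\chi'_s(G)$… more carefully, $T$ being a subtree of the ``nice'' configuration, one checks $\sigma(T)\ge\Delta(T)+2$ directly from $\Delta(T)\le\Delta(G)\le\sigma-2$ and the fact that the thread guarantees a spine vertex of degree $\ge 2$ adjacent to another of degree $\ge 2$. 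Then $T$ satisfies the hypotheses of Lemma~\ref{caterpillar2} with this $\ell_\sigma$.

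Applying Lemma~\ref{caterpillar2} with $\kappa=\sigma$, and with $C$ the full color set, $C_1=\varphi(E_1)$, $C_{\ell_\sigma}=\varphi(E_{\ell_\sigma})$, $\alpha_0=\varphi(x_0x_1)$, $\alpha_{\ell_\sigma}=\varphi(x_{\ell_\sigma}x_{\ell_\sigma+1})$ read off from the coloring of $G'$, we obtain a strong $\sigma$-edge-coloring $\varphi'$ of $T$ that agrees with $\varphi$ on the edges incident with $x_1$ and with $x_{\ell_\sigma}$, including on the boundary edges $x_0x_1$ and $x_{\ell_\sigma}x_{\ell_\sigma+1}$. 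Gluing $\varphi$ on $E(G')\setminus E(T)$ with $\varphi'$ on $E(T)$ then produces a strong $\sigma$-edge-coloring of $G$: the only potential conflicts are between edges of $T$ and edges of $G'$ at distance at most two, but because the path $x_1\cdots x_{\ell_\sigma}$ has length $\ell_\sigma\ge 7$ and all its internal vertices have $H$-degree $2$, every edge of $G'\setminus T$ is at distance more than $2$ from every edge of $T$ except those incident to $x_1$ or $x_{\ell_\sigma}$, on which the two colorings were arranged to coincide. This contradicts the choice of $G$, completing the proof. The only real content beyond Theorem~\ref{main} is verifying that $5\ell_\sigma+1=g_\sigma$ in each regime and that Lemma~\ref{p_lg_path} therefore applies; everything else is an instance of the gluing argument already carried out. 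The main obstacle — and the place where genuine work happens — is not in this theorem at all but in the base-case enumerations of Lemma~\ref{caterpillar2}, which have been dispatched via Tables~\ref{tbl:S6_434343}, \ref{tbl:S6_434334}, and \ref{tbl:T7_7}; here I would only need to double-check the subtle point that the spine degrees of the extracted caterpillar $T$ can be assumed to be exactly those listed (using Lemma~\ref{subgraphR}) and that reversing orientation is harmless.

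\begin{proof}
This is entirely parallel to the proof of Theorem~\ref{main}, using Lemma~\ref{caterpillar2} in place of Lemma~\ref{caterpillar}. Since $\chi'_s(G)\ge\sigma(G)$ is trivial, suppose for contradiction that $G$ is a vertex-minimal planar graph with $\sigma=\sigma(G)\ge 5$, $\sigma\ge\Delta(G)+2$, girth at least $g_\sigma$, and $\chi'_s(G)>\sigma$. By Lemma~\ref{cut} and minimality, no vertex $x$ is adjacent to $\deg(x)-1$ leaves, so $H=G-\{x\in V(G):\deg(x)=1\}$ has minimum degree at least $2$ and the same girth as $G$. In each case $g_\sigma=5\ell_\sigma+1$, so Lemma~\ref{p_lg_path} yields an induced path $x_0x_1\cdots x_{\ell_\sigma+1}$ with $\deg_H(x_i)=2$ for $1\le i\le\ell_\sigma$.

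Let $G'$ be obtained from $G$ by deleting the leaf-neighbours of $x_2,\dots,x_{\ell_\sigma-1}$ and the vertices $x_3,\dots,x_{\ell_\sigma-2}$, and let $T$ be the subgraph of $G$ induced by $x_1,\dots,x_{\ell_\sigma}$ together with all their neighbours in $G$. Then $T$ is a caterpillar tree of length $\ell_\sigma$ with all spine degrees at most $\Delta(G)\le\sigma-2$, so $\sigma(T)\le\sigma$; and $\sigma(T)\ge\Delta(T)+2$ since consecutive spine vertices along the thread both have degree at least $2$ while $\Delta(T)\le\sigma-2$. Thus $T$ satisfies the hypotheses of Lemma~\ref{caterpillar2}. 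By minimality of $G$, the graph $G'$ admits a strong $\sigma$-edge-coloring $\varphi$; set $C_1=\varphi(E_1)$, $C_{\ell_\sigma}=\varphi(E_{\ell_\sigma})$, $\alpha_0=\varphi(x_0x_1)$, $\alpha_{\ell_\sigma}=\varphi(x_{\ell_\sigma}x_{\ell_\sigma+1})$.

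By Lemma~\ref{caterpillar2} applied with $\kappa=\sigma$ and the data $(C;\alpha_0,C_1,C_{\ell_\sigma},\alpha_{\ell_\sigma})$, there is a strong $\sigma$-edge-coloring $\varphi'$ of $T$ agreeing with $\varphi$ on the edges incident with $x_1$ and with $x_{\ell_\sigma}$. Define $f$ on $E(G)$ to equal $\varphi'$ on $E(T)$ and $\varphi$ on $E(G')\setminus E(T)$; these two agree on the overlap. Because $\ell_\sigma\ge 7$ and all internal vertices of $x_1\cdots x_{\ell_\sigma}$ have degree $2$ in $H$, every edge of $G'\setminus E(T)$ is at distance more than $2$ in $G$ from every edge of $T$ other than those incident with $x_1$ or $x_{\ell_\sigma}$, on which $\varphi$ and $\varphi'$ coincide; hence $f$ is a strong $\sigma$-edge-coloring of $G$, a contradiction. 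Therefore $\chi'_s(G)=\sigma$.
\end{proof}
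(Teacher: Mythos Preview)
Your approach is exactly the paper's: the paper's entire ``proof'' of Theorem~\ref{main2} is the sentence ``Exploiting Lemma~\ref{caterpillar2}, the main Theorem~\ref{main} can be strengthened to\ldots'', and you have correctly unpacked this by re-running the argument of Theorem~\ref{main} with Lemma~\ref{caterpillar2} in place of Lemma~\ref{caterpillar} and observing that $g_\sigma = 5\ell_\sigma + 1$ in each regime.

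There is one small slip worth flagging. Your verification that the extracted caterpillar $T$ satisfies the hypotheses of Lemma~\ref{caterpillar2} is not quite right: the argument ``consecutive spine vertices both have degree at least $2$'' only yields $\sigma(T)\ge\Delta(T)+1$, not $\Delta(T)+2$; you also do not check $\sigma(T)\ge 5$; and since $\ell_\sigma$ is \emph{not} monotone ($\ell_5=8>7=\ell_6=\ell_7$), the condition $\ell\ge\ell_{\sigma(T)}$ can actually fail when $\sigma(T)=5$ but $\sigma(G)\in\{6,7\}$. The clean fix---which you allude to in your plan but drop from the formal proof---is Lemma~\ref{subgraphR}: pad $T$ to a caterpillar $\widetilde{T}$ of the same length with $\sigma(\widetilde{T})=\sigma(G)$ and $\Delta(\widetilde{T})\le\sigma(G)-2$ (possible since every spine degree of $T$ is already at most $\Delta(G)\le\sigma(G)-2$), apply Lemma~\ref{caterpillar2} to $\widetilde{T}$ with $\kappa=\sigma(G)$, and then restrict to $T$. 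The paper's own proof of Theorem~\ref{main} is equally terse at this step, so this is a wrinkle in the write-up rather than a gap in the strategy.
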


If we take off the condition $\sigma\geq\Delta+2$ in Theorem~\ref{main2},
a weaker result can be obtained by using the following corollary of
Lemma~\ref{caterpillar2} in the proof of  the main Theorem~\ref{main}.

\begin{cor} \label{caterpillar2_cor}
Suppose $T$ is a caterpillar tree of length $\ell$ satisfying
$$
  \sigma=\sigma(T)\geq 4 ~~~ {\rm and} ~~~
  \ell\geq \ell_{\sigma+1}, ~~~
$$
where
$$
\ell_{\sigma+1} =
\begin{cases}
  8, & \mbox{if } \sigma+1=5; \\
  7, & \mbox{if } \sigma+1=6,7; \\
  \sigma+1, & \mbox{if } \sigma+1\geq 8.
\end{cases}
$$
Then $T$ is $\kappa$-two-sided strong edge-pre-colorable for any $\kappa\geq \sigma+1$.
\end{cor}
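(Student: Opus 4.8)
\medskip

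\noindent\textbf{Proof proposal.} The plan is to derive the corollary from Lemma~\ref{caterpillar2} by an inflation argument. Starting from $T=\Cat(d_1,\dots,d_\ell)$ with $\sigma(T)=\sigma$, I would build a caterpillar tree $\widetilde T=\Cat(\tilde d_1,\dots,\tilde d_\ell)$ of the \emph{same} length with $\tilde d_i\ge d_i$ for every $i$, with $\sigma(\widetilde T)=\sigma+1$ and $\Delta(\widetilde T)\le\sigma-1$. Such a $\widetilde T$ meets every hypothesis of Lemma~\ref{caterpillar2}: its $\sigma$-value is $\sigma+1\ge5$, its length satisfies $\ell\ge\ell_{\sigma+1}=\ell_{\sigma(\widetilde T)}$, and the gap condition $\sigma(\widetilde T)\ge\Delta(\widetilde T)+2$ holds (with equality). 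Hence $\widetilde T$ is $\kappa$-two-sided strong edge-pre-colorable for every $\kappa\ge\sigma+1$, and Lemma~\ref{subgraphR} transfers this property from $\widetilde T$ to its same-length subgraph $T$, which is precisely the assertion. The shift from $\kappa\ge\sigma$ to $\kappa\ge\sigma+1$ is forced by this strategy: when $\Delta(T)=\sigma-1$, no same-length super-caterpillar of $T$ can keep $\sigma$-value $\sigma$ while obeying the gap condition, so one extra color must be paid.

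To construct $\widetilde T$, I would first record two properties of $T$ that follow from $\ell\ge\ell_{\sigma+1}\ge7$ (in particular $\ell\ge2$). Every maximum-degree vertex of $T$ lies on the spine and has a spine-neighbour of degree at least $2$, so $\sigma(T)\ge\Delta(T)+1$, i.e.\ $\Delta(T)\le\sigma-1$. Consequently every non-interior spine edge $xy$ of $T$ (a pendant edge, or one of the two end edges of the spine) has $\deg(x)+\deg(y)-1\le\Delta(T)<\sigma$, so $\sigma=\sigma(T)$ is attained at some interior spine edge $x_jx_{j+1}$, i.e.\ $d_j+d_{j+1}=\sigma+1$. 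Let $x_m$ be the endpoint of $x_jx_{j+1}$ of smaller degree, so $m\in\{j,j+1\}$ and $d_m\le\lfloor(\sigma+1)/2\rfloor$, and define $\widetilde T$ by $\tilde d_m=d_m+1$ and $\tilde d_i=d_i$ for $i\ne m$. Two routine checks finish the construction: $\tilde d_m=d_m+1\le\lfloor(\sigma+1)/2\rfloor+1\le\sigma-1$, where the last inequality uses $\sigma\ge4$, so $\Delta(\widetilde T)\le\sigma-1$; and the only edges of $\widetilde T$ whose $\deg$-sum differs from that in $T$ are the edges meeting $x_m$, none of which exceeds $\sigma+1$, while $x_jx_{j+1}$ attains exactly $\sigma+1$, so $\sigma(\widetilde T)=\sigma+1$.

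What remains is only the assembly described in the first paragraph: apply Lemma~\ref{caterpillar2} to $\widetilde T$, then Lemma~\ref{subgraphR} to the pair $T\subseteq\widetilde T$. The \textbf{main obstacle}, and the single delicate point, is the choice of inflation: it must raise the $\sigma$-value to \emph{exactly} $\sigma+1$ while keeping the maximum degree at most $\sigma-1$, and the fact that bumping by one the smaller endpoint of a tight interior edge achieves both is exactly where the hypothesis $\sigma\ge4$ is used — for $\sigma=3$ the inequality $\lfloor(\sigma+1)/2\rfloor+1\le\sigma-1$ fails, so no such single-vertex inflation is available.
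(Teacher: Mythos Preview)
Your proof is correct and follows essentially the same approach as the paper: inflate $T$ to a same-length super-caterpillar $\widetilde T$ with $\sigma(\widetilde T)=\sigma+1$ and $\sigma(\widetilde T)\ge\Delta(\widetilde T)+2$, invoke Lemma~\ref{caterpillar2} on $\widetilde T$, and then pull the conclusion back to $T$ via Lemma~\ref{subgraphR}. The only difference is cosmetic: the paper adds pendants at minimum-degree spine vertices, whereas you add a single pendant at the smaller endpoint of a tight interior spine edge and carry out the verifications explicitly.
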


\begin{proof}
Add pendant edges at some vertices of $T$ with degree $\delta(T)$
such that the resulting graph $\widetilde{T}$ has
$\sigma(\widetilde{T})=\sigma(T)+1$ and $\sigma(\widetilde{T})\geq\Delta(\widetilde{T})+2$.
So $\widetilde{T}$ satisfies the requirements of Lemma~\ref{caterpillar2},
and hence it is $\kappa$-two-sided strong edge-pre-colorable for any
$\kappa\geq \sigma(\widetilde{T})=\sigma(T)+1$.
The corollary then follows from Lemma~\ref{subgraphR}.
\end{proof}

\begin{thm} \label{main2_cor}
If $G$ is a planar graph with $\sigma=\sigma(G)\geq 4$ and girth at least $g_{\sigma+1}$,
where
$$
g_{\sigma+1} =
\begin{cases}
  41, & \mbox{if } \sigma+1=5; \\
  36, & \mbox{if } \sigma+1=6,7; \\
  5\sigma+6, & \mbox{if } \sigma+1\geq 8,
\end{cases}
$$
then $\sigma\leq\chi_{s}'(G)\leq\sigma+1$.
\end{thm}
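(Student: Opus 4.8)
The inequality $\sigma\le\chi'_s(G)$ is trivial, so the plan is to establish $\chi'_s(G)\le\sigma+1$ by rerunning the argument that proves Theorem~\ref{main}, with the number of available colours raised from $\sigma$ to $k:=\sigma+1$ and with Corollary~\ref{caterpillar2_cor} taking over the role of Lemma~\ref{caterpillar}. So take a counterexample $G$ on the fewest vertices. The leaf reduction is unchanged: if a vertex $x$ is adjacent to $\deg(x)-1$ leaves, the remaining edge $xy$ is a cut edge with $\deg(x)+\deg(y)-1\le\sigma(G)=\sigma<k$, so Lemma~\ref{cut} together with minimality yields a contradiction; hence $H:=G-\{v:\deg(v)=1\}$ has $\delta(H)\ge 2$ and the same girth as $G$. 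Since $\mathrm{girth}(G)\ge g_{\sigma+1}=5\ell_{\sigma+1}+1$, Lemma~\ref{p_lg_path} supplies an $\ell_{\sigma+1}$-thread $x_0x_1\cdots x_{\ell_{\sigma+1}+1}$ in $H$; exactly as in Theorem~\ref{main} we delete the leaf-neighbours of $x_2,\dots,x_{\ell_{\sigma+1}-1}$ and the vertices $x_3,\dots,x_{\ell_{\sigma+1}-2}$ to obtain a proper subgraph $G'$, and we let $T$ be the caterpillar induced by $x_1,\dots,x_{\ell_{\sigma+1}}$ and their neighbours, so $T$ has length $\ell_{\sigma+1}$ and $\sigma(T)\le\sigma$.

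The key point is that $T$ is $(\sigma+1)$-two-sided strong edge-pre-colorable; granting this, one pre-colours $T$ with the data read off the edges at $x_1$ and $x_{\ell_{\sigma+1}}$ from a strong $(\sigma+1)$-edge-colouring of $G'$ (which exists by minimality) and glues the two colourings along those edges exactly as in Theorem~\ref{main} — the edges of $T$ not present in $G'$ lie strictly inside the thread and, because $\ell_{\sigma+1}\ge7$, are at distance more than two from every edge of $G'$ not incident to $x_1$ or $x_{\ell_{\sigma+1}}$. To see that $T$ is $(\sigma+1)$-two-sided strong edge-pre-colorable I would distinguish two cases. If $\sigma(T)\ge4$ and $\ell_{\sigma+1}\ge\ell_{\sigma(T)+1}$, then Corollary~\ref{caterpillar2_cor} applies verbatim to $T$ and yields $\kappa$-two-sided strong edge-pre-colorability for every $\kappa\ge\sigma(T)+1$, in particular for $\kappa=\sigma+1$; here $\ell_{\sigma+1}\ge\ell_{\sigma(T)+1}$ is automatic when $\sigma(T)=\sigma$ and, checking the definition of $\ell$, holds for all $4\le\sigma(T)\le\sigma$ with the single exception $\sigma(T)=4,\ \sigma\in\{5,6\}$ (where $\ell_5=8>7=\ell_6=\ell_7$). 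In the remaining cases — $\sigma(T)\le3$, so that $T$ is a path, or $\sigma(T)=4$ with $\sigma\in\{5,6\}$ — one has $\Delta(T)\le3$ (in a connected caterpillar of length at least $7$, $\sigma(T)\le4$ rules out a vertex of degree $4$), so $T$ embeds, keeping the same length, into a caterpillar $\widetilde T$ obtained by adding pendant edges at two adjacent spine vertices so that $\sigma(\widetilde T)=5$ if $\sigma=4$ and $\sigma(\widetilde T)=6$ if $\sigma\ge5$, with $\sigma(\widetilde T)\ge\Delta(\widetilde T)+2$ and $\ell_{\sigma+1}\ge\ell_{\sigma(\widetilde T)}$; Lemma~\ref{caterpillar2} then gives $\kappa$-two-sided strong edge-pre-colorability of $\widetilde T$ for $\kappa\ge\sigma(\widetilde T)$, which covers $\kappa=\sigma+1$, and Lemma~\ref{subgraphR} transfers it to $T$.

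The case analysis of the second paragraph is the genuinely new ingredient relative to Theorem~\ref{main}: it is precisely what makes a thread of length $\ell_{\sigma+1}$ (rather than $\sigma+3$) suffice, and I expect the tight instance $\sigma(T)=4$ with $\sigma\in\{5,6\}$, rescued by the pendant-padding together with Lemma~\ref{subgraphR}, to be the step that requires the most care. The only other delicate point — already implicit in the proof of Theorem~\ref{main} — is the appeal to minimality for the bound $\chi'_s\le\sigma+1$ on the proper subgraphs produced (namely $G'$, and the component $G_2$ in the leaf reduction): when such a subgraph $G^\ast$ fails the hypotheses of the theorem one must still show $\chi'_s(G^\ast)\le\sigma+1$, and since $\sigma(G^\ast)\le\sigma$ forces $G^\ast$ to be sparse (indeed $\Delta(G^\ast)\le3$ once $\sigma(G^\ast)\le4$) and of girth at least that of $G$, this follows from the known strong-edge-colouring bounds for sparse planar graphs of large girth; the narrowest situation, $\sigma(G^\ast)=4$ with $\mathrm{girth}(G^\ast)\ge 36$ in the case $\sigma\in\{5,6\}$, may need an extra reduction along the thread (deleting a single leaf at a thread vertex of degree at least $3$) to close.
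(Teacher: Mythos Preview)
Your approach is exactly the paper's: the paper states Theorem~\ref{main2_cor} immediately after Corollary~\ref{caterpillar2_cor} with no separate proof, the intent being simply to rerun the proof of Theorem~\ref{main} with $\sigma+1$ colours and Corollary~\ref{caterpillar2_cor} in place of Lemma~\ref{caterpillar}. Your write-up is already more detailed than what the paper supplies. In particular, your observation that Corollary~\ref{caterpillar2_cor} does not literally apply to $T$ when $\sigma(T)<4$, or when $\sigma(T)=4$ and $\sigma(G)\in\{5,6\}$ (because $\ell_5=8>7=\ell_6=\ell_7$), is a genuine detail the paper elides, and your padding-plus-Lemma~\ref{subgraphR} fix for it is correct.

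Your final paragraph, however, is over-engineered. The minimality issue you flag is real but has a standard one-line resolution: for each fixed $\sigma_0\ge 4$, prove instead the (formally stronger) statement ``every planar $G$ with $\sigma(G)\le\sigma_0$ and girth $\ge g_{\sigma_0+1}$ satisfies $\chi'_s(G)\le\sigma_0+1$'', and take $G$ to be a minimum counterexample to \emph{that}. Both hypotheses are monotone under passing to subgraphs ($\sigma$ can only drop, girth can only rise), so minimality applies directly to $G'$ and to $G_2$ with no side cases; your ``narrowest situation'' never arises, and no appeal to external bounds for sparse planar graphs is needed. The original theorem then follows by taking $\sigma_0=\sigma(G)$.
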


\section{Consequences concerning the maximum average degree}

The following lemma is a direct consequence of Proposition 2.2 in~\cite{CW13}.

\begin{lem} \label{small_ad_path}
Suppose the connected graph $G$ is not a cycle. If $G$ has minimum degree at least $2$
and average degree $\frac{2|E|}{|V|}<2+\frac{2}{3\ell-1}$, then $G$
contains an $\ell$-thread.
\end{lem}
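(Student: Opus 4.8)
The plan is to argue by contraposition through the \emph{suppressed graph}: assuming $G$ has no $\ell$-thread, I will show its average degree is forced up to at least $2+\frac{2}{3\ell-1}$, contradicting the hypothesis. Because $G$ is connected, has minimum degree at least $2$ and is not a cycle, it has at least one \emph{branch vertex} (of degree $\geq 3$); let $B$ be the set of branch vertices. Suppressing every degree-$2$ vertex, that is, replacing each maximal thread by a single edge, produces a multigraph $G'$ on vertex set $B$ in which each $b$ retains degree $\deg_G(b)\geq 3$, and whose edges correspond bijectively to the maximal threads of $G$.

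First I would record the bookkeeping identities. If a thread carries $k_e\geq 0$ internal degree-$2$ vertices, then it contributes $k_e$ vertices and $k_e+1$ edges to $G$, so summing over the edges $e$ of $G'$ gives
\[
  |V(G)| = |B| + \sum_{e} k_e
  \qquad\text{and}\qquad
  |E(G)| = |E(G')| + \sum_{e} k_e .
\]
Hence $2|E(G)|-2|V(G)| = 2\bigl(|E(G')|-|B|\bigr)$. The handshake identity on $G'$ together with $\delta(G')\geq 3$ yields $2|E(G')| = \sum_{b\in B}\deg_{G'}(b)\geq 3|B|$, so $|B|\leq\frac{2}{3}|E(G')|$ and therefore $|E(G')|-|B|\geq\frac13|E(G')|$.

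Next I would feed in the no-$\ell$-thread assumption, which bounds each thread by $k_e\leq \ell-1$, whence $\sum_e k_e\leq(\ell-1)|E(G')|$. This gives $|V(G)|\leq(\ell-1)|E(G')|+\frac23|E(G')|=\frac{3\ell-1}{3}|E(G')|$, and combining the two estimates,
\[
  \frac{2|E(G)|}{|V(G)|}-2
  = \frac{2\bigl(|E(G')|-|B|\bigr)}{|V(G)|}
  \geq \frac{\tfrac23|E(G')|}{\tfrac{3\ell-1}{3}|E(G')|}
  = \frac{2}{3\ell-1},
\]
the desired contradiction. The estimate is tight: the theta-type graph whose two branch vertices have degree exactly $3$ and whose three threads each carry exactly $\ell-1$ internal vertices meets the bound with equality, which is precisely what pins the constant to $\frac{2}{3\ell-1}$.

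The main obstacle is the step $k_e\leq \ell-1$, i.e.\ confirming that the absence of an $\ell$-thread genuinely bounds \emph{every} maximal thread. For a thread joining two distinct branch vertices this is immediate from the definition. The delicate case is a \emph{closed} thread, a cycle through a single branch vertex (a loop of $G'$): here the induced-path requirement means such a cycle must be slightly longer before it contains an $\ell$-thread, so the crude bound $k_e\le\ell-1$ can fail and a long low-density thread elsewhere could dilute the average. This is exactly the point at which the discharging argument of Proposition~2.2 in~\cite{CW13} does its work; in the applications of this section the girth hypotheses force every cycle, hence every closed thread, to be long, so $k_e\le\ell-1$ holds throughout and the clean double count above suffices.
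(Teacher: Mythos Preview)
The paper gives no proof here; it simply records the lemma as a consequence of Proposition~2.2 in~\cite{CW13}. Your suppressed-multigraph counting is therefore already more than the paper offers, and the bookkeeping $2|E(G)|-2|V(G)|=2(|E(G')|-|B|)$ together with $|E(G')|-|B|\ge\tfrac13|E(G')|$ from $\delta(G')\ge3$ is correct. Granted $k_e\le\ell-1$ for every thread, the conclusion follows exactly as you wrote.

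The trouble is your last paragraph. You correctly flag closed threads as the obstacle, but the sentence ``the girth hypotheses force every closed thread to be long, so $k_e\le\ell-1$ holds throughout'' is backwards: a long closed thread has \emph{large} $k_e$, so the inequality fails more badly, not less. What you presumably mean is that large girth forces any closed thread to already contain an $\ell$-thread, so none survive the contrapositive hypothesis --- but the lemma as stated carries no girth assumption, so this does not prove it. You also miss a second bad case: a thread running parallel to a direct edge $uv$ is not an induced path (since $u\sim v$), and one can only extract a $(k_e-1)$-thread from it, allowing $k_e=\ell$. These cases genuinely break the count: two $5$-cycles sharing a single vertex have average degree $20/9<2+\tfrac14$ yet contain no $3$-thread under the paper's ``induced path'' definition. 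So either the lemma is tacitly using the looser (non-induced, trail-based) notion of thread common in~\cite{CW13}, or one really must invoke that reference for the residual cases. Your sketch is the right skeleton, but as written it neither proves the lemma in full generality nor states cleanly that it is deferring the boundary cases to~\cite{CW13} --- which, in the end, is exactly what the paper itself does.
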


A $C_n$-jellyfish is a graph by adding pendant edges at the vertices of $C_n$. In~\cite{CC+2015},
it is shown that

\begin{prop} \label{cactus}
If $G$ is a $C_n$-jellyfish of $m$ edges with $\sigma(G) \ge 4$, then $\chi_s'(G)=$
\begin{eqnarray*}\hskip -1cm
&&\left\{\begin{array}{ll}
      m,             & \mbox{if $n=3$}; \\
      \sigma(G)+1,   & \mbox{if $n=4$}; \\
      \lceil \frac{m}{\lfloor n/2 \rfloor} \rceil,
                     & \mbox{otherwise, if $n$ is odd with all $\deg(v_i)=d$ but $(n,d) \ne (7,3)$,} \\
                     & \mbox{\hskip 1.9cm or with $\lceil \frac{m}{\lfloor n/2 \rfloor}\rceil\ge\sigma(G)+1$}; \\
      \sigma(G)+1,   & \mbox{otherwise, if $(n,d) = (7,3)$ with all $\deg(v_i)=d$},\\
                     & \mbox{\hskip 1.9cm or $n\not\equiv 0\ {\rm (mod}\ 3)$ such that up to rotation} \\
                     & \mbox{\hskip 2.4cm $\deg(v_i)=\sigma(G)-1$ for $i\equiv 1\ ({\rm mod\ 3})$ with $1 \le i \le 3\lfloor \frac{n}{3} \rfloor -2$,} \\
                     & \mbox{\hskip 1.9cm or $(n,\sigma(G))=(10,4)$ with $\deg(v_i)=3$ for all odd or all even $i$}; \\
      \sigma(G),     & \mbox{otherwise}.
\end{array} \right.
\end{eqnarray*}
\end{prop}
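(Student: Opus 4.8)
The plan is to pin down $\chi'_s(G)$ in every case by squeezing it between two lower bounds and a matching constructive upper bound. The trivial bound $\chi'_s(G)\ge\sigma(G)$ always holds, coming from the antimatching of edges around a heaviest cycle edge. A second, genuinely different lower bound is the counting bound: since each color class is an induced matching, if the maximum induced matching of $G$ has at most $\mu$ edges then $\chi'_s(G)\ge\lceil m/\mu\rceil$. So the first quantitative task is to bound $\mu$ for a $C_n$-jellyfish, after which the answer in most cases is simply $\max\{\sigma(G),\lceil m/\mu\rceil\}$; the remaining task is to isolate exactly the configurations in which a cyclic obstruction forces one extra color.

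To bound $\mu$ I would use a footprint argument on the cycle. A pendant edge at $v_i$ and a pendant at $v_j$ lie in a common induced matching iff $v_i,v_j$ are non-adjacent on $C_n$, so each matched pendant occupies its cycle vertex plus a mandatory gap (footprint $\ge 2$), while each matched cycle edge occupies two vertices plus gaps (footprint $\ge 3$). Summing footprints cyclically gives $\mu\le\lfloor n/2\rfloor$, hence $\chi'_s(G)\ge\lceil m/\lfloor n/2\rfloor\rceil$ for every $n\ge 3$. The two degenerate small cases I would dispatch directly: for $n=3$ every pair of edges is at distance at most two, so all $m$ edges pairwise conflict and $\chi'_s(G)=m$ (consistent with $\lfloor 3/2\rfloor=1$); for $n=4$ the even four-cycle already forces $\chi'_s(C_4)=4>3=\sigma(C_4)$, and attaching pendants preserves this slack, giving $\chi'_s(G)=\sigma(G)+1$.

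The delicate lower bounds are the $\sigma(G)+1$ entries for the non-divisible and sporadic configurations, where both trivial and counting bounds equal $\sigma$ yet the true value is $\sigma+1$. For the family with $n\not\equiv 0\pmod 3$ and $\deg(v_i)=\sigma-1$ along the positions $i\equiv 1\pmod 3$, I would run exactly the cyclic monodromy argument used for $G_{3n+1,d}$ in the introduction: assuming a strong $\sigma$-edge-coloring, the $\sigma-1$ edges at each heavy vertex together with one flanking cycle edge exhaust all $\sigma$ colors, forcing the two cycle edges on either side of that vertex to share a color; propagating this period-three forcing once around a cycle of length $\not\equiv 0\pmod 3$ equates two adjacent edges, a contradiction, so $\chi'_s(G)\ge\sigma+1$. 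The two arithmetic exceptions $(n,d)=(7,3)$ and $(n,\sigma)=(10,4)$ I would settle by a direct finite check of the same flavor, verifying that $\sigma$ colors cannot close up consistently around the short cycle.

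For the upper bounds I would exhibit colorings matching each target. In the generic case I color the cycle edges by a near-periodic pattern compatible with period three and then fill the pendant edges greedily, using that each heavy vertex leaves at least one free color, which realizes $\chi'_s(G)=\sigma(G)$. When the counting bound dominates, in particular for regular odd cycles with $(n,d)\ne(7,3)$ or whenever $\lceil m/\lfloor n/2\rfloor\rceil\ge\sigma+1$, I instead partition $E(G)$ equitably into induced matchings of $\lfloor n/2\rfloor$ edges (placing pendants on an independent set of the cycle), attaining $\lceil m/\lfloor n/2\rfloor\rceil$; and in the $\sigma+1$ cases one extra color provides precisely the slack that breaks the period-three forcing. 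The main obstacle is the boundary bookkeeping: confirming that the cyclic forcing is genuinely unavoidable in each exceptional configuration (so that $\sigma$ colors truly fail) while simultaneously checking that the generic greedy and equitable-partition constructions never get stuck, together with a direct handling of the sporadic instances $(7,3)$ and $(10,4)$.
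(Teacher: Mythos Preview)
The paper does not prove Proposition~\ref{cactus} at all: it is quoted verbatim as a known result from \cite{CC+2015} and used only as a black box in Section~5. There is therefore no ``paper's own proof'' to compare your attempt against.

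As a standalone sketch, your architecture is the right one and matches what the cited jellyfish paper actually does: the two lower bounds (the trivial $\sigma$ bound and the counting bound $\lceil m/\lfloor n/2\rfloor\rceil$ via the induced-matching size of a $C_n$-jellyfish), the period-three monodromy argument to force $\sigma+1$ in the non-divisible configurations, and explicit colorings for the upper bounds. Your footprint bound $\mu\le\lfloor n/2\rfloor$ and the treatment of $n=3,4$ are correct. Where your proposal is thin is exactly where the real work lies: the upper-bound constructions. Phrases like ``near-periodic pattern compatible with period three and then fill the pendant edges greedily'' and ``partition $E(G)$ equitably into induced matchings of $\lfloor n/2\rfloor$ edges'' describe the \emph{goal}, not the argument; in \cite{CC+2015} these constructions occupy most of the paper and require a careful case split on the residue of $n$ and on the degree sequence, together with the verification that the listed $\sigma+1$ configurations are \emph{exactly} the obstructions (i.e.\ that every other jellyfish with $\lceil m/\lfloor n/2\rfloor\rceil\le\sigma$ really does admit a strong $\sigma$-edge-coloring). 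Your plan correctly identifies this boundary bookkeeping as the main obstacle but does not supply it.
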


Adopting these results leads to a strengthening of Theorem~\ref{mad_WZ}.

\begin{thm}
If $G$ is a graph with $\sigma=\sigma(G)\geq 5$,
$\sigma\geq\Delta(G)+2$, odd girth at least $g'_\sigma$,
even girth at least 6,
and $\mad(G)<2+\frac{2}{3\ell_\sigma-1}$,
where
$$
g'_\sigma =
\begin{cases}
  9, & \mbox{if } \sigma=5; \\
  \sigma, & \mbox{if } \sigma>5,
\end{cases}
~~~
{\rm and}
~~~
\ell_\sigma =
\begin{cases}
  8, & \mbox{if } \sigma=5; \\
  7, & \mbox{if } \sigma=6,7; \\
  \sigma, & \mbox{if } \sigma\geq 8,
\end{cases}
$$
then $\chi_{s}'(G)=\sigma$.
\end{thm}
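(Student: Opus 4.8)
\medskip
\noindent\textbf{Plan of proof.}
The plan is to run the proof of Theorem~\ref{main} — more precisely, of its refinement Theorem~\ref{main2} — almost word for word, with two changes: the planar thread lemma (Lemma~\ref{p_lg_path}) is replaced by its maximum-average-degree counterpart (Lemma~\ref{small_ad_path}), and one extra case, handled by Proposition~\ref{cactus}, is inserted for the situation in which the ``core'' $H:=G-\{v:\deg_G(v)=1\}$ is a cycle. Concretely, I would fix $\sigma\ge5$ and prove, by induction on $|V(G)|$, that every graph $G$ with $\sigma(G)\le\sigma$, $\Delta(G)\le\sigma-2$, odd girth $\ge g'_\sigma$, even girth $\ge6$, and $\mad(G)<2+\frac{2}{3\ell_\sigma-1}$ satisfies $\chi'_s(G)\le\sigma$; the theorem is then the special case $\sigma=\sigma(G)$, since $\chi'_s(G)\ge\sigma(G)$ always. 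Each of these hypotheses is inherited by every induced subgraph of $G$, so the inductive hypothesis is available for all the reductions below; one may assume $G$ is connected and not a star (a star $K_{1,k}$ has $\chi'_s=k\le\Delta\le\sigma-2$). If some vertex $x$ is adjacent to $\deg(x)-1$ leaves, then $xy$ is a cut edge with $y$ not a leaf, and Lemma~\ref{cut} — its star side being a $K_{1,\deg(x)}$ with $\chi'_s=\deg(x)\le\sigma-2$, and its other side being an induced subgraph of $G$ covered by induction — yields $\chi'_s(G)\le\sigma$. So assume no such vertex exists; then $H$ is connected, has $\delta(H)\ge2$, has the same odd and even girth as $G$ (leaves lie on no cycle), and satisfies $\frac{2|E(H)|}{|V(H)|}\le\mad(H)\le\mad(G)<2+\frac{2}{3\ell_\sigma-1}$.

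\emph{Case 1: $H$ is a cycle $C_n$.} Then $G$ is a $C_n$-jellyfish (if $G$ is the bare cycle then $n\ge6$ and $\chi'_s(C_n)\le4\le\sigma$; otherwise $\sigma(G)\ge4$ and Proposition~\ref{cactus} applies). I would check that, under the present hypotheses, no subcase of Proposition~\ref{cactus} giving $\chi'_s(G)>\sigma(G)$ can occur: $n=3$ and $n=5$ are ruled out by odd girth $\ge g'_\sigma\ge6$, and $n=4$ by even girth $\ge6$; the patterns $(n,d)=(7,3)$ and $(n,\sigma(G))=(10,4)$ force $\sigma(G)=5$ with $n=7<9=g'_5$ and $\sigma(G)=4<5$ respectively; the rotational pattern with $\deg(v_i)=\sigma(G)-1$ forces $\Delta(G)=\sigma(G)-1$, contradicting $\sigma\ge\Delta+2$. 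In the surviving odd-$n$ subcase, since every cycle edge satisfies $\deg(v_i)+\deg(v_{i+1})-1\le\sigma(G)\le\sigma$ we get $m=\sum_i\deg(v_i)-n\le\frac{n(\sigma-1)}{2}$, whence $\bigl\lceil m/\lfloor n/2\rfloor\bigr\rceil=\bigl\lceil\frac{2m}{n-1}\bigr\rceil\le\sigma$ because $n\ge g'_\sigma\ge\sigma$. In all cases $\chi'_s(G)\le\sigma$.

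\emph{Case 2: $H$ is not a cycle.} Then $H$ is a connected non-cycle with $\delta(H)\ge2$ and average degree below $2+\frac{2}{3\ell_\sigma-1}$, so Lemma~\ref{small_ad_path} applied with $\ell=\ell_\sigma$ produces an $\ell_\sigma$-thread $x_0x_1\cdots x_{\ell_\sigma+1}$ of $H$, with $\deg_H(x_i)=2$ for $1\le i\le\ell_\sigma$. From here the argument copies that of Theorem~\ref{main}: let $G'$ be obtained from $G$ by deleting the interior spine vertices $x_3,\dots,x_{\ell_\sigma-2}$ together with the pendant neighbors of $x_2,\dots,x_{\ell_\sigma-1}$; by the inductive hypothesis $G'$ has a strong $\sigma$-edge-colouring $\varphi$. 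Let $T$ be the caterpillar subtree of $G$ spanned by $x_1,\dots,x_{\ell_\sigma}$ and their neighbors; since each edge $x_ix_{i+1}$ of $G$ satisfies $\deg_G(x_i)+\deg_G(x_{i+1})-1\le\sigma$, one has $\sigma(T)\le\sigma$ and $\Delta(T)\le\sigma-2$. Adding pendant edges to $T$ as in the proof of Corollary~\ref{caterpillar2_cor}, I would pass to a caterpillar $\widetilde T\supseteq T$ of the same length $\ell_\sigma$ with $\sigma(\widetilde T)=\sigma$ and $\Delta(\widetilde T)\le\sigma-2$; since $\widetilde T$ is then a nice caterpillar of length $\ell_\sigma$, Lemma~\ref{caterpillar2} shows it is $\sigma$-two-sided strong edge-pre-colourable, hence so is $T$ by Lemma~\ref{subgraphR}. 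Reading off from $\varphi$ the tuple $\bigl(C;\varphi(x_0x_1),\varphi(E_1),\varphi(E_{\ell_\sigma}),\varphi(x_{\ell_\sigma}x_{\ell_\sigma+1})\bigr)$ and using $\sigma$-two-sided pre-colourability of $T$, one obtains a strong $\sigma$-colouring $\varphi'$ of $T$ agreeing with $\varphi$ on the edges incident to $x_1$ and to $x_{\ell_\sigma}$ (the pendant edges there being freely permutable on each side, since $x_2$ and $x_{\ell_\sigma-1}$ are leaves of $G'$); gluing $\varphi$ and $\varphi'$ produces a strong $\sigma$-edge-colouring of $G$, completing the induction.

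I expect the main obstacle to be Case~1: the thresholds $g'_\sigma$ are calibrated precisely so that the finitely many exceptional jellyfishes in Proposition~\ref{cactus} are each killed by one of $\sigma\ge5$, $\sigma\ge\Delta+2$, or the girth bounds, and one must verify the arithmetic inequality $\lceil m/\lfloor n/2\rfloor\rceil\le\sigma$ in the remaining odd-$n$ subcase. A secondary subtlety — caused by $\ell_\sigma$ not being monotone in $\sigma$ (for example $\ell_5=8>\ell_6=7$) — is the passage in Case~2 from $T$ to the nice caterpillar $\widetilde T$ with $\sigma(\widetilde T)=\sigma$: one must ensure the needed pendant edges can be added while keeping $\Delta(\widetilde T)\le\sigma-2$, which works because $\sigma\ge5$ gives $\sigma+1\le2(\sigma-2)$ and the thread is long enough ($\ell_\sigma\ge7$).
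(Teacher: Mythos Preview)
Your plan matches the paper's proof almost exactly: run the argument of Theorem~\ref{main2} with Lemma~\ref{small_ad_path} in place of Lemma~\ref{p_lg_path}, and dispose of the extra case $H=C_n$ via Proposition~\ref{cactus} together with the estimate $\lceil m/\lfloor n/2\rfloor\rceil\le\sigma$ for odd $n\ge g'_\sigma\ge\sigma$. You in fact supply more detail than the paper (which spends three sentences here), notably the explicit inductive strengthening to ``$\sigma(G)\le\sigma$, $\Delta(G)\le\sigma-2$'' and the padding of $T$ to a nice caterpillar with $\sigma(\widetilde T)=\sigma$ via Lemma~\ref{subgraphR} --- a point the paper leaves implicit in its appeal to Lemma~\ref{caterpillar2}.

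One small slip to fix in Case~1: in your inductive framework the odd-girth hypothesis is $\ge g'_\sigma$ for the \emph{fixed} $\sigma$, not $g'_{\sigma(G)}$. Thus for $\sigma\in\{6,7\}$ the pattern $(n,d)=(7,3)$ (which has $\sigma(G)=5$) is \emph{not} excluded by ``$7<9=g'_5$'', since $g'_\sigma\le 7$ there; likewise the rotational pattern and $(n,\sigma(G))=(10,4)$ may occur when $\sigma(G)<\sigma$. This does no damage, because in each such subcase Proposition~\ref{cactus} gives $\chi'_s(G)=\sigma(G)+1\le\sigma$, which is all the induction needs; your exclusion arguments are only required, and are valid, when $\sigma(G)=\sigma$.
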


\begin{proof}
In the proof of Theorem~\ref{main2}, alternatively use Lemma~\ref{small_ad_path} to find
an $\ell_\sigma$-thread in $H$.
It should be noticed the girth constraints exist merely to address the problem
that $H$ may be a cycle. In this case, by Proposition~\ref{cactus},
$G$ still has strong chromatic index $\sigma$.

Indeed, suppose $H=C_n$ and $G$ is a $C_n$-jellyfish.
The case $n$ is even is trivial. If $\sigma\geq\sigma(H)\geq 5$, $n$ is odd and
$n\geq g'_\sigma\geq\sigma$, then
$$
\Bigl\lceil\frac{|E(G)|}{\lfloor\frac{n}{2}\rfloor}\Bigr\rceil
\leq \Bigl\lceil\frac{\frac{n-1}{2}(\sigma-1)+\frac{\sigma+1}{2}-1}{\frac{n-1}{2}}\Bigr\rceil
\leq \sigma.
$$
Hence $\chi'_s(G)=\sigma$.
\end{proof}

Similarly, Theorem~\ref{main2_cor} can be modified correspondingly.

\begin{thm}
If $G$ is a graph with $\sigma=\sigma(G)\geq 4$,
odd girth at least $\frac{\sigma+1}{2}$,
and $\mad(G)<2+\frac{2}{3\ell_{\sigma+1}-1}$,
where
$$
\ell_{\sigma+1} =
\begin{cases}
  8, & \mbox{if } \sigma+1=5; \\
  7, & \mbox{if } \sigma+1=6,7; \\
  \sigma+1, & \mbox{if } \sigma+1\geq 8,
\end{cases}
$$
then $\sigma\leq\chi_{s}'(G)\leq\sigma+1$.
\end{thm}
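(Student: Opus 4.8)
\medskip

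\noindent
The inequality $\chi'_{s}(G)\geq\sigma$ is immediate (the edges incident with any one edge form an antimatching), so only $\chi'_{s}(G)\leq\sigma+1$ requires proof. The plan is to run the argument of Theorem~\ref{main2_cor} with two changes: Lemma~\ref{p_lg_path} (which needs planarity and a large girth) is replaced by Lemma~\ref{small_ad_path} (which needs a small average degree), and the degenerate case where the ``reduced'' graph is a cycle is handled by Proposition~\ref{cactus}, the odd girth hypothesis being tailored precisely to that case.

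\medskip

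\noindent
First take a counterexample $G$ with the fewest vertices. Since trees have strong chromatic index $\sigma$, $G$ contains a cycle, and we may assume $G$ connected. As in the proof of Theorem~\ref{main}, Lemma~\ref{cut} applied with $k=\sigma+1$ (valid because $\deg(x)+\deg(y)-1\leq\sigma\leq\sigma+1$ for every edge $xy$) together with minimality forbids a vertex $x$ adjacent to $\deg(x)-1$ leaves, so $H:=G-\{v:\deg(v)=1\}$ is connected, contains a cycle, has $\delta(H)\geq2$, and $\mad(H)\leq\mad(G)<2+\tfrac{2}{3\ell_{\sigma+1}-1}$. If $H$ is a cycle $C_{n}$, then $G$ is a $C_{n}$-jellyfish with $\sigma(G)\geq4$: for even $n$ Proposition~\ref{cactus} already gives $\chi'_{s}(G)\leq\sigma+1$, while for odd $n$ the odd girth of $G$ equals $n\geq\tfrac{\sigma+1}{2}$, so
$$
  |E(G)|=\sum_{i=1}^{n}\deg_{G}(v_{i})-n\leq\frac{n(\sigma+1)}{2}-n=\frac{n(\sigma-1)}{2}\leq(\sigma+1)\Big\lfloor\tfrac{n}{2}\Big\rfloor ,
$$
and hence every value listed in Proposition~\ref{cactus} (among them $\lceil|E(G)|/\lfloor n/2\rfloor\rceil$, $\sigma(G)+1$, and $|E(G)|$ in the triangle case) is at most $\sigma+1$. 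Either way $G$ is not a counterexample.

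\medskip

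\noindent
So $H$ is connected, not a cycle, with $\delta(H)\geq2$ and small average degree, and Lemma~\ref{small_ad_path} produces an $\ell$-thread $x_{0}x_{1}\cdots x_{\ell+1}$ in $H$ with $\ell=\ell_{\sigma+1}$ and $\deg_{H}(x_{i})=2$ for $1\leq i\leq\ell$. Let $G'$ be $G$ with the leaf-neighbours of $x_{2},\dots,x_{\ell-1}$ and the vertices $x_{3},\dots,x_{\ell-2}$ deleted; then $G'$ has fewer vertices, inherits the average-degree and odd-girth hypotheses, and satisfies $4\leq\sigma(G')\leq\sigma$ (the lower bound because $H$ has a vertex of degree $\geq3$ surviving in $G'$ with a neighbour of degree $\geq2$). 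By minimality $G'$ has a strong $(\sigma+1)$-edge-colouring $\varphi$. Let $T$ be the caterpillar subgraph of $G$ induced by $x_{1},\dots,x_{\ell}$ and their neighbours, of length $\ell$ with $\sigma(T)\leq\sigma$. As $\sigma(T)$ may be as small as $3$ while $\sigma\mapsto\ell_{\sigma+1}$ is not monotone, enlarge $T$ to a caterpillar $\widetilde T$ of the same length by adding pendant edges at $x_{2}$ only (so $E_{1}$ and $E_{\ell}$ are unchanged), chosen so that $\sigma(\widetilde T)=\sigma$; this is possible since $\max(\deg_{T}x_{1},\deg_{T}x_{3})+\deg_{T}x_{2}\leq\sigma+1$. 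Now $\sigma(\widetilde T)=\sigma\geq4$ and the length of $\widetilde T$ is $\ell=\ell_{\sigma+1}=\ell_{\sigma(\widetilde T)+1}$, so Corollary~\ref{caterpillar2_cor} shows $\widetilde T$ is $(\sigma+1)$-two-sided strong edge-pre-colourable, hence so is $T$ by Lemma~\ref{subgraphR}. Supplying the colour sets $\varphi(E_{1}),\varphi(E_{\ell})$ and the colours $\varphi(x_{0}x_{1}),\varphi(x_{\ell}x_{\ell+1})$ as pre-colouring data yields a strong $(\sigma+1)$-edge-colouring $\varphi'$ of $T$; after permuting $\varphi$ among the leaf-edges at $x_{1}$ and at $x_{\ell}$ (mutually interchangeable in $G'$, where $x_{2}$ and $x_{\ell-1}$ have become leaves), $\varphi$ and $\varphi'$ agree edge-by-edge on $E(G')\cap E(T)=E_{1}\cup E_{\ell}$. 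Their common extension is a strong $(\sigma+1)$-edge-colouring of $G$, proper because, by the local structure of a thread alone (no lower bound on the girth of $G$ is required), no edge of $E(G')\setminus E(T)$ lies within distance two of any edge of $E(T)\setminus E(G')$. This contradicts the choice of $G$.

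\medskip

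\noindent
The step I expect to require the most care is the cycle case: checking that ``odd girth $\geq\tfrac{\sigma+1}{2}$'' is exactly enough to keep every value in Proposition~\ref{cactus}, including its several exceptional sub-cases, below $\sigma+2$. The only other delicate point is the detour through $\widetilde T$ and Lemma~\ref{subgraphR}, forced by the fact that $\sigma(T)$ can be strictly smaller than $\sigma$ and that $\sigma\mapsto\ell_{\sigma+1}$ is non-monotone; with these in hand the reduction and the gluing are routine and mirror the proof of Theorem~\ref{main}.
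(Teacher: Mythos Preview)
Your argument follows exactly the route the paper indicates (it writes only ``Similarly, Theorem~\ref{main2_cor} can be modified correspondingly''): swap Lemma~\ref{p_lg_path} for Lemma~\ref{small_ad_path}, dispose of the case where $H$ is a cycle via Proposition~\ref{cactus} using the odd-girth bound, and otherwise excise a thread and glue in a caterpillar colouring supplied by Corollary~\ref{caterpillar2_cor}. Your cycle computation and the detour through $\widetilde T$ (to cope with the non-monotonicity of $\sigma\mapsto\ell_{\sigma+1}$) are correct and more careful than anything the paper spells out.

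One point to tighten: the minimality, as you phrase it, is over counterexamples to the theorem as stated, so to invoke it on $G'$ you need $G'$ to satisfy the hypotheses with its \emph{own} $\sigma(G')$. Odd girth transfers, but the bound $\mad(G')<2+\tfrac{2}{3\ell_{\sigma(G')+1}-1}$ does not follow from the corresponding bound on $G$ when $\sigma\in\{5,6\}$ and $\sigma(G')$ happens to drop to $4$, since $\ell_{5}=8>7=\ell_{6}=\ell_{7}$. The cure is routine: fix $\sigma$ at the outset and prove instead that every graph with $\sigma(\cdot)\le\sigma$, odd girth $\ge\tfrac{\sigma+1}{2}$, and $\mad<2+\tfrac{2}{3\ell_{\sigma+1}-1}$ satisfies $\chi'_{s}\le\sigma+1$; then every subgraph inherits the hypotheses for the same $\sigma$ trivially, and your $\widetilde T$ trick (with $\sigma(\widetilde T)=\sigma$) applies directly. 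The paper's own minimality arguments in the proofs of Theorems~\ref{main} and~\ref{main2} skate over the analogous issue.
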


\bigskip
\noindent
{\bf Acknowledgements.}
This project was supported in part by the Ministry of Science
and Technology (Taiwan) under grant 104-2115-M-002-006-MY2.
The authors thank Tao Wang for extensive discussion and providing many useful comments.


\end{document}